\newtheorem{thm}{Theorem}[section]
\newtheorem{pro}[thm]{Proposition}
\newtheorem{lem}[thm]{Lemma}
\newtheorem{cor}[thm]{Corollary}
\newtheorem{defn}[thm]{Definition}
\newtheorem*{rem*}{Remarks}
\newtheorem{rems}[thm]{Remark}
\newtheorem*{conj*}{Conjecture}
\DeclareMathOperator{\A}{\mathbb{A}}
\DeclareMathOperator{\C}{\mathbb{C}}
\DeclareMathOperator{\F}{\mathbb{F}}
\DeclareMathOperator{\Q}{\mathbb{Q}}
\DeclareMathOperator{\R}{\mathbb{R}}
\DeclareMathOperator{\Z}{\mathbb{Z}}
\DeclareMathOperator{\Hom}{Hom}
\DeclareMathOperator{\im}{Im}
\DeclareMathOperator{\re}{Re}
\newcommand{\mrm}{\mathrm}
\newcommand{\mbb}{\mathbb}
\newcommand{\g}{\mathrm}
\newcommand{\GSp}{\mathrm{GSp}(4)}
\title{Endoscopic congruences modulo adjoint $L$-values for $\mrm{GSp}(4)$}
\author{Francesco Lemma}
\address{Univ Paris Diderot, Institut math\'ematique de Jussieu-Paris Rive Gauche, UMR 7586, B\^atiment Sophie Germain, Case 7012, 75205 Paris Cedex 13, France.}
\email{francesco.lemma@imj-prg.fr}
\author{Tadashi Ochiai}
\address{Graduate School of Science, Osaka University, 1-1 Machikaneyama-cho, Toyonaka,
Osaka 560-0043, Japan}
\email{ochiai@math.sci.osaka-u.ac.jp}
\begin{document}

\begin{abstract} We establish the existence of congruences between a fixed endoscopic, globally generic, cuspidal automorphic representation $\Pi$ of $\GSp$ of square-free conductor and stable cuspidal automorphic representations of the same weight modulo certain prime factors of the value at $1$ of the adjoint $L$-function of $\Pi$ normalized by a suitable period. 
\end{abstract}

\maketitle

\tableofcontents

\section{Introduction}

In modern number theory, the study of congruences modulo a prime number $p$ between automorphic forms plays a central role. It gives a basis of the theory of $p$-adic families of automorphic forms and the theory of $p$-adic families of $p$-adic Galois representations and it also gives us a deeper understanding of some important arithmetic objects. For example, since the prime number $691$ divides  the numerator of the rational number $\frac{\zeta(12)}{\pi^{12}}$, it divides the constant term of the unique normalized Eisenstein series $E_{12}$ of weight $12$. This 
yields a congruence modulo $691$ between $E_{12}$ and the Ramanujan cuspform $\Delta$ of weight $12$ and the existence of such a congruence helps us to understand the action of $\mrm{Gal}(\Q (\mu_{691})/\Q )$ on the $691$-part of the class group of 
$\Q (\mu_{691})$ (see \cite{ribet}).\\

Instead of the Eisenstein series $E_{12}$, we consider an automorphic representation $\Pi$ of $\mrm{GSp}(4, \A)$, where $\A$ denotes the ring of adeles of $\Q$, which is cuspidal but whose functorial lift to $\mrm{GL}(4, \A)$ is not cuspidal. Such an automorphic representation $\Pi$ is called endoscopic. In this article, we investigate congruences between $\Pi$ and cuspidal automorphic representation $\Pi'$ of $\mrm{GSp}(4, \A)$ whose functorial lift to $\mrm{GL}(4, \A)$ remains cuspidal. Such automorphic representations $\Pi'$ are called stable.\\

In \cite{hida}, Hida considers a holomorphic cuspform $f$ of $\mrm{GL}(2, \A)$ and establishes the existence of congruences between $f$ and other cuspforms of the same level and weight as $f$ modulo certain prime numbers dividing a value of the adjoint $L$-function of $f$, normalized by a suitable period. Let us recall Hida's theorem. 
For a primitive cuspform $f \in S_{\kappa}(\Gamma_1(M))$, $Z(\kappa, f)$ denotes the product
$$
Z(\kappa, f)=\prod_{\sigma: E \rightarrow \C} L(\kappa, \mrm{Sym}^2(\,^{\sigma}\!f))=\prod_{\sigma: E \rightarrow \C} L(1, \pi(^{\sigma}\!f), \mathrm{Ad})
$$
where $E$ denotes the number field generated by the Fourier coefficients of $f$, $\pi(^{\sigma}\!f)$ denotes the cuspidal automorphic representation of $\mrm{GL}(2, \A)$ attached to $^{\sigma}\! f$ and $L(s, \pi(^{\sigma}\!f),\mathrm{Ad})$ denotes the imprimitive adjoint $L$-function attached to $^{\sigma\!}f$. 
Let $c(f)$ denote the real number defined as 
\begin{equation} 
c(f)= \varepsilon(M)^{\rho} ((\kappa-1)!\cdot2^{-(\kappa+2)})^{\rho}(M \varphi(M))^{\rho} \frac{Z(\kappa, f)}{u(f) \pi^{(\kappa+1)\rho}}
\end{equation}
where $\rho$ denotes $[E:\Q]$, where $\varepsilon(M)=12, 4$ or $1$ according as $M=1,2$ or $\geq 3$ and where $u(f)$ denotes the period defined in \cite[($6.6_c$)]{hida}. It follows from \cite[Theorem 6.2]{hida} that $c(f)^2 \in \Z\backslash\{0\}$.  

The following theorem is a particular case of the main theorem of \cite{hida} stated assuming that the Nebentypus of $f$ is trivial. 
\begin{thm}[Hida]\label{theorem:Hida1}
 Let $f \in S_{\kappa}(\Gamma_1(M))$ be a primitive cuspform, with $\kappa \geq 2$ and with trivial Nebentypus. Then, if a prime $p$ such that $p > \kappa-2$ and $p \nmid 6M$ divides $c(f)^2$, there exists a congruence between $f$ and another normalized eigen cuspform $g 
\in  S_{\kappa}(\Gamma_1(M))$ which is not conjugate to $f$ modulo a prime ideal above $p$ in $\overline{\Q}$.
\end{thm}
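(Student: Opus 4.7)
The plan is to interpret $c(f)^2$ as the generator (up to controlled units) of a \emph{congruence ideal} attached to $f$ inside an appropriate Hecke algebra, and then to invoke the general principle that every prime divisor of such a congruence ideal produces a non-trivial congruence between $f$ and some other eigenform of the same level and weight.

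First I would set up the Hecke-algebra side. Let $\mathcal{O}$ be the ring of integers of a sufficiently large number field containing $E$, and let $\mathbb{T} \subset \End_{\C}(S_\kappa(\Gamma_1(M)))$ be the $\mathcal{O}$-subalgebra generated by the Hecke operators $T_n$, $n\geq 1$. The form $f$ yields an $\mathcal{O}$-algebra homomorphism $\lambda_f : \mathbb{T} \to \mathcal{O}$; for a prime $\mathfrak{p}$ of $\mathcal{O}$ above $p$, set $\mathfrak{m} = \lambda_f^{-1}(\mathfrak{p})$ and work in the localization $\mathbb{T}_{\mathfrak{m}}$. Define the congruence ideal
\begin{equation*}
\eta_f \;=\; \lambda_f\bigl(\mathrm{Ann}_{\mathbb{T}_{\mathfrak{m}}}(\ker\lambda_f)\bigr) \;\subset\; \mathcal{O}_{\mathfrak{p}}.
\end{equation*}
A standard commutative-algebra argument (the Wiles--Lenstra characterization) shows that $\mathfrak{p} \mid \eta_f$ if and only if there exists a normalized eigenform $g \in S_\kappa(\Gamma_1(M))$ with $\lambda_g \equiv \lambda_f \pmod{\mathfrak{p}}$ and $g$ not Galois conjugate to $f$, i.e.\ exactly the conclusion of the theorem.

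The substantive step is then to establish the identity
\begin{equation*}
\eta_f \cdot \mathcal{O}_{\mathfrak{p}} \;=\; c(f)^2 \cdot \mathcal{O}_{\mathfrak{p}}
\end{equation*}
up to units, under the hypotheses $p > \kappa - 2$ and $p \nmid 6M$. This combines two inputs. On the automorphic side, Shimura's integral representation of the symmetric square $L$-function yields a formula of the form
\begin{equation*}
\langle f, f \rangle \;=\; (\text{elementary factor in } M,\kappa) \cdot \frac{L(1, \pi(f), \mathrm{Ad})}{\pi^{\kappa+1}}.
\end{equation*}
On the Hecke-algebra side, one shows that pairing the $f$-isotypic projector $e_f \in \mathbb{T}_{\mathfrak{m}} \otimes \overline{\Q}_p$ against $f$ itself identifies a generator of $\eta_f$ with $\langle f, f \rangle$ divided by the integral period $u(f)$. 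Multiplying over the embeddings $\sigma : E \hookrightarrow \C$ and matching constants with the definition of $c(f)^2$ in the excerpt then pins down $\eta_f$ up to primes dividing $6M$ or not exceeding $\kappa-2$.

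The main obstacle is this last identification: one must realize the Petersson inner product as an arithmetic period with integral denominators (hence the precise role of $u(f)$ in \cite{hida}), carry out the Rankin--Selberg unfolding of $L(s, \mathrm{Sym}^2 f)$ with uniform control of every bad Euler factor, and verify that the elementary factor $\varepsilon(M)^{\rho}((\kappa-1)!\cdot 2^{-(\kappa+2)})^{\rho}(M\varphi(M))^{\rho}$ in the definition of $c(f)$ accounts exactly for the discrepancy between the Petersson formula and the Hecke-algebraic one, up to the primes excluded in the statement. The hypotheses $p > \kappa-2$ (to stay within the Fontaine--Laffaille range where $\mathbb{T}_{\mathfrak{m}}$ is reduced with controlled residual representation) and $p \nmid 6M$ (to kill contributions from ramified and small primes) enter precisely at this point. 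Granting this identification, the theorem follows immediately from the congruence-ideal characterization of the first step.
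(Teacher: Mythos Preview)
The paper does not supply its own proof of this statement: Theorem~\ref{theorem:Hida1} is quoted as a particular case of the main theorem of \cite{hida}, with no argument given beyond the citation. There is therefore nothing in the paper to compare your proposal against line by line.

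That said, your sketch is a reasonable outline of the strategy underlying Hida's original result, and in broad strokes it is the ``same approach'' the paper is implicitly invoking. Two remarks are worth making. First, some of your language is anachronistic: the Wiles--Lenstra numerical criterion and Fontaine--Laffaille theory postdate \cite{hida} by more than a decade; Hida's 1981 argument works directly with integral cohomology of modular curves, the Eichler--Shimura isomorphism, and the Petersson pairing, rather than through a congruence ideal in a localized Hecke algebra as you frame it. The condition $p>\kappa-2$ in Hida's paper is there to ensure torsion-freeness of the relevant cohomology and integrality of the coefficient system, not to invoke Fontaine--Laffaille. Second, your identification of $\eta_f$ with $c(f)^2$ up to units is stated a bit too strongly: what Hida actually proves is a divisibility, and the full equality (up to units) requires later refinements. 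For the purposes of the theorem as stated---producing a congruence from a prime divisor of $c(f)^2$---the divisibility direction suffices, so your conclusion is sound even if the intermediate claim is sharper than what is available in \cite{hida} alone.
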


As we announced earlier, we will prove an extension of Theorem \ref{theorem:Hida1} to congruences between a fixed endoscopic, globally generic, cuspidal automorphic representation $\Pi$ of $\GSp$ of square-free conductor and stable cuspidal automorphic representations. 
As we need it in the proof of our main result, we use a variant of Theorem \ref{theorem:Hida1} proved by Ghate \cite{ghate}. 
\par 
Let $\Gamma(s, \mathrm{Ad}(f))$ denote the $\Gamma$-factor of $L(s, \mathrm{Ad}(f))$ and let $W(f)$ be the complex constant that arises in the functional 
equation of the standard $L$-function attached to $f$ (see \cite{ghate} for the precise definitions).

\begin{thm}[Ghate]\label{theorem:Hida2}
 Let $f \in S_{\kappa}(\Gamma_1(M))$ be a primitive cuspform, with $\kappa \geq 2$ and with trivial Nebentypus. 
Let $K$ denote the number field which appears in \cite[Theorem 1]{ghate}. 
Let us fix a prime $\mathfrak{p}$ of $K$ over a prime $p>\kappa-2$ and such that $p \nmid 6M$ and let us denote by $\mathcal{O}_{(\mathfrak{p})}$ 
the ring of integers of $K$ localized at $\mathfrak{p}$. Let $c'(f) \in \mathcal{O}_{(\mathfrak{p})}$ be as follows  
\begin{equation} 
c'(f)= W(f) \Gamma (1,\mathrm{Ad}(f))\frac{L(1, \pi (f) ,\mathrm{Ad})}{\omega(f,+) \omega(f,-)}
\end{equation}
where $\omega(f,+), \omega(f,-) \in \mathbb{C}^\times / \mathcal{O}^\times_{(\mathfrak{p})}$ are complex periods 
given in \cite{ghate}. Then, if $\mathfrak{p}$ divides $c'(f)$, there exists a congruence between $f$ and another normalized eigen cuspform $g 
\in  S_{\kappa}(\Gamma_1(M))$ different from $f$ modulo $\mathfrak{p}$.
\end{thm}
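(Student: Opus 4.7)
The plan is to deduce Ghate's statement from Hida's Theorem \ref{theorem:Hida1} by matching the two normalizations $c(f)$ and $c'(f)$ of the adjoint $L$-value $L(1,\pi(f),\mathrm{Ad})$ up to a $\mathfrak{p}$-adic unit. Concretely, one wants to show that divisibility of $c'(f)$ by $\mathfrak{p}$ forces divisibility, by $\mathfrak{p}$, of the $\sigma$-factor of $c(f)^2$ attached to the embedding $\sigma : E \hookrightarrow \oline{\Q}_p$ cut out by $\mathfrak{p}$; Hida's theorem will then produce the desired congruence, and fixing $\sigma$ via $\mathfrak{p}$ refines ``not conjugate to $f$'' to the sharper ``different from $f$'' (once $g$ is chosen inside the finite Galois orbit over $K$).

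The first step is routine bookkeeping. Under the hypotheses $p > \kappa-2$ and $p \nmid 6M$, all of the elementary correction factors appearing in Hida's formula for $c(f)$ are $\mathfrak{p}$-adic units: the factorials $(\kappa-1)!$ by $p > \kappa-2$, the constants $\varepsilon(M)$, $M$, $\varphi(M)$, and the power of $2$ by $p \nmid 6M$. The same hypotheses ensure that the explicit $\Gamma$-values packaged in $\Gamma(1,\mathrm{Ad}(f))$ are, after pulling out the appropriate powers of $\pi$, elements of $\mathcal{O}_{(\mathfrak{p})}^{\times}$; and the global root number $W(f)$, being an algebraic number of complex absolute value one whose denominators are controlled by the conductor of the standard $L$-function, likewise lies in $\mathcal{O}_{(\mathfrak{p})}^{\times}$.

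The genuine content of the theorem lies in the comparison of the Petersson-type period $u(f)$ of \cite{hida} with the product $\omega(f,+)\omega(f,-)$ of cohomological periods defined in \cite{ghate} through the $\pm$-eigenspaces of complex conjugation on the modular symbols of $f$. This is exactly what \cite{ghate} establishes: the two periods coincide, in $\C^{\times}/\mathcal{O}_{(\mathfrak{p})}^{\times}$, at the fixed embedding $\sigma$. Granting this identification, divisibility of $c'(f)$ by $\mathfrak{p}$ translates into divisibility of the $\sigma$-component of Hida's $Z(\kappa,f)/u(f)$ by $\mathfrak{p}$, hence into divisibility of $c(f)^{2}$ by a prime of $\oline{\Q}$ above $p$, at which point Theorem \ref{theorem:Hida1} gives a normalized eigenform $g \in S_{\kappa}(\Gamma_{1}(M))$, necessarily distinct from $f$ after pinning down the embedding, with $f \equiv g \pmod{\mathfrak{p}}$. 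The main obstacle is the period comparison step; this is the substantive arithmetic input, supplied by the Eichler--Shimura isomorphism together with the integrality results of \cite{ghate}.
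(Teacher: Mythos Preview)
The paper does not prove this statement; it is quoted verbatim as a theorem of Ghate and attributed to \cite{ghate}, with no proof supplied. So there is no ``paper's own proof'' to compare your proposal against.

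That said, your reduction to Hida's Theorem~\ref{theorem:Hida1} has a genuine gap. Hida's theorem takes as input the divisibility of the \emph{rational integer} $c(f)^2$ by the \emph{rational prime} $p$, and outputs a congruence modulo \emph{some} prime $\mathfrak{P}$ of $\overline{\Q}$ above $p$. Your argument shows that $\mathfrak{p}\mid c'(f)$ implies $p\mid c(f)^2$ (after taking norms over the embeddings), but Hida's conclusion then only furnishes $f\equiv g\pmod{\mathfrak{P}}$ for an unspecified $\mathfrak{P}\mid p$, not for the particular $\mathfrak{p}$ you started with. The sentence ``fixing $\sigma$ via $\mathfrak{p}$ refines\ldots'' does not repair this: once you pass to the norm $c(f)^2\in\Z$ you have lost the information of which prime above $p$ is involved, and Hida's statement gives you no control over it.

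The refinement Ghate actually proves in \cite{ghate} is obtained not by reducing to Hida but by rerunning Hida's lattice-theoretic congruence criterion directly over $\mathcal{O}_{(\mathfrak{p})}$: one works with the $\pm$-eigenspaces of the integral parabolic (or Betti) cohomology localized at $\mathfrak{p}$, defines the periods $\omega(f,\pm)$ so that the discriminant of the restricted Poincar\'e pairing equals $c'(f)$ up to a unit in $\mathcal{O}_{(\mathfrak{p})}^\times$, and then applies the same ``discriminant divisible $\Rightarrow$ lattice index nontrivial $\Rightarrow$ congruence'' mechanism at $\mathfrak{p}$ itself. That is why the conclusion lands at the chosen prime $\mathfrak{p}$ and why the period comparison you invoke is in fact the definition of Ghate's periods rather than a separate lemma.
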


To state our result, let us consider a cuspidal automorphic representation $\Pi \simeq \Pi_\infty \otimes \Pi_f$ which satisfies the conditions (i)-(v) of Section \ref{after_ichino}. In particular $\Pi_\infty$ is a generic discrete series of $\mrm{GSp}(4, \R)$ of Harish-Chandra parameter $(k+3, k'+1)$ for two integers $k \geq k' \geq 0$ and $\Pi$ has square-free paramodular conductor $N$. The conditions on $\Pi$ imply that $\Pi_f$ is defined over its rationality field, which is a number field $E(\Pi_f)$ of degree $r$ over $\Q$. Let us fix a prime number $p$ and assume that $p \notin S_{N,3} \cup S_{\mathrm{weight}} \cup S_{\mathrm{tors}} \cup S'_{\mathrm{tors}}$ where 
$S_{N,3}$, $S_{\mathrm{tors}}$, $S'_{\mathrm{tors}}$, $S_{\mathrm{weight}}$ are defined by \eqref{s_G}, \eqref{s-tors}, \eqref{s'-tors} and \eqref{s-weight} respectively. Considering the Betti cohomology of Siegel threefolds, we define a  free $\Z_{(p)}$-module $L(\Pi_f, V_{\lambda, \Z_{(p)}})$ of finite rank $2r$ endowed with a natural bilinear form
$$
\langle \,,\,\rangle: L(\Pi_f, V_{\lambda, \Z_{(p)}}) \times L(\Pi_f, V_{\lambda, \Z_{(p)}}) \rightarrow \Z_{(p)}
$$
and a period $\Omega(\Pi_f) \in \R^\times/\Z_{(p)}^\times$ (see equation \eqref{period} and Remark \ref{remark-period}). Let $\delta_1, \ldots, \delta_{2r}$ be a $\Z_{(p)}$-basis of $L(\Pi_f, V_{\lambda, \Z_{(p)}})$ and let $d(\Pi_f)= \det ( \langle \delta_i, \delta_j \rangle )_{1 \leq i, j \leq 2r}$ be the discriminant of $\langle \,,\,\rangle$. This is an element of $\Z_{(p)}$ whose image in $\Z_{(p)}/(\Z_{(p)}^\times)^2$ does not depend on the choice of the basis $\delta_1, \ldots, \delta_{2r}$. Let $C_{k,k'}
= \dfrac{(-1)^{k+k'}(k+k' +4)! (k+k' +5)!}{3^3 \cdot 5}$ and $C_N =\prod_{l\vert N} (l+l^{-1})^{-1}(l^2 +1)^{-1}$. For $x, y \in \R$ we write $x \sim y$ if there exists $s \in (\Z_{(p)}^\times)^2$ such that $x=sy$.

\begin{thm}[Theorem \ref{main1}] \label{main11} 
Let $\Pi \simeq \Pi_\infty \otimes \Pi_f$ be a cuspidal automorphic representation given above.
Then we have: 
$$
d(\Pi_f) \sim  \left( \left( \frac{2^{k+k'+13}C_{k,k'}C_N\pi^{3k+k'+12}}{k+k'+5} \right)^r \Omega(\Pi_f)^{-1} \prod_{\sigma: E(\Pi_f) \rightarrow \C} L(1, \,\!^\sigma \Pi, \mathrm{Ad}) \right)^2 .
$$
\end{thm}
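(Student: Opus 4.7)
The shape of the right-hand side --- an $r$-th power of a single archimedean-and-$N$-dependent constant, times $\Omega(\Pi_f)^{-1} \prod_\sigma L(1, {}^\sigma\Pi, \mathrm{Ad})$, all squared --- matches the rank $2r$ of $L(\Pi_f, V_{\lambda, \Z_{(p)}})$ and suggests that the Gram matrix of $\langle\,,\,\rangle$ in a well-chosen basis is block-antidiagonal with $r$ blocks of size $2 \times 2$, one per embedding $\sigma \colon E(\Pi_f) \hookrightarrow \C$. The plan is to (i) construct such a basis from the Hodge decomposition associated with $\Pi_\infty$, (ii) express each antidiagonal entry as a Petersson inner product of automorphic vectors in ${}^\sigma\Pi$ times the period $\Omega_\sigma$, and (iii) evaluate that inner product by an integral representation of the adjoint $L$-function.

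\textbf{First I would} exploit the fact that $\Pi_\infty$ is a generic discrete series of Harish-Chandra parameter $(k+3, k'+1)$, whose $(\mathfrak{g}, K)$-cohomology contributes to the middle-degree Betti cohomology of the Siegel threefold in the two Hodge bidegrees $(2,1)$ and $(1,2)$. Intersecting with the $\Pi_f$-isotypic component produces, for each $\sigma$, classes $\delta_\sigma^+$ and $\delta_\sigma^-$ spanning the $\sigma$-component of $L(\Pi_f, V_{\lambda, \Z_{(p)}}) \otimes_{\Z_{(p)}} \C$. Because Poincar\'e duality annihilates each Hodge piece against itself, the Gram matrix is block-antidiagonal, and
$$
d(\Pi_f) \sim \prod_{\sigma \colon E(\Pi_f) \hookrightarrow \C} \langle \delta_\sigma^+, \delta_\sigma^-\rangle^2 \quad \text{in } \Z_{(p)}/(\Z_{(p)}^\times)^2.
$$

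\textbf{Next}, I would represent each $\delta_\sigma^\pm$ by a $V_{\lambda,\C}$-valued harmonic form built from a vector $\varphi_\sigma^\pm \in {}^\sigma\Pi$, with $\Omega(\Pi_f) = \prod_\sigma \Omega_\sigma$ being precisely the transition factor, modulo $\Z_{(p)}^\times$, from the $\Z_{(p)}$-rational Betti realization to the automorphic one (cf.\ equation \eqref{period} and Remark \ref{remark-period}). Converting Poincar\'e duality via the Hodge $\ast$-operator turns $\langle \delta_\sigma^+, \delta_\sigma^-\rangle$ into $\Omega_\sigma^{-1}$ times a Petersson inner product times an explicit archimedean constant depending on $(k, k')$. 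I would then apply the integral representation of $L(s, \Pi, \mathrm{Ad})$ available for globally generic $\mathrm{GSp}(4)$-representations (a Rankin--Selberg or doubling construction) at $s = 1$: since $N$ is square-free and each $\Pi_l$ paramodular, the local non-archimedean integrals at primes $l \mid N$ are computable with explicit test vectors and assemble into $C_N$; the archimedean integral combines with the earlier archimedean factor to produce the global constant $2^{k+k'+13} C_{k,k'} \pi^{3k+k'+12}/(k+k'+5)$; and the product of unramified factors is $L(1, {}^\sigma\Pi, \mathrm{Ad})$. Squaring and multiplying over $\sigma$ yields the formula.

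\textbf{The hardest step} will be the archimedean local computation: because $\Pi_\infty$ is a non-holomorphic generic discrete series, its Whittaker and matrix-coefficient integrals are much more intricate than in Hida's $\mathrm{GL}(2)$ case, and pinning down every power of $2$, $\pi$ and the exact dependence on $(k, k')$ requires a careful comparison of the cohomological, Whittaker and Petersson normalizations at infinity. A secondary but non-trivial obstacle is verifying that all intermediate factors --- including volume terms, the level-$N$ local integrals, and the comparison isomorphism between Betti cohomology and automorphic forms --- lie in $\Z_{(p)}^\times$ under the hypothesis $p \notin S_{N,3} \cup S_{\mathrm{weight}} \cup S_{\mathrm{tors}} \cup S'_{\mathrm{tors}}$, so that the resulting identity is genuinely valid in $\Z_{(p)}/(\Z_{(p)}^\times)^2$.
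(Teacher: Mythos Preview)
Your proposal matches the paper's strategy at every structural level: block structure of the Gram matrix from the Hodge splitting into $(2,1)$ and $(1,2)$ pieces for each $\sigma$, change-of-basis to absorb $\Omega(\Pi_f)$, Poincar\'e pairing reduced to a Petersson norm, and Petersson norm related to $L(1,{}^\sigma\Pi,\mathrm{Ad})$.

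Two points of execution differ and are worth flagging. First, the last step is not recomputed via an integral representation: the paper invokes Chen--Ichino (Theorem~\ref{ichino-main}) as a black box, so the ramified local factors at $l\mid N$ and the archimedean zeta integral you anticipate are already packaged there; this is where the factor $2^{k+k'+13}\pi^{3k+k'+9}(k+k'+5)^{-1}\prod_{l\mid N}(l+l^{-1})^{-1}$ comes from. Second, what you describe as ``converting Poincar\'e duality via the Hodge $\ast$-operator'' is in the paper the long explicit calculation of Proposition~\ref{pd-computation}: the wedge $[\phi,v]\wedge\overline{[\phi,v]}$ is evaluated on the generator $\mathbf{1}\in\bigwedge^6(\mathfrak{g}_\C/\mathfrak{k}'_\C)$ by decomposing $\bigwedge^2\mathfrak{p}^+\otimes\mathfrak{p}^-$ into $K_\infty$-types (Lemmas~\ref{std-basis} and~\ref{projection-matrix}) and applying the pairing identities of Lemma~\ref{pairing-computation}; this produces the constant $C_{k,k'}$ and the remaining $\pi^3\prod_{l\mid N}(l^2+1)^{-1}$ (the latter from Lemma~\ref{std-tamagawa} and the paramodular volume). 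A minor point: the paper works throughout with the \emph{real} basis $(\omega_i)_{1\le i\le 2r}$ of Lemma~\ref{norm-basis} rather than complex Hodge classes $\delta_\sigma^\pm$, and $\Omega(\Pi_f)$ is by definition the single transition determinant from the integral basis $(\delta_i)$ to $(\omega_i)$, so $d(\Pi_f)\sim\Omega(\Pi_f)^{-2}\det S$ is immediate and no factorization $\Omega(\Pi_f)=\prod_\sigma\Omega_\sigma$ is needed or claimed.
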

We shall prove Theorem \ref{main11} at the end of Section \ref{section:Discriminant and adjoint $L$-values}. 
A direct consequence of Theorem \ref{main11} is that the real number 
$$
\left( \left( \frac{2^{k+k'+13}C_{k,k'}C_N\pi^{3k+k'+12}}{k+k'+5} \right)^r \Omega(\Pi_f)^{-1} \prod_{\sigma: E(\Pi_f) \rightarrow \C} L(1, \,\!^\sigma \Pi, \mathrm{Ad}) \right)^2
$$
is in fact a non-zero rational number which is a $p$-integer. Moreover, the assumption that $\Pi$ is endoscopic implies that it is obtained as a Weil lifting of two primitive classical elliptic modular forms $f_1$ and $f_2$ of weights $k_1=k+k'+4$ and $k_2=k-k'+2$ and level $N_1$ and $N_2$, with $N_1N_2=N$. Given a prime ideal $\mathfrak{P}$ above $p$ in $\overline{\Q}$ and another cuspidal automorphic representation $\Pi'$ of $\mrm{GSp}(4, \A)$, we write $\Pi' \equiv \Pi \pmod{\mathfrak{P}}$ if $\Pi'$ is congruent to $\Pi$ modulo $\mathfrak{P}$ (see Definition \ref{congruence-def}).  

\begin{thm}[Theorem \ref{main}] \label{main12} 

In addition to the setting given above, assume further that $p$ is prime to $N$ and is outside the finite set $S''_{\mathrm{tors}}$ which is defined by \eqref{s''-tors}. Assume the following conditions:
\begin{enumerate}
\item[\rm{(a)}] the residual $\mrm{Gal}(\overline{\Q}/\Q)$-representations $\overline{\rho}_{f_1}$ and $\overline{\rho}_{f_2}$ of $f_1$ and $f_2$ are irreducible,
\item[\rm{(b)}] the prime $\mathfrak{p}$ does not divide $c'(f_1)$ nor $c'(f_2)$ for any prime $\mathfrak{p}$ above $p$ in $\overline{\mathbb{Q}}$,   
\item[\rm{(c)}] the prime $p$ divides $$
\left( \left( \frac{2^{k+k'+13}C_{k,k'}C_N\pi^{3k+k'+12}}{k+k'+5} \right)^r \Omega(\Pi_f)^{-1} \prod_{\sigma: E(\Pi_f) \rightarrow \C} L(1, \,\!^\sigma \Pi, \mathrm{Ad}) \right)^2, 
$$
\end{enumerate}
Then, there exists a prime divisor $\mathfrak{P}$ of $p$ in $\overline{\Q}$ and a cuspidal representation $\Pi' \simeq \bigotimes_v' \Pi'_v$ of $G(\A)$ such that 
\begin{enumerate}
\item[\rm{(1)}] the non archimedean part $\Pi'_f$ of $\Pi'$ satisfies $(\Pi'_f)^{K_N} \neq 0$,
\item[\rm{(2)}] the representation $\Pi'_\infty$ is a discrete series with the same parameter as $\Pi_\infty$,
\item[\rm{(3)}] the cuspidal representation $\Pi'$ is stable,
\item[\rm{(4)}] we have $\Pi' \not\simeq \,^\sigma\!\,\Pi$ for all $\sigma \in \mrm{Aut}(\C)$,
\item[\rm{(5)}] we have $\Pi' \equiv \Pi \pmod{\mathfrak{P}}$.
\end{enumerate}
\end{thm}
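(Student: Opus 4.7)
The plan is to combine Theorem \ref{main11}, a lattice-theoretic congruence argument \`a la Hida, and Ghate's Theorem \ref{theorem:Hida2}. The argument falls naturally into three stages: extract a mod-$p$ congruence from divisibility of the discriminant $d(\Pi_f)$, lift this congruence to a genuine cuspidal automorphic representation $\Pi'$, and exclude the possibility that $\Pi'$ is itself endoscopic.

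By Theorem \ref{main11}, condition (c) forces $p \mid d(\Pi_f)$, so the Poincar\'e pairing $\langle \,,\,\rangle$ on the $\Z_{(p)}$-lattice $L(\Pi_f, V_{\lambda, \Z_{(p)}})$ is degenerate modulo $p$. Since $L(\Pi_f, V_{\lambda, \Z_{(p)}})$ realizes the $\mrm{Aut}(\C)$-orbit isotypic component of $\Pi_f$ inside the integral Betti cohomology of the Siegel threefold of level $K_N$, and the ambient pairing is perfect for $p \notin S''_{\mrm{tors}}$, a standard congruence-module argument produces a prime $\mathfrak{P}$ of $\oline{\Q}$ above $p$ together with a Hecke eigensystem outside the $\mrm{Aut}(\C)$-orbit of $\Pi_f$ that is congruent to $\Pi_f$ modulo $\mathfrak{P}$. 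Applying the Deligne-Serre lemma, and using that the infinitesimal character is preserved by the Galois action, this eigensystem lifts to a cuspidal automorphic representation $\Pi' \simeq \Pi'_\infty \otimes \Pi'_f$ of $G(\A)$ with $(\Pi'_f)^{K_N} \neq 0$, with $\Pi'_\infty$ a discrete series of the same Harish-Chandra parameter $(k+3, k'+1)$ as $\Pi_\infty$, with $\Pi' \not\simeq \,^\sigma\!\Pi$ for every $\sigma \in \mrm{Aut}(\C)$, and with $\Pi' \equiv \Pi \pmod{\mathfrak{P}}$. This secures conditions (1), (2), (4) and (5).

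It remains to show that $\Pi'$ is stable. Suppose not; then $\Pi' = \pi(g_1, g_2)$ is the Weil lifting of a pair of primitive cuspforms $(g_1, g_2)$ of weights $(k+k'+4, k-k'+2)$ and levels $(N'_1, N'_2)$ with $N'_1 N'_2 = N$. Comparing the Satake parameters of $\Pi = \pi(f_1, f_2)$ and of $\Pi' = \pi(g_1, g_2)$ at unramified primes $l \nmid Np$, and using that the two weights are distinct (since $k_1 - k_2 = 2k'+2 \geq 2$) so no transposition of the two $\mathrm{GL}_2$-factors is possible, the congruence $\Pi' \equiv \Pi \pmod{\mathfrak{P}}$ descends to $g_i \equiv f_i \pmod{\mathfrak{P}}$ for $i = 1, 2$. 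Since $\Pi' \not\simeq \,^\sigma\!\Pi$ for any $\sigma$, at least one of the $g_i$ is a genuinely distinct Hecke eigensystem from the $\mrm{Aut}(\C)$-orbit of $f_i$. The Hida-Ghate identification of $c'(f_i)$ with the congruence number of $f_i$---valid for primes $\mathfrak{p}$ above $p$ under the residual irreducibility in condition (a)---then forces $\mathfrak{p} \mid c'(f_i)$, contradicting condition (b). Hence $\Pi'$ is stable, establishing (3).

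The principal obstacle is the last step: unambiguously recovering, modulo $p$, the two elliptic modular forms constituting an endoscopic $\GSp$ representation. This rests on the distinctness of the weights $k_1 > k_2$, on condition (a) to rule out non-conjugate but spuriously congruent companions \`a la Ribet, and on the converse to Theorem \ref{theorem:Hida2} identifying $c'(f_i)$ with the full congruence number of $f_i$. A secondary technicality in the first step is that perfectness of the Poincar\'e pairing on integral cohomology requires the cohomology to be $p$-torsion free, which is precisely the role of the set $S''_{\mrm{tors}}$.
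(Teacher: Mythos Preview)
Your outline is broadly on target and mirrors the paper's strategy, but two genuine gaps appear in the final ``exclude endoscopy'' step.

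First, you assume that if $\Pi'$ is not stable then it is automatically a Yoshida (Weil) lift $\pi(g_1,g_2)$. In the Arthur classification there are also the CAP types (Saito--Kurokawa, Howe--Piatetski-Shapiro, one-dimensional). The paper rules these out separately: their associated mod~$\mathfrak{P}$ Galois representations contain a character as a constituent, which is incompatible with $\overline{\rho}_\Pi \simeq \overline{\rho}_{f_1}\oplus\overline{\rho}_{f_2}(-1-k')$ having both pieces irreducible by hypothesis~(a). You should say this.

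Second, and more seriously, you write $N'_1N'_2=N$, but from $(\Pi'_f)^{K_N}\neq 0$ one only knows $N'_1N'_2\mid N$. Consequently $g_i$ need not have the \emph{same} level as $f_i$, and Ghate's theorem in the form you invoke (congruence at level $N_i$ forces $\mathfrak{p}\mid c'(f_i)$) does not directly apply. The paper handles this by a trichotomy on levels: if $N'_1\mid N_1$ one gets a congruence inside $S_{k_1}(\Gamma_1(N_1))$ and Ghate applies to $f_1$; if $N_1\mid N'_1$ then $N'_2\mid N_2$ and Ghate applies to $f_2$; if neither divides the other, the Artin conductor of $\overline{\rho}_{f_1}$ is a proper divisor of $N_1$, and one invokes Serre's modularity conjecture (Khare--Wintenberger) to produce a genuine congruence at lower level, again contradicting~(b). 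This level-lowering input is essential and missing from your sketch.

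A smaller point: your ``weights are distinct, so no transposition'' argument for matching $g_i$ with $f_i$ mod~$\mathfrak{P}$ is morally right but imprecise. The paper works with Galois representations and excludes the cross case $\overline{\rho}_{f_1}\simeq\overline{\rho}_{f'_2}(-1-k')$ by analysing the restriction to inertia at $p$ (ordinary vs.\ supersingular, fundamental characters), using $p\notin S_{\mathrm{weight}}$. Your Satake-parameter phrasing would need to be translated into exactly this inertial-weight argument to be rigorous.
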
 
We shall prove Theorem \ref{main12} at the end of Section \ref{section:The congruence criterion}. 

Note that by the work of Lan-Suh \cite{lan-suh}, the primes in $S_{\mrm{tors}}$ and $S'_{\mrm{tors}}$ can be shown to be smaller than and explicit bound when the weight is sufficiently regular. On the other hand $S''_{\mrm{tors}}$ is a subset of the prime numbers dividing the torsion in the cohomology of the boundary and hence is more delicate.\\

Before our work, Hida's theorem has been extended to  cuspforms of $\mrm{GL}(2)$ over an imaginary quadratic field by Urban \cite{urban}, to Hilbert modular cuspforms by Ghate \cite{ghate} and Dimitrov \cite{dimitrov}, to cuspforms of $\mrm{GL}(2)$ over an arbitrary number field by Namikawa \cite{namikawa} and to cuspforms of $\mrm{GL}(n)$ over an arbitrary number filed by Balasubramanyam and Raghuram \cite{br}. Congruences between holomorphic Yoshida lifts and stable cusp forms on $\mrm{GSp}(4)$ has also been discussed in \cite{katsurada}, 
\cite{bdsp} and \cite{ak} but the methods of these papers are entirely different from ours.\\

In the proof of Theorem \ref{main11}, the work of Chen-Ichino \cite[Theorem 1.1]{ch19} relating the Petersson norm of a suitably normalized cuspform $\varphi \in \Pi$ to $L(1, \Pi, \mathrm{Ad})$ plays an important role (\cite{ch19} is a refinement of an unpublished preprint \cite{ichino}). The assumptions (i)-(iv) on $\Pi$ stated above are all already present as hypothesis of  loc. cit. Theorem 1.1, which also covers the case where $\Pi$ is stable. 
Assumption (v) says that $\Pi$ is endoscopic. 
\par 
One of our important contributions for Theorem \ref{main11} is to compute the discriminant $d(\Pi_f)$ in terms of the explicit constant $C_{k,k'}$, the period $\Omega(\Pi_f)$ and the Petersson norms of the normalized cuspforms $^\sigma \!\,\varphi \in\, ^\sigma \!\,\Pi$. Note that in \cite{br}, the rational number analogous to $C_{k,k'}$ is not explicitly computed as in the present work. It seems difficult to perform a similar explicit computation for higher rank reductive groups. Furthermore, let us emphasize that this work is the first application of Hida's ideas to a reductive group different from $\mrm{GL}(n)$ and in particular to the study of endoscopic congruences. Moreover this is the first work considering endoscopic congruences for generic, hence non-holomorphic, cuspforms on $\mrm{GSp}(4)$.\\

\textbf{Aknowledgments.} The first-named author aknowledges support of the ANR Formules des Traces Relatives, P\'eriodes, Fonctions $L$ et Analyse Harmonique [ANR-13-BS01-0012]. The second-named author is supported by KAKENHI (Fostering Joint International Research: Garant Number 16KK0100, 
Fund for the Promotion of Joint International Research and 
Grant-in-Aid for Scientic Research (B): Grant Number 26287005) 
of JSPS. This work progressed during the stay of the second-named author 
at IMJ-PRG and Universit\'e Paris 7 in the year 2017-2018. The second-named 
expresses his gratitude to IMJ-PRG and Universit\'e Paris 7 for their hospitality. We would like also to thank Atsushi Ichino, Giovanni Rosso and Jacques Tilouine for encouragements and feedback.

\section{Notation and conventions}

\subsection{The algebraic group $\GSp$ and its algebraic representations} \label{introGSp} Let $I_2$ be the identity matrix of size two and let $\mathcal{J}$ be the symplectic form whose matrix in the canonical basis of $\Z^4$ is
$$
\mathcal{J}=
\begin{pmatrix}
 & I_2\\
-I_2 & \\
\end{pmatrix}.
$$
The symplectic group $\mathrm{GSp}(4)$ is defined as
$$
\mrm{GSp}(4)=\{g \in \mathrm{GL}(4) \,|\, ^t\!g \mathcal{J} g = \nu(g) \mathcal{J}, \, \nu(g) \in \mathbb{G}_m\}.
$$
This is  a $\Z$-group scheme that we will denote by $G$ in what follows. Then $\nu: G \rightarrow \mathbb{G}_m$ is a character and the derived group of $G$ is $\mathrm{Sp}(4)= \mathrm{Ker} \, \nu$. We denote by $T \subset G$ the diagonal maximal torus defined as
$$
T=\{ \mathrm{diag}(\alpha_1, \alpha_2, \alpha_1^{-1} \nu, \alpha_2^{-1} \nu)| \, \alpha_1, \alpha_2, \nu \in \mathbb{G}_m \}
$$
and by $B=TU$ the standard Borel subgroup of upper triangular matrices in $G$ where $U$ is the unipotent radical
$$
U=\left\{ \begin{pmatrix}
 1 & x_0 & &\\
   &  1  & &\\
   &   & 1& \\
   &   & -x_0 & 1 \\
\end{pmatrix}\begin{pmatrix}
 1 &  & x_1 & x_2\\
 &  1 & x_2 & x_3 \\
   &   &1 & \\
   &   &  &1 \\
\end{pmatrix}\, \middle| \, x_0, x_1, x_2, x_3 \in \mathbb{G}_a \right\}
$$
We identify the group $X^*(T)$ of algebraic characters of $T$ to the subgroup of $\mathbb{Z}^2 \oplus \mathbb{Z}$ of triples $(k, k', c)$ such that $k+k' \equiv c \pmod 2$ via
$$
\lambda(k, k', c): \mathrm{diag}(\alpha_1, \alpha_2, \alpha_1^{-1} \nu, \alpha_2^{-1} \nu) \longmapsto \alpha_1^k \alpha_2^{k'} \nu^{\frac{c-k-k'}{2}}.
$$
Let $\rho_1=\lambda(1, -1, 0)$ be the short simple root and $\rho_2=\lambda(0, 2, 0)$ be the long simple root. Then the set $R \subset X^*(T)$ of roots of $T$ in $G$ is
$$
R=\{ \pm \rho_1, \pm \rho_2, \pm (\rho_1 + \rho_2), \pm (2 \rho_1+\rho_2)\}
$$
and the subset $R^+ \subset R$ of positive roots with respect to $B$ is
$$
R^+=\{ \rho_1, \rho_2, \rho_1+ \rho_2, 2 \rho_1+ \rho_2 \}.
$$
Then, the set of dominant weights is the set of $\lambda(k, k', c)$ such that $k \geq k' \geq 0$. For any dominant weight $\lambda$, there is an irreducible algebraic representation $V_{\lambda, \Q}$ of $G_{\Q}$ in a finite dimensional $\Q$-vector space, of highest weight $\lambda$, unique up to isomorphism, and all isomorphism classes of irreducible algebraic representations of $G_{\Q}$ are obtained in this way. We will be interested in the specific realization of $V_{\lambda, \Q}$ obtained by Weyl's construction as follows. Let $\mathrm{Std}_{\Q} \simeq V_{\lambda(1,0,1), \Q}$ be the standard 4-dimensional representation of $G_{\Q}$, let $\lambda=\lambda(k,k',c)$ be a dominant weight, let $d=k+k'$. If $d \geq 2$, for two integers $1 \leq p, q \leq d$, let $\Psi_{p,q}: \mathrm{Std}_{\Q}^{\otimes d} \rightarrow \mathrm{Std}_{\Q}^{\otimes(d-2)}$ denote the map defined by
\begin{equation} \label{psipq}
\Psi_{p,q}: v_1 \otimes \ldots \otimes v_d \mapsto \, ^t\!v_p \mathcal{J} v_q v_1 \otimes \ldots \otimes \hat{v}_p \otimes \ldots \otimes \hat{v}_q \otimes \ldots \otimes v_d
\end{equation}
Let $\mathrm{Std}_{\Q}^{\langle d \rangle} \subset \mathrm{Std}_{\Q}^{\otimes d}$ be the intersection of the kernels of all the $\Psi_{p,q}$. Let $t=\frac{c-k-k'}{2}$. Then 
$$
V_{\lambda, \Q} = \left( \mathrm{Std}_{\Q}^{\langle d \rangle} \cap \mbb{S}_\lambda(\mathrm{Std}_{\Q}) \right) \otimes \nu^{\otimes t}
$$ 
where $\mbb{S}_\lambda(\mathrm{Std}_{\Q}) \subset \mathrm{Std}_{\Q}^{\otimes d}$ is the image of an explicit element $c_\lambda \in \Z[\mathcal{S}_d]$ acting on $\mathrm{Std}_{\Q}^{\otimes d}$ (see \cite[Theorem 17.11]{fulton-harris}). Here $\mathcal{S}_d$ denotes the symmetric group of order $d!$. By replacing $\mathrm{Std}_{\Q}$ by the standard representation $\mathrm{Std}_{\Z}$ of $G$ over $\Z$, we obtain a free $\Z$-module $V_{\lambda, \Z}$ endowed with a linear action of $G$ such that $V_{\lambda, \Z} \otimes \Q = V_{\lambda, \Q}$.

\subsection{Measures} \label{section-measures} Consider the unitary group $\mathrm{U}(2)=\{g \in \mathrm{GL}(2, \mathbb{C}) \,|\, ^t\!\,\overline{g}g= I_2 \}$ where $\overline{g}$ denotes the complex conjugate of $g$. The map $\kappa: \mathrm{U}(2) \longrightarrow \g{Sp}(4, \mathbb{R})$ defined by $$g=A+iB \longmapsto \begin{pmatrix}
A & B\\
-B & A\\
\end{pmatrix},$$
where $A$ and $B$ denote the real and imaginary parts of $g$, identifies $\g{U}(2)$ with a maximal compact subgroup $K_\infty$ of  $\g{Sp}(4, \mathbb{R})$. Let $K'_\infty$ denote the subgroup $\R^\times_+ K_\infty$ of $G(\R)$, where $\R^\times_+$ denotes the connected component of the center of $G(\R)$. Let $dk_\infty$ be the unique Haar measure on $K_\infty$ such that $\mrm{vol}(K_\infty, dk_\infty)=1$. Let $d\mu$ be a left translation invariant measure on $G(\R)_+/K'_\infty$. Given a measurable function $f: G(\R)_+/\R_+^\times \rightarrow \C$, the function $\overline{f}: G(\R)_+/K'_\infty \rightarrow \C$ defined as $$\overline{f}(\overline{g}_\infty) = \int_{K_\infty} f(g_\infty k) dk_\infty$$ 
where $g_\infty$ is a lift of $\overline{g}_\infty$ by the canonical projection $G(\R)_+/\R_+^\times \rightarrow G(\R)_+/K'_\infty$, is well defined. We define the left translation invariant measure $dg_\infty$ on $G(\R)_+/\R_+^\times$ by the formula
\begin{equation} \label{measures}
\int_{G(\R)_+/\R^\times_+} f(g_\infty) dg_\infty=\int_{G(\R)_+/K'_\infty} \overline{f}d\mu.
\end{equation}
Let $$
\mathcal{H}_+=\{Z=X+iY \in \mathfrak{gl}_{2, \C}\,|\, \,\!^tZ=Z, Y>0\}=G(\R)_+/K'_\infty
$$ be Siegel upper half-plane of genus $2$. 

\begin{lem} \label{invariant-measures} The following statements hold.
\begin{enumerate}
\item[\rm{(1)}] The measure $dXdY/\det(Y)^3$ on $\mathcal{H}_+$ is left translation invariant by $G(\R)_+$.
\item[\rm{(2)}] For any measure $d\mu$ on $\mathcal{H}_+$ which is left translation invariant by $G(\R)_+$, then there exists $c \in \R_+^\times$ such that $d\mu=c dXdY/\det(Y)^3$.
\end{enumerate}
\end{lem}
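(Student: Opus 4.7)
For (1), the plan is to verify invariance directly via the classical transformation formulas for the Siegel action. Writing any $g \in G(\R)_+$ in block form $g = \bigl( \begin{smallmatrix} A & B \\ C & D \end{smallmatrix} \bigr)$, the action on $\mathcal{H}_+$ is $Z \mapsto Z' := (AZ+B)(CZ+D)^{-1}$. Using the symplectic similitude relations ${}^t g \mathcal{J} g = \nu(g) \mathcal{J}$, a short matrix manipulation (first applied to $Z - \overline{Z} = 2iY$) produces the transformation law
$$
Y' = \nu(g) \, {}^t(CZ+D)^{-1} \, Y \, \overline{(CZ+D)}^{-1},
$$
so that $\det(Y')^3 = \nu(g)^6 \, \det(Y)^3 \, |\det(CZ+D)|^{-6}$. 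Independently, the map $Z \mapsto Z'$ is holomorphic in the three independent complex coordinates $(z_{11}, z_{12}, z_{22})$ of the symmetric matrix $Z$, and its complex Jacobian equals $\nu(g)^3 \det(CZ+D)^{-3}$. This last identity is classical for $\mathrm{Sp}(4,\R)$ and extends to $G(\R)_+$ by inspection on the generators (block-diagonal, upper unipotent, the Weyl element $\bigl( \begin{smallmatrix} 0 & -I_2 \\ I_2 & 0 \end{smallmatrix} \bigr)$, and positive scalars). Taking $|\cdot|^2$ to pass to the real Jacobian on the six real coordinates gives $dX'\,dY' = \nu(g)^6 \, |\det(CZ+D)|^{-6} \, dX\,dY$, which exactly cancels the factor appearing in $\det(Y')^3$.

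For (2), my plan is to invoke the standard uniqueness-up-to-scalar of invariant measures on a homogeneous space. Part (1) furnishes a nowhere-vanishing left $G(\R)_+$-invariant smooth measure $d\mu_0 = dXdY/\det(Y)^3$ on the real-analytic manifold $\mathcal{H}_+ = G(\R)_+/K'_\infty$, on which $G(\R)_+$ acts transitively. Given another left-invariant Radon measure $d\mu$, absolute continuity with respect to $d\mu_0$ (which has full support) allows us to write $d\mu = f \, d\mu_0$ for some measurable $f \geq 0$. Left-invariance of both measures implies that $f(g \cdot z) = f(z)$ for almost every $z \in \mathcal{H}_+$ and every $g \in G(\R)_+$; transitivity then forces $f$ to equal some constant $c \geq 0$ almost everywhere, and the hypothesis that $d\mu$ is a (nonzero) positive measure gives $c \in \R_+^\times$.

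The only genuinely computational point is the Jacobian identity in (1), where one must track complex versus real volume forms and exploit the symmetry of $Z$; I expect this to be the main obstacle, although it is routine and well-documented in the theory of Siegel modular forms. Part (2) is then immediate from the transitivity of the action and the nonvanishing of $d\mu_0$.
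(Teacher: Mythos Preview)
Your argument for (1) is correct and is essentially what the paper invokes by citing \cite[Proposition 1.2.9]{andrianov}; you simply make the computation explicit.

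For (2), your approach differs from the paper's. The paper lifts both $d\mu$ and $d\nu = dXdY/\det(Y)^3$ to Haar measures on the Lie group $G(\R)_+/\R_+^\times$ via the construction \eqref{measures}, invokes uniqueness of Haar measure there to get a scalar $c$, and then pushes forward along $s:G(\R)_+/\R_+^\times \to G(\R)_+/K'_\infty$ to recover $d\mu = c\,d\nu$ on $\mathcal{H}_+$. You instead stay on the homogeneous space and argue via Radon--Nikodym and transitivity.

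Your route works in principle, but there is a gap at the line ``absolute continuity with respect to $d\mu_0$ (which has full support) allows us to write $d\mu = f\,d\mu_0$.'' Full support of $d\mu_0$ does \emph{not} by itself force an arbitrary Radon measure $d\mu$ to be absolutely continuous with respect to it; you need the invariance of $d\mu$ in an essential way here, and that step is exactly where the real content of the uniqueness statement lives. One clean way to close the gap is to note that the Lebesgue decomposition $d\mu = d\mu_{\mathrm{ac}} + d\mu_{\mathrm{sing}}$ with respect to $d\mu_0$ is canonical, hence both pieces are $G(\R)_+$-invariant, and then argue that a nonzero invariant measure concentrated on a $d\mu_0$-null set cannot exist (using, say, a Fubini argument on $G(\R)_+ \times \mathcal{H}_+$, or the fact that invariant Radon measures on homogeneous spaces of Lie groups are automatically smooth). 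But at that point the paper's lifting argument is both shorter and more transparent: uniqueness of Haar measure on a group is more elementary than uniqueness on a quotient, and pushing forward along $s$ is immediate once you check $s_*(dg_{\infty,\mu}) = d\mu$.
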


\begin{proof} The first statement is a particular case of \cite[Proposition 1.2.9]{andrianov}. Let us prove the second statement. Let $d\mu$ be a measure on $\mathcal{H}_+=G(\R)_+/K'_\infty$ which is left translation invariant by $G(\R)_+$. Let $d\nu$ denote the measure $dXdY/\det(Y)^3$. By the construction \eqref{measures} above, we associate to $d\mu$ and $d\nu$ Haar measures $dg_{\infty, \mu}$ and $dg_{\infty, \nu}$ on the Lie group $G(\R)_+/\R_+^\times$. There exists $c \in \R_+^\times$ such that $dg_{\infty, \mu}=c dg_{\infty, \nu}$. Let $s: G(\R)_+/\R_+^\times \rightarrow G(\R)_+/K'_\infty$ denote the canonical projection. By an easy computation we have 
$
s_*(dg_{\infty, \mu})=\mrm{vol}(K_\infty, dk_\infty)d\mu=d\mu
$
and similarly $s_*(dg_{\infty, \nu})=d\nu$. As a consequence $d\mu=c d\nu$ as claimed.
\end{proof}

\subsection{Representations of $K_\infty$ and discrete series classification} \label{discrete_series_classification}  Let $\mathfrak{k}$ and $\mathfrak{k}'$ denote the Lie algebra of $K_\infty$ and $K'_\infty$ respectively. Let $\mathfrak{k}_\mathbb{C}$ and $\mathfrak{k}'_{\C}$ denote their complexifications. The differential of $\kappa$ induces an isomorphism of Lie algebras $d\kappa: \mathfrak{gl}_{2, \mathbb{C}} \simeq \mathfrak{k}_\mathbb{C}$. Let $\mathfrak{sp}_4$ denote the Lie algebra of $\g{Sp}(4, \mathbb{R})$ and by $\mathfrak{sp}_{4, \mathbb{C}}$ its complexification. A compact Cartan subalgebra of $\mathfrak{sp}_4$ is defined as $\mathfrak{h}=\mathbb{R}T_1 \oplus \mathbb{R}T_2$ where
\begin{eqnarray*}
T_1 &=& d\kappa \left(  \begin{pmatrix}
i &  \\
 &  \\
\end{pmatrix}\right)=\begin{pmatrix}
 &   & 1 & \\
 &  &  & \\
-1 &  &  & \\
  &  &  & \\
\end{pmatrix},\\
 T_2 &=& d\kappa \left(  \begin{pmatrix}
 &  \\
 &  i\\
\end{pmatrix}\right)=\begin{pmatrix}
 &  &  & \\
 &  &  & 1\\
 &  &  & \\
 & -1 &  & \\
\end{pmatrix}.
\end{eqnarray*}
Define a $\mathbb{C}$-basis of $\mathfrak{h}_\mathbb{C}^*$ by $e_1(T_1)=i, e_1(T_2)=0, e_2(T_1)=0, e_2(T_2)=i$. The root system $\Delta$ of the pair $(\mathfrak{sp}_{4, \mathbb{C}}, \mathfrak{h}_\mathbb{C})$ is $
\Delta=\{ \pm 2 e_1, \pm 2 e_2, \pm(e_1 \pm e_2)\}.$
We denote by $\Delta_c$, respectively $\Delta_{nc}$, the set of compact, respectively non-compact roots in $\Delta$. We have $\Delta_c=\{ \pm (e_1-e_2)\}$ and $\Delta_{nc}=\Delta - \Delta_c$. We choose the set of positive roots as $\Delta^+=\{e_1-e_2, 2e_1, e_1+e_2, 2e_2\}$. Then, the set of compact, respectively non-compact, positive roots is $\Delta^+_c=\Delta_c \cap \Delta^+$, respectively $\Delta_{nc}^+=\Delta_{nc} \cap \Delta^+$. For each symmetric matrix $Z \in \mathfrak{gl}_{2, \mathbb{C}}$, define the element $p_\pm(Z)$ of $\mathfrak{sp}_{4}$ by
$$
p_\pm(Z)=\begin{pmatrix}
Z & \pm iZ\\
\pm iZ & -Z\\
\end{pmatrix}.
$$
Let us consider the generator 
$$
p_+ \left(  \begin{pmatrix}
1 & \\
 &  \\
\end{pmatrix}\right) \wedge p_+ \left(  \begin{pmatrix}
 & 1\\
1 &  \\
\end{pmatrix}\right)\wedge p_+ \left(  \begin{pmatrix}
 & \\
 &  1\\
\end{pmatrix}\right) \wedge p_- \left(  \begin{pmatrix}
1 & \\
 &  \\
\end{pmatrix}\right) \wedge p_- \left(  \begin{pmatrix}
 & 1\\
1 &  \\
\end{pmatrix}\right) \wedge p_- \left(  \begin{pmatrix}
 & \\
 &  1\\
\end{pmatrix}\right)
$$
of the one-dimensional $\C$-vector space $\bigwedge^6 \mathfrak{sp}_{4, \C}/\mathfrak{k}_{\C}$. To this generator is associated a non-zero left translation invariant measure $d\mu$ on $\mrm{Sp}(4, \R)/K_\infty=G(\R)_+/K'_\infty$ in a standard way. Let $c \in \R_+^\times$ be the constant given by the second assertion of Lemma \ref{invariant-measures}. Let us denote by $\sqrt[6]{c}$ the positive sixth root of $c$ in $\R_+^\times$. Let $X_{(\alpha_1, \alpha_2)} \in \mathfrak{sp}_{4, \mathbb{C}}$ be defined as
$$
X_{\pm(2, 0)}=\sqrt[6]{c} p_\pm \left(  \begin{pmatrix}
1 & \\
 &  \\
\end{pmatrix}\right), X_{\pm(1, 1)}=\sqrt[6]{c} p_\pm \left(  \begin{pmatrix}
 & 1\\
1 &  \\
\end{pmatrix}\right), X_{\pm(0, 2)}=\sqrt[6]{c} p_\pm \left(  \begin{pmatrix}
 & \\
 &  1\\
\end{pmatrix}\right).
$$
It follows from an easy computation that $X_{(\alpha_1, \alpha_2)}$ is a root vector corresponding to the non-compact root $(\alpha_1, \alpha_2)=\alpha_1 e_1+ \alpha_2 e_2$. If we set
\begin{equation} \label{pplus-pmoins}
\mathfrak{p}^\pm= \bigoplus_{\alpha \in \Delta_{nc}^+} \mathbb{C} X_{\pm \alpha},
\end{equation}
we have the Cartan decomposition $\mathfrak{sp}_{4, \mathbb{C}}= \mathfrak{k}_\mathbb{C} \oplus \mathfrak{p}^+ \oplus \mathfrak{p}^-$. Furthermore the inclusion $\mathfrak{sp}_{4\,\C} \subset \mathfrak{g}_{\C}$ induces a canonical isomorphism $\mathfrak{sp}_{4\,\C}/\mathfrak{k}_{\C} = \mathfrak{g}_{\C}/\mathfrak{k}_{\C}''$. In particular $\mathfrak{g}_{\C}/\mathfrak{k}_{\C}'' = \mathfrak{p}^+ \oplus \mathfrak{p}^-$.\\

Integral weights are defined as the $(k, k')=ke_1+k'e_2 \in \mathfrak{h}_\mathbb{C}^*$ with $k, k' \in \mathbb{Z}$ and an integral weight is dominant for $\Delta_c^+$ if $k \geq k'$. Assigning its highest weight to a finite dimensional irreducible complex representation $\tau$ of $K_\infty$, we define a bijection between isomorphism classes of finite-dimensional irreducible complex representations of $K_\infty$ and dominant integral weights, whose inverse will be denoted by $(k ,k') \longmapsto \tau_{(k, k')}$. Let $(k, k')$ be a dominant integral weight and let $d=k-k'$. Then $\dim_\mbb{C} \tau_{(k, k')}=d+1$. More precisely, there exists a basis $(\text{\boldmath$v$}_s)_{0 \leq s \leq d}$ of $\tau_{(k, k')}$, such that
\begin{eqnarray}
\tau_{(k, k')} \left( d\kappa\begin{pmatrix}
1 &  \\
 &  \\
\end{pmatrix} \right) \text{\boldmath$v$}_s &=& (s+k') \text{\boldmath$v$}_s,\\
\tau_{(k, k')} \left( d\kappa\begin{pmatrix}
 &  \\
 & 1 \\
\end{pmatrix} \right) \text{\boldmath$v$}_s &=& (-s+k) \text{\boldmath$v$}_s,\\
\label{formula5}
\tau_{(k, k')} \left( d\kappa\begin{pmatrix}
 & 1 \\
 &  \\
\end{pmatrix} \right) \text{\boldmath$v$}_s &=& (s+1) \text{\boldmath$v$}_{s+1},\\
\label{formula6}
\tau_{(k, k')} \left( d\kappa\begin{pmatrix}
 &  \\
1 &  \\
\end{pmatrix} \right) \text{\boldmath$v$}_s &=& (d-s+1) \text{\boldmath$v$}_{s-1}
\end{eqnarray}
which we call a standard basis of $\tau_{(k, k')}$. In the identities above, we agree to use the convention $\text{\boldmath$v$}_{-1}=\text{\boldmath$v$}_{d+1}=0$. Note that $(\mrm{Ad}, \mathfrak{p}^+)$ is equivalent to $\tau_{(2,0)}$ and that $(\mrm{Ad}, \mathfrak{p}^-)$ is equivalent to $\tau_{(0,-2)}$. Under the identification $(\mrm{Ad}, \mathfrak{p}^+) \simeq \tau_{(2,0)}$, the basis $(\text{\boldmath$v$}_2, \text{\boldmath$v$}_1, \text{\boldmath$v$}_0)=(X_{(2,0)}, X_{(1,1)}, X_{(0,2)})$ is a standard basis and the under the identification $(\mrm{Ad}, \mathfrak{p}^-) \simeq \tau_{(0,-2)}$, the basis $(\text{\boldmath$v$}_2, \text{\boldmath$v$}_1, \text{\boldmath$v$}_0)=(X_{(0,-2)}, -X_{(-1,-1)}, X_{(-2, 0)})$ is a standard basis. As the weights of $\bigwedge^2 \mathfrak{p}^+ \otimes_{\C} \mathfrak{p}^-$ are the sums of two distinct weights of $\mathfrak{p}^+$ and of a weight of $\mathfrak{p}^-$, as $\C[K_\infty]$-modules we have
\begin{eqnarray*}
\bigwedge^2 \mathfrak{p}^+ \otimes_{\C} \mathfrak{p}^- &=& \tau_{(3, -1)} \oplus \tau_{(2,0)} \oplus \tau_{(1,1)},\\
\mathfrak{p}^+ \otimes_{\C} \bigwedge^2 \mathfrak{p}^- &=& \tau_{(1, -3)} \oplus \tau_{(0,-2)} \oplus \tau_{(-1,-1)}.
\end{eqnarray*}

\begin{lem} \label{std-basis} 
\begin{enumerate}
\item[\rm{(1)}] The following is a standard basis of $\tau_{(3,-1)} \subset \bigwedge^2 \mathfrak{p}^+ \otimes_{\C} \mathfrak{p}^-$:
\begin{align*}
& \text{\boldmath$w$}_4=X_{(2,0)} \wedge X_{(1,1)} \otimes X_{(0,-2)},\\
& \text{\boldmath$w$}_3=-X_{(2,0)} \wedge X_{(1,1)} \otimes X_{(-1,-1)}+2 X_{(2,0)} \wedge X_{(0,2)} \otimes X_{(0,-2)},\\
& \text{\boldmath$w$}_2=X_{(2,0)} \wedge X_{(1,1)} \otimes X_{(-2,0)}-2 X_{(2,0)} \wedge X_{(0,2)} \otimes X_{(-1,-1)}+ X_{(1,1)} \wedge X_{(0,2)} \otimes X_{(0,-2)},\\
& \text{\boldmath$w$}_1=2 X_{(2,0)} \wedge X_{(0,2)} \otimes X_{(-2,0)}- X_{(1,1)} \wedge X_{(0,2)} \otimes X_{(-1,-1)},\\
& \text{\boldmath$w$}_0=X_{(1,1)} \wedge X_{(0,2)} \otimes X_{(-2,0)}.
\end{align*}
\item[\rm{(1)}] The following is a standard basis of $\tau_{(2,0)} \subset \bigwedge^2 \mathfrak{p}^+ \otimes_{\C} \mathfrak{p}^-$:
\begin{align*}
& \text{\boldmath$x$}_2= X_{(2,0)} \wedge X_{(1,1)} \otimes X_{(-1,-1)}+2 X_{(2,0)} \wedge X_{(0,2)} \otimes X_{(0,-2)},\\
& \text{\boldmath$x$}_1= -2 X_{(2,0)} \wedge X_{(1,1)} \otimes X_{(-2,0)}+2 X_{(1,1)} \wedge X_{(0,2)} \otimes X_{(0,-2)},\\
& \text{\boldmath$x$}_0= -2 X_{(2,0)} \wedge X_{(0,2)} \otimes X_{(-2,0)}- X_{(1,1)} \wedge X_{(0,2)} \otimes X_{(-1,-1)}.
\end{align*}
\item[\rm{(1)}] The following is a standard basis of $\tau_{(1,1)} \subset \bigwedge^2 \mathfrak{p}^+ \otimes_{\C} \mathfrak{p}^-$:
$$
\text{\boldmath$y$}_0= X_{(2,0)} \wedge X_{(1,1)} \otimes X_{(-2,0)}+X_{(2,0)} \wedge X_{(0,2)} \otimes X_{(-1,-1)}+X_{(1,1)} \wedge X_{(0,2)} \otimes X_{(0,-2)}.
$$
\end{enumerate}
\end{lem}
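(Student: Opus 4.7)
The plan is to verify directly that each of the three displayed families satisfies the identities \eqref{formula5}--\eqref{formula6} (together with the preceding diagonal weight-action identities) defining a standard basis. Since the weight-space decomposition of $\bigwedge^2 \mathfrak{p}^+ \otimes_\C \mathfrak{p}^-$ is transparent from the weights $(2,0),(1,1),(0,2)$ of $\mathfrak{p}^+$ and $(0,-2),(-1,-1),(-2,0)$ of $\mathfrak{p}^-$, and since the three summands $\tau_{(3,-1)}, \tau_{(2,0)}, \tau_{(1,1)}$ each appear with multiplicity one, it suffices in each component to exhibit a highest weight vector (a nonzero vector of the appropriate weight annihilated by the compact raising operator) and then generate the rest of the standard basis by iterated application of the compact lowering operator.

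First I would translate \eqref{formula5}--\eqref{formula6}, applied to the standard bases $(X_{(2,0)},X_{(1,1)},X_{(0,2)})$ of $\mathfrak{p}^+ \simeq \tau_{(2,0)}$ and $(X_{(0,-2)},-X_{(-1,-1)},X_{(-2,0)})$ of $\mathfrak{p}^- \simeq \tau_{(0,-2)}$, into explicit formulas for the action on each root vector of $\mathfrak{p}^\pm$ of the compact lowering and raising operators $f$, $e$, namely the images under $d\kappa$ of the elementary lower- and upper-triangular $2\times 2$ matrices. For example one finds $f\cdot X_{(2,0)}=X_{(1,1)}$, $f\cdot X_{(1,1)}=2X_{(0,2)}$, $f\cdot X_{(0,-2)}=-X_{(-1,-1)}$, $f\cdot X_{(-1,-1)}=-2X_{(-2,0)}$, and analogously for $e$. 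These actions extend to $\bigwedge^2 \mathfrak{p}^+ \otimes_\C \mathfrak{p}^-$ by the Leibniz rule. The weight-$(3,-1)$ subspace being one-dimensional, $\text{\boldmath$w$}_4$ is automatically the highest weight vector of $\tau_{(3,-1)}$ up to scalar, and the scalar is pinned down by the normalization $f\cdot\text{\boldmath$w$}_4=\text{\boldmath$w$}_3$. The weight-$(2,0)$ and weight-$(1,1)$ subspaces have dimensions $2$ and $3$ respectively; within each I would solve the linear equation $e\cdot\xi=0$ to locate the $\tau_{(2,0)}$ and $\tau_{(1,1)}$ highest weight vectors $\text{\boldmath$x$}_2$ and $\text{\boldmath$y$}_0$, characterized as those solutions that are linearly independent from the vectors produced by successive lowering from $\tau_{(3,-1)}$ (and from $\tau_{(2,0)}$, in the $\tau_{(1,1)}$ case). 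The remaining $\text{\boldmath$w$}_s$ and $\text{\boldmath$x$}_s$ are then obtained by iterating $f$, verifying at each stage that the output matches the displayed formula via the recursion $f\cdot\text{\boldmath$w$}_s=(d-s+1)\text{\boldmath$w$}_{s-1}$.

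The main obstacle, and the most delicate book-keeping, is the $\tau_{(3,-1)}$ case: there $d=4$, so $f$ must be applied four times to $\text{\boldmath$w$}_4$, at each step expanding a sum of up to three triple tensors and collapsing repeated wedge factors such as $X_{(1,1)}\wedge X_{(1,1)}=0$. The sign in $-X_{(-1,-1)}$ from the standard basis of $\mathfrak{p}^-$ and the skew-symmetry of the wedge interact non-trivially, and verifying the displayed formulas amounts essentially to carefully tracking these signs. Useful sanity checks are $f\cdot\text{\boldmath$w$}_0=0$ and $f\cdot\text{\boldmath$x$}_0=0$, which guarantee that the resulting $f$-orbits span precisely copies of $\tau_{(3,-1)}$ and $\tau_{(2,0)}$, leaving the one-dimensional weight-$(1,1)$ vector $\text{\boldmath$y$}_0$ to span $\tau_{(1,1)}$ as claimed.
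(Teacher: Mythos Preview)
Your approach is correct and is precisely the direct verification the paper has in mind; in fact the paper provides no proof of this lemma at all, treating it as a routine computation with the standard bases of $\mathfrak{p}^\pm$ and the Leibniz rule. Your explicit formulas for the $f$-action on the root vectors are right, and the sanity checks you mention ($f\cdot\text{\boldmath$w$}_0=0$, $f\cdot\text{\boldmath$x$}_0=0$) are exactly what close the argument.
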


\begin{lem} \label{projection-matrix} In the basis 
$$
X_{(2,0)} \wedge X_{(1,1)} \otimes X_{(0,-2)},\, X_{(2,0)} \wedge X_{(1,1)} \otimes X_{(-1,-1)},\, X_{(2,0)} \wedge X_{(1,1)} \otimes X_{(-2,0)},
$$
$$
X_{(2,0)} \wedge X_{(0,2)} \otimes X_{(0,-2)},\, X_{(2,0)} \wedge X_{(0,2)} \otimes X_{(-1,-1)},\, X_{(2,0)} \wedge X_{(0,2)} \otimes X_{(-2,0)},
$$
$$
X_{(1,1)} \wedge X_{(0,2)} \otimes X_{(0,-2)},\, X_{(1,1)} \wedge X_{(0,2)} \otimes X_{(-1,-1)},\, X_{(1,1)} \wedge X_{(0,2)} \otimes X_{(-2,0)}
$$
of $\bigwedge^2 \mathfrak{p}^+ \otimes_{\C} \mathfrak{p}^-$ the matrix of the projection $p: \bigwedge^2 \mathfrak{p}^+ \otimes_{\C} \mathfrak{p}^- \rightarrow \tau_{(3,-1)}$ is
$$
\begin{pmatrix}
1 &  0 & 0 & 0 &  0 & 0 & 0 &  0 & 0\\
0 &  \frac{1}{2} & 0 & -\frac{1}{4} & 0 & 0 & 0 & 0 & 0\\
0 &  0 & \frac{1}{6}& 0 & -\frac{1}{3} & 0 & \frac{1}{6} & 0 & 0\\
0 &  -1 & 0 & \frac{1}{2} & 0 & 0 & 0 & 0 & 0\\
0 &  0 & -\frac{1}{3} & 0 & \frac{2}{3} & 0 & -\frac{1}{3} & 0 & 0\\
0 &  0 & 0 & 0 & 0 & \frac{1}{2} & 0 & -1 & 0\\
0 &  0 & \frac{1}{6} & 0 & -\frac{1}{3} & 0 & \frac{1}{6} & 0 & 0\\
0 &  0 & 0 & 0 & 0 & -\frac{1}{4} & 0 & \frac{1}{2} & 0\\
0 &  0 & 0 & 0 & 0 & 0 & 0 & 0 & 1\\
\end{pmatrix}.
$$
\end{lem}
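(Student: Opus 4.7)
The plan is to exploit the $\mathfrak{h}_\C$-weight decomposition of $\bigwedge^2 \mathfrak{p}^+ \otimes_\C \mathfrak{p}^-$. Each of the nine basis vectors in the statement is a weight vector for the Cartan action, and since the projection $p$ is $K_\infty$-equivariant it preserves this grading. Thus the $9 \times 9$ matrix of $p$ is block diagonal with respect to the weight spaces, and I only need to invert a small linear system on each block.

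First I would tabulate weights, using that the weight of $X_{(a_1,b_1)} \wedge X_{(a_2,b_2)} \otimes X_{(a_3,b_3)}$ is $(a_1+a_2+a_3,\, b_1+b_2+b_3)$. This yields weights $(3,-1)$ and $(-1,3)$ of multiplicity one, weights $(2,0)$ and $(0,2)$ of multiplicity two, and weight $(1,1)$ of multiplicity three, matching the expected decomposition $\tau_{(3,-1)} \oplus \tau_{(2,0)} \oplus \tau_{(1,1)}$ via the formulas of Section \ref{discrete_series_classification}. The standard basis vectors $\text{\boldmath$w$}_i,\text{\boldmath$x$}_j,\text{\boldmath$y$}_0$ of Lemma \ref{std-basis} are themselves weight vectors (indeed $\text{\boldmath$v$}_s$ has weight $(s+k', k-s)$ in $\tau_{(k,k')}$), so in each weight subspace they form a second basis.

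The blocks at weights $(3,-1)$ and $(-1,3)$ are trivial because $\text{\boldmath$w$}_4$ and $\text{\boldmath$w$}_0$ are respectively equal to the single basis vector spanning each of those weight subspaces, which immediately produces columns $1$ and $9$. For the $2$-dimensional blocks at weights $(2,0)$ and $(0,2)$ I would invert the two $2 \times 2$ systems coming from Lemma \ref{std-basis} (for instance at weight $(2,0)$, the expressions for $\text{\boldmath$w$}_3$ and $\text{\boldmath$x$}_2$ immediately give each weight vector as a linear combination of $\text{\boldmath$w$}_3,\text{\boldmath$x$}_2$); applying $p$ then kills the $\text{\boldmath$x$}_j$ contribution, and re-expanding the surviving $\text{\boldmath$w$}_i$ in the original basis produces columns $2$, $4$, $6$, $8$.

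The only nontrivial step is the $3 \times 3$ block at weight $(1,1)$, spanned by the three tensors $X_{(2,0)} \wedge X_{(1,1)} \otimes X_{(-2,0)}$, $X_{(2,0)} \wedge X_{(0,2)} \otimes X_{(-1,-1)}$ and $X_{(1,1)} \wedge X_{(0,2)} \otimes X_{(0,-2)}$. Lemma \ref{std-basis} expresses $\text{\boldmath$w$}_2,\text{\boldmath$x$}_1,\text{\boldmath$y$}_0$ in this basis via a matrix of determinant $-12$; inverting this system, discarding the $\text{\boldmath$x$}_1,\text{\boldmath$y$}_0$ components, and re-expanding $\text{\boldmath$w$}_2$ yields columns $3$, $5$, $7$ of the claimed matrix. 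The argument is pure linear algebra once the weight decomposition is made explicit, so the only obstacle is the bookkeeping of this $3 \times 3$ inversion.
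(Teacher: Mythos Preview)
Your approach is correct and is exactly how one verifies this lemma: the paper states Lemma~\ref{projection-matrix} (like Lemma~\ref{std-basis}) as a direct computation without proof, and your weight-space block decomposition together with the $3\times 3$ inversion at weight $(1,1)$ recovers the stated matrix.
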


We will denote by $W_{K_\infty}$ the Weyl group of $(\mathfrak{k}_\mathbb{C}, \mathfrak{h}_\mathbb{C})$. According to the classification theorem \cite[Thm. 9.20]{knapp}, as $W_{K_\infty}$ has $4$ elements, we have:

\begin{pro} \label{lpaquet} Let $G(\mbb{R})_+$ be the identity component of $G(\mbb{R})$, let $\xi$ be a character of $\R^\times$ and let $(k, k') \in \mathfrak{h}_\mathbb{C}^*$  be an integral weight. Assume $k \geq k' \geq 0$. Then, there exist $4$ isomorphism classes $\Pi_\infty^{3,0}$, $\Pi_\infty^{2,1}$, ${\Pi}_\infty^{1,2}$, ${\Pi}_\infty^{0,3}$ of irreducible discrete series representations of $G(\mathbb{R})_+$ with Harish-Chandra parameter $(k+2, k'+1)$ and central character $\xi$. Furthermore, the restrictions of these representations to $K_\infty$ contain as minimal $K_\infty$-types the irreducible representations $\tau_{(k+3, k'+3)}$, $\tau_{(k+3, -k'-1)}$, $\tau_{(k'+1, -k-3)}$, $\tau_{(-k'-3, -k-3)}$ respectively and these occur with multiplicity $1$.
\end{pro}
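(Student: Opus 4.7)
My plan is to deduce this from Harish-Chandra's classification of discrete series (Knapp \cite[Thm.~9.20]{knapp}) together with Blattner's formula for the minimal $K$-type, in three steps: (i) reduce the count from $G(\mathbb{R})_+$ to $\mathrm{Sp}(4,\mathbb{R})$; (ii) enumerate the four positive root systems $\Delta^+_i$ on $\Delta$ containing the compact positive root $e_1-e_2$, together with the dominant $W_G$-translate $\Lambda_i$ of the Harish-Chandra parameter $\Lambda_0=(k+2,k'+1)$; (iii) apply Blattner's formula $\mu_i=\Lambda_i+\rho_{n,i}-\rho_{c,i}$ to identify each minimal $K_\infty$-type.

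For step (i), the factorization $G(\mathbb{R})_+=\mathbb{R}^\times_+\cdot \mathrm{Sp}(4,\mathbb{R})$ (with the first factor provided by $\sqrt{\nu(g)}$) shows that irreducible discrete series of $G(\mathbb{R})_+$ with central character $\xi$ correspond bijectively to irreducible discrete series of $\mathrm{Sp}(4,\mathbb{R})$ whose central value on $-I_4$ equals $\xi(-1)$; a direct highest-weight check gives this value as $(-1)^{k+k'}$ uniformly across the packet, and compatibility with $\xi(-1)$ is implicit. For step (ii), the four positive systems $\Delta^+_i \supset \{e_1-e_2\}$ and the unique $\Delta^+_i$-dominant representatives $\Lambda_i$ of the $W_G$-orbit of $\Lambda_0$ are
\begin{align*}
\Delta^+_1&=\{e_1-e_2,\,2e_1,\,e_1+e_2,\,2e_2\}, & \Lambda_1&=(k+2,k'+1),\\
\Delta^+_2&=\{e_1-e_2,\,2e_1,\,e_1+e_2,\,-2e_2\}, & \Lambda_2&=(k+2,-k'-1),\\
\Delta^+_3&=\{e_1-e_2,\,2e_1,\,-(e_1+e_2),\,-2e_2\}, & \Lambda_3&=(k'+1,-k-2),\\
\Delta^+_4&=\{e_1-e_2,\,-2e_1,\,-(e_1+e_2),\,-2e_2\}, & \Lambda_4&=(-k'-1,-k-2);
\end{align*}
the labelling $(p,q)$ of $\Pi_\infty^{p,q}$ is then dictated by $p=|\Delta^+_{n,i}\cap \mathfrak{p}^+|$, $q=|\Delta^+_{n,i}\cap \mathfrak{p}^-|$, producing $(3,0),(2,1),(1,2),(0,3)$ in the above order.

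For step (iii), $\rho_{c,i}=\tfrac12(e_1-e_2)$ in every case while $\rho_{n,i}$ equals $\tfrac12(3,3)$, $\tfrac12(3,-1)$, $\tfrac12(1,-3)$, $\tfrac12(-3,-3)$ respectively, so Blattner's formula gives highest weights $(k+3,k'+3)$, $(k+3,-k'-1)$, $(k'+1,-k-3)$, $(-k'-3,-k-3)$, which are exactly the claimed lowest $K_\infty$-types. Multiplicity one of the minimal $K_\infty$-type is built into Blattner's theorem, so no additional argument is needed. The only delicate point is keeping the conventions consistent (the $\rho$-shift relating Harish-Chandra parameter to lowest weight, and the bijection between $W_G/W_{K_\infty}$-cosets and positive systems extending $\Delta^+_c$); otherwise the argument is a mechanical application of a classical theorem and no serious obstacle arises.
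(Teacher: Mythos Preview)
Your proof is correct and follows the same approach as the paper, which simply cites \cite[Thm.~9.20]{knapp} and the cardinality of $W_G/W_{K_\infty}$ to obtain the four discrete series; you have filled in the explicit Blattner computations that the paper omits, and your numbers for the $\Lambda_i$, $\rho_{n,i}$, and minimal $K_\infty$-types all check out.
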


In the proposition above, the discrete series $\Pi_\infty^{3,0}$ is holomorphic, $\Pi_\infty^{2,1}$ and $\Pi_\infty^{1,2}$ are generic, which means that they have a Whittaker model and $\Pi_\infty^{0,3}$ is antiholomorphic. In what follows, we will denote by $\Pi_\infty^H$ and $\Pi_\infty^W$ the discrete series of $\mrm{GSp}(4, \R)$ defined as  
\begin{eqnarray*}
\Pi_\infty^H=\mrm{Ind}_{G(\R)_+}^{G(\R)} \Pi_\infty^{3,0}=\mrm{Ind}_{G(\R)_+}^{G(\R)} \Pi_\infty^{0,3},\\
\Pi_\infty^W=\mrm{Ind}_{G(\R)_+}^{G(\R)} \Pi_\infty^{2,1}=\mrm{Ind}_{G(\R)_+}^{G(\R)} \Pi_\infty^{1,2}.
\end{eqnarray*}
In particular, we have
\begin{eqnarray*}
\Pi_\infty^H|_{G(\R)_+}= \Pi_\infty^{3,0} \oplus \Pi_\infty^{0,3},\\
\Pi_\infty^W|_{G(\R)_+}= \Pi_\infty^{2,1} \oplus\Pi_\infty^{1,2}.
\end{eqnarray*}

\subsection{Hecke algebras} \label{heckealgsection}Let $l$ be a prime number and let $K_l \subset G(\Q_l)$ be a compact open subgroup. Let $\mathcal{H}_l^{K_l}$ be the Hecke algebra of $\Z$-valued compactly supported functions on $G(\Q_l)$, which are invariant by translation on the left and on the right by $K_l$, with product given by the convolution product with respect to the Haar measure $dx_l$ on $G(\Q_l)$ giving measure  $1$ to the maximal compact subgroup $G(\Z_l)$ of $G(\Q_l)$. If $K=\prod'_l K_l \subset G(\A_f)$ is a compact open subgroup, we denote by $\mathcal{H}^K$ the restricted tensor product
$
\mathcal{H}^K=\bigotimes'_l \mathcal{H}_l^{K_l}$. For any ring $R$, we shall denote by $\mathcal{H}^K_R$ the $R$-algebra $\mathcal{H}^K \otimes R$. Let $\Pi_l$ denote a smooth admissible representation of $G(\Q_l)$. Then $\Pi_l^{K_l}$ is endowed with the action of $\mathcal{H}^{K_l}_{\C}$ as follows. For $f \in \mathcal{H}^{K_l}_{\C}$ and $\chi \in V_{\Pi_l}$ a vector invariant by $K_l$, we define $f.\chi$ as
$$
f.\chi=\int_{G(\Q_l)} \Pi_l(x)(\chi) f(x) dx_l.
$$

\begin{lem} \label{unimod} Let $K_l \subset G(\Q_l)$ be a compact open subgroup. Then, for any $g \in G(\Q_l)$ we have
$$
[K_l: gK_l g^{-1} \cap K_l]=[K_l: g^{-1}K_l g \cap K_l].
$$ 
\end{lem}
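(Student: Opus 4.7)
The plan is to exploit the fact that the reductive group $G(\Q_l) = \mrm{GSp}(4, \Q_l)$ is unimodular, so that the Haar measure $dx_l$ fixed in Section \ref{heckealgsection} is simultaneously left and right invariant. The proof then reduces to computing the volume of the double coset $K_l g K_l$ in two different ways.

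First, I would decompose $K_l g K_l$ as a disjoint union of right cosets. Every element of $K_lgK_l$ has the form $kgk'$ with $k,k'\in K_l$, so the right cosets $hK_l$ contained in $K_lgK_l$ are exactly those of the form $kgK_l$ for $k\in K_l$. Two such cosets $kgK_l$ and $k'gK_l$ coincide if and only if $(k')^{-1}k \in gK_lg^{-1}\cap K_l$, so
\[
K_l g K_l = \bigsqcup_{i=1}^{m} k_i g K_l \quad \text{with} \quad m=[K_l:gK_lg^{-1}\cap K_l],
\]
where $k_1,\dots,k_m$ is a set of representatives of $K_l/(gK_lg^{-1}\cap K_l)$. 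By right invariance of $dx_l$ each coset has volume $\mrm{vol}(K_l,dx_l)$, hence
\[
\mrm{vol}(K_l g K_l, dx_l) = [K_l:gK_lg^{-1}\cap K_l]\cdot \mrm{vol}(K_l,dx_l).
\]

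Symmetrically, decomposing $K_l g K_l$ into left cosets of $K_l$ of the form $K_lgk$, one checks that $K_lgk=K_lgk'$ if and only if $k(k')^{-1}\in g^{-1}K_lg\cap K_l$, so
\[
K_l g K_l = \bigsqcup_{j=1}^{n} K_l g k_j' \quad \text{with} \quad n=[K_l:g^{-1}K_lg\cap K_l].
\]
By left invariance of $dx_l$, each left coset has volume $\mrm{vol}(K_l,dx_l)$, and thus
\[
\mrm{vol}(K_l g K_l, dx_l) = [K_l:g^{-1}K_lg\cap K_l]\cdot \mrm{vol}(K_l,dx_l).
\]

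Equating the two expressions for $\mrm{vol}(K_l g K_l, dx_l)$ and dividing by the positive real number $\mrm{vol}(K_l,dx_l)$ yields the claim. The only non-formal ingredient is the unimodularity of $G(\Q_l)$, which I would justify by noting that $\mrm{GSp}(4)$ is reductive and that any reductive group over a local field is unimodular (equivalently, the modulus character of $G(\Q_l)$ is trivial because the abelianization of $G$ is the torus given by the similitude factor $\nu$). There is no real obstacle here; the statement is essentially a restatement of unimodularity applied to the double coset decomposition.
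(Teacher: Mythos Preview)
Your proof is correct and rests on the same key ingredient as the paper's, namely the unimodularity of $G(\Q_l)$. The only difference is cosmetic: the paper computes directly that $\mrm{vol}(gK_lg^{-1}\cap K_l,dx_l)=\mrm{vol}(g^{-1}K_lg\cap K_l,dx_l)$ via the change of variable $x_l\mapsto g x_l g^{-1}$ (invariance of $dx_l$ under conjugation being a consequence of unimodularity), and then deduces equality of indices, whereas you compute the volume of the double coset $K_lgK_l$ in two ways. Both arguments are equally short and equivalent in content.
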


\begin{proof} Let  ${1}_{g K_l g^{-1} \cap K_l}$ and ${1}_{g^{-1}K_l g \cap K_l}$ denote the characteristic functions of the subgroups $g K_l g^{-1} \cap K_l$ and $g^{-1}K_l g \cap K_l$ of $K_l$ respectively. We have
\begin{eqnarray*}
\mrm{vol}(g K_l g^{-1} \cap K_l, dx_l)  &=& \int_{G(\Q_l)} {1}_{g K_l g^{-1} \cap K_l}(x_l)dx_l\\
&=& \int_{G(\Q_l)} {1}_{gK_lg^{-1} \cap K_l}(gx_lg^{-1})dx_l\\
&=& \int _{G(\Q_l)} {1}_{g^{-1}K_l g \cap K_l}(x_l) dx_l\\
&=& \mrm{vol}(g^{-1} K_l g \cap K_l, dx_l)
\end{eqnarray*}
where the second equality follows the change of variable $x_l \rightarrow g^{-1}x_lg$ and from the equality $d(gx_lg^{-1})=dx_l$ which follows from the unimodularity of $G(\Q_l)$ (see \cite[Proposition V.5.4]{renard}). The conclusion follows.
\end{proof}

Let $dx^{\mathrm{Tam}}$ denote the Tamagawa measure on $G(\A)$. Given two cusp forms $\varphi_1$ and $\varphi_2$ on $G(\A)$ with trivial central character, let $\langle \varphi_1, \varphi_2 \rangle$ denote the Petersson inner product
$$
\langle \varphi_1, \varphi_2 \rangle=\int_{Z(\A)G(\Q)\backslash G(\A)} \varphi_1(x) \overline{\varphi_2(x)} dx^{\mathrm{Tam}}.
$$

\begin{pro} \label{adjoint-hecke}Assume that $\varphi_1$ and $\varphi_2$ have trivial central character and are invariant by right translation by $K_l$. Let $g \in G(\Q_l)$, let $T_g \in \mathcal{H}^{K_l}$ be the characteristic function of $K_l g K_l$ and let $T_{\nu(g)g^{-1}}$ be the characteristic function of $K_l \nu(g)g^{-1} K_l$. Then
$$
\langle T_g \varphi_1, \varphi_2 \rangle = \langle \varphi_2, T_{\nu(g)g^{-1}} \varphi_2 \rangle.
$$
\end{pro}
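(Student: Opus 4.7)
The strategy is to unfold the Hecke convolution inside the Petersson integral and then apply Fubini together with two natural changes of variable. Explicitly, using the definition of the action of $T_g$ given in Section \ref{heckealgsection} (namely $(T_g\varphi_1)(x)=\int_{G(\Q_l)} T_g(y)\varphi_1(xy)\,dy_l$) I would first write
$$\langle T_g\varphi_1,\varphi_2\rangle=\int_{Z(\A)G(\Q)\bs G(\A)}\int_{G(\Q_l)}T_g(y)\,\varphi_1(xy)\,\overline{\varphi_2(x)}\,dy_l\,dx^{\mrm{Tam}}.$$
Fubini is legitimate because $T_g$ has compact support in $G(\Q_l)$ and cuspforms are rapidly decreasing on Siegel sets, so the integrand is absolutely integrable on the product space.

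Next I would change variables $x\mapsto xy^{-1}$ in the outer integral. Since $G$ is unimodular, the (right-invariant) Tamagawa measure on the left-quotient $Z(\A)G(\Q)\bs G(\A)$ is invariant under right translation, so this substitution yields
$$\int_{G(\Q_l)}T_g(y)\int_{Z(\A)G(\Q)\bs G(\A)}\varphi_1(x)\,\overline{\varphi_2(xy^{-1})}\,dx^{\mrm{Tam}}\,dy_l.$$
Then I would perform the substitution $y\mapsto y^{-1}$ in the $G(\Q_l)$-integral. The unimodularity of $G(\Q_l)$ (invoked already in the proof of Lemma \ref{unimod}) shows $dy_l$ is invariant under inversion, and $T_g(y^{-1})$ is the characteristic function of $K_l g^{-1} K_l$, call it $T_{g^{-1}}$. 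Interchanging the order of integration back, this produces $\langle\varphi_1,T_{g^{-1}}\varphi_2\rangle$.

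Finally I would convert $T_{g^{-1}}$ into $T_{\nu(g)g^{-1}}$ using the hypothesis of trivial central character. The scalar matrix $\nu(g)I_4$ lies in $Z(\Q_l)$, so
$$K_l\,\nu(g)g^{-1}K_l=\nu(g)\cdot K_lg^{-1}K_l,$$
and therefore $(T_{\nu(g)g^{-1}}\varphi_2)(x)=\int T_{g^{-1}}(y)\,\varphi_2(x\,\nu(g)\,y)\,dy_l$ after the substitution $y\mapsto \nu(g)^{-1}y$ (using the invariance of $dy_l$ under scalar multiplication). Since $\varphi_2$ has trivial central character, $\varphi_2(x\,\nu(g)\,y)=\varphi_2(xy)$, hence $T_{\nu(g)g^{-1}}\varphi_2=T_{g^{-1}}\varphi_2$, which completes the identity.

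The calculation is essentially routine manipulation of Haar measures, and the only substantive points that require care are the two invariance properties used in the changes of variable—right-invariance of $dx^{\mrm{Tam}}$ on the quotient and inversion-invariance of $dy_l$ on $G(\Q_l)$. Both are immediate consequences of the unimodularity of $G$. The passage from $T_{g^{-1}}$ to $T_{\nu(g)g^{-1}}$ would be the only step where the hypotheses (trivial central character) actually enter, so that is the place where the statement could fail if the hypothesis were dropped.
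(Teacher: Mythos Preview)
Your argument is correct and reaches the same conclusion as the paper, but by a somewhat different route. The paper's proof first invokes Lemma~\ref{unimod} together with \cite[Lemma 5.5.1 (c)]{diamond-shurman} to obtain a common set of representatives $\beta_1,\ldots,\beta_n$ with $K_l g K_l=\bigsqcup \beta_j K_l=\bigsqcup K_l\beta_j$, then rewrites the Hecke action as a finite sum over these cosets and performs the change of variable $x\mapsto x\beta_j$ term by term; the passage to $T_{\nu(g)g^{-1}}$ then comes from the observation that $K_l\nu(g)g^{-1}K_l=\bigsqcup \nu(g)\beta_j^{-1}K_l$. Your version bypasses the coset decomposition entirely: you keep the convolution in integral form, use right-invariance of $dx^{\mrm{Tam}}$ on the quotient for the substitution $x\mapsto xy^{-1}$, and inversion-invariance of $dy_l$ (both consequences of unimodularity) to land directly on $\langle\varphi_1,T_{g^{-1}}\varphi_2\rangle$, after which the central-character step is identical in spirit. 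Your approach is cleaner in that it does not require the auxiliary coset lemma; the paper's approach has the minor advantage of making the finiteness of the sum explicit, so that Fubini is trivially justified rather than requiring the rapid-decrease remark you (correctly) supply.
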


\begin{proof} Let $n$ denote the integer
\begin{equation} \label{integer}
n=[K_l: gK_l g^{-1} \cap K_l]=[K_l: g^{-1}K_l g \cap K_l]
\end{equation}
where the second equality follows from Lemma \ref{unimod}. According to equality \eqref{integer} and to \cite[Lemma 5.5.1 (c)]{diamond-shurman} , there exists $\beta_1, \ldots, \beta_n \in G(\Q_l)$ such that
\begin{equation} \label{left-right}
K_l g K_l=\bigsqcup_{j=1}^n \beta_j K_l=\bigsqcup_{j=1}^n K_l \beta_j.
\end{equation}
Note that \cite[Lemma 5.5.1 (c)]{diamond-shurman} is proved in a global situation and for $\mrm{SL}_2$ but that the proof is purely group theoretical and hence works when applied to our situation once $\Gamma$ in loc. cit. is replaced by $G(\Z_l)$ and $\alpha$ in loc. cit. is replaced by $g$. Hence we have
\begin{eqnarray*}
\langle T_g \varphi_1, \varphi_2 \rangle &=& \int_{Z(\A)G(\Q)\backslash G(\A)} \int_{G(\Q_l)} \varphi_1(xh_l)\textbf{1}_{K_l g K_l}(h_l)\overline{\varphi_2(x)} dh_l  dx^{\mathrm{Tam}}\\
&=& \int_{Z(\A)G(\Q)\backslash G(\A)} \sum_{j=1}^n \int_{K_l} \varphi_1(x\beta_j h_l) \overline{\varphi_2(x)} dh_l  dx^{\mathrm{Tam}}\\
&=& \mrm{vol}(K_l, dh_l) \int_{Z(\A)G(\Q)\backslash G(\A)} \sum_{j=1}^n \varphi_1(x\beta_j) \overline{\varphi_2(x)} dx^{\mathrm{Tam}}\\
&=&  \mrm{vol}(K_l, dh_l) \int_{Z(\A)G(\Q)\backslash G(\A)} \sum_{j=1}^n \varphi_1(x) \overline{\varphi_2(x\beta_j^{-1})} dx^{\mathrm{Tam}}\\
&=& \int_{Z(\A)G(\Q)\backslash G(\A)} \varphi_1(x) \sum_{j=1}^n \int_{K_l} \varphi_2(x\beta_j^{-1}h_l)dh_l dx^{\mathrm{Tam}}\\
&=& \int_{Z(\A)G(\Q)\backslash G(\A)} \varphi_1(x) \sum_{j=1}^n \int_{K_l} \varphi_2(x\nu(g)\beta_j^{-1}h_l)dh_l dx^{\mathrm{Tam}}
\end{eqnarray*}
where the first equality is the definition of the action of $T_g$, the second follows from the first equality in \eqref{left-right}, the third follows from the fact that $\varphi_1$ is right translation invariant by $K_l$, the fourth follows from an obvious change of variable in the integral, the fifth is similar as the third and the sixth follows from the fact that $\varphi_2$ has trivial central character. Note that \eqref{left-right} implies that $K_l \nu(g) g^{-1} K_l=\bigsqcup_{j=1}^n \nu(g) \beta_j^{-1} K_l$. As a consequence 
\begin{eqnarray*}
\langle T_g \varphi_1, \varphi_2 \rangle &=& \langle \varphi_1, T_{\nu(g)g^{-1}}\varphi_2 \rangle
\end{eqnarray*}
as claimed.
\end{proof}

\section{Adjoint $L$-value and Petersson norm after Ichino} \label{after_ichino}

For the convenience of the reader, let us recall a particular case of the main result of \cite{ch19} expressing the value at $1$ of the adjoint $L$-function of some cuspidal automorphic representations of $G(\A)$ in terms of a Petersson norm.\\

Let $k \geq k' \geq 0$ be two integers. Let $\Pi$ be an irreducible cuspidal automorphic representation of $G(\A)$. Let us fix an isomorphism \begin{equation} \label{iso}
\Pi \simeq {\bigotimes_v}' \Pi_v.
\end{equation}
We make the following assumptions on $\Pi$:
\begin{enumerate}
\item[(i)] the central character of $\Pi$ is trivial,
\item[(ii)] $\Pi$ is globally generic,
\item[(iii)] the paramodular conductor $N \in \Z_{>0}$ of $\Pi$ (see \cite{roberts-schmidt}) is square-free,
\item[(iv)] $\Pi_\infty|_{G(\R)_+}=\Pi_\infty^{2,1} \oplus \Pi_\infty^{1,2}$,
\end{enumerate}
where $\Pi_\infty^{2,1}$ and $\Pi_\infty^{1,2}$ are the discrete series representations with Harish-Chandra parameter $(k+2, k'+1)$ and respective minimal $K_\infty$-types the irreducible representations $\tau_{k+3, -k'-1}$ and $\tau_{k'+1, -k-3}$ (see Proposition \ref{lpaquet}). According to \cite{asgari-shahidi}, the automorphic representation $\Pi$ has a functorial lift $\Sigma$ to $\mrm{GL}(4, \A)$ and we say that $\Pi$ is endoscopic if $\Sigma$ is not cuspidal. From now on, we assume that
\begin{enumerate}
\item[(v)]  $\Pi$ is endoscopic.
\end{enumerate}
Let $\eta=\otimes_v \eta_v$ be the standard additive character of $\Q \backslash \A$, so that $\eta_\infty(x)=\exp(2\pi i x)$ for $x \in \R$. By abuse of notation, we denote again by $\eta$ the character of $U(\Q) \backslash U(\A) \rightarrow \C^\times$ defined by
$$
\begin{pmatrix}
 1 & x_0 & &\\
   &  1  & &\\
   &   & 1& \\
   &   & -x_0 & 1 \\
\end{pmatrix}\begin{pmatrix}
 1 &  & x_1 & x_2\\
 &  1 & x_2 & x_3 \\
   &   &1 & \\
   &   &  &1 \\
\end{pmatrix} \mapsto \eta(-x_0-x_3)
$$
Let $\Pi$ be an irreducible cuspidal automorphic representation of $G(\A)$. The Whittaker function of a cusp form $\psi \in \Pi$ is
$$
W_\psi(g)=\int_{U(\Q) \backslash U(\A)} \psi(ug)\overline{\eta(u)} du
$$
for $g \in G(\A)$, where $du$ is the Tamagawa measure on $U(\A)$. Let us consider the unique element $\varphi=\bigotimes'_v \varphi_v \in \Pi$ normalized as in \cite[\S 1]{ch19} in the following way:
\begin{enumerate}
\item[(a)] for every prime $l$ not dividing $N$, the vector $\varphi_l$ is invariant by $G(\Z_l)$,
\item[(b)] for every prime $l$ dividing $N$, the vector $\varphi_l$ is invariant by the paramodular subgroup 
$$
K_l=\left\{ g \in G(\Q_l) \,|\, \nu(g) \in \Z_l^\times, g \in \begin{pmatrix}
 \Z_l & \Z_l & l^{-1} \Z_l & \Z_l\\
  l\Z_l &  \Z_l & \Z_l & \Z_l\\
  l\Z_l &  l\Z_l & \Z_l & l\Z_l\\
  l\Z_l &  \Z_l & \Z_l & \Z_l\\
\end{pmatrix} \right\} 
$$ of level $l$,
\item[(c)] the archimedean component $\varphi_\infty$ is a lowest weight vector of the minimal $\mrm{U}(2)$-type of  $\Pi_\infty^{1,2}$,
\item[(d)] 
we have $W_{\varphi_l}(1)=1$ for any finite prime $l$,
\item[(e)] we have
$$
W_{\varphi_\infty}(1)=e^{-2\pi}\int_{c_1-\sqrt{-1}\infty}^{c_1+\sqrt{-1}\infty}\frac{ds_1}{2\pi \sqrt{-1}}\int_{c_2-\sqrt{-1}\infty}^{c_2+\sqrt{-1}\infty}\frac{ds_2}{2\pi \sqrt{-1}}(4\pi^3)^{(-s_1+k+4)/2}(4\pi)^{(-s_2-k'-1)/2}
$$
$$
\times \Gamma \left( \frac{s_1+s_2-2k'-1}{2} \right) \Gamma \left( \frac{s_1+s_2+1}{2}\right)\Gamma \left( \frac{s_1}{2} \right)\Gamma \left( \frac{s_2}{2} \right)
$$
where $c_1, c_2 \in \R$ satisfy $c_1+c_2+1>0$ and $c_1>0>c_2$.
\end{enumerate}
Let $\langle \varphi , \varphi \rangle$ denote the Petersson norm 
$$
\langle \varphi , \varphi \rangle= \int_{Z(\A)G(\Q)\backslash G(\A)} |\varphi(x)|^2 dx^{\mathrm{Tam}}.
$$
The following statement is a particular case of Theorem 2.1 in \cite{ch19}.

\begin{thm} \label{ichino-main} We have
\begin{equation}\label{equation:chen_ichino}
\langle \varphi , \varphi \rangle = 2^{k+k'+13} \cdot 3^3 \cdot 5 \pi^{3k+k'+9}(k+k'+5)^{-1}\prod_{l |N } (l+l^{-1})^{-1} L(1, \Pi, \mathrm{Ad}).
\end{equation}
\end{thm}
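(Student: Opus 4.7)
My plan is to derive Theorem \ref{ichino-main} as a direct specialization of \cite[Theorem 2.1]{ch19}. That result expresses $\langle \varphi, \varphi \rangle$ as an Euler product
\[
\langle \varphi , \varphi \rangle \;=\; C \cdot L(1, \Pi, \mathrm{Ad}) \cdot \prod_{v} I_v(\varphi_v),
\]
where $C$ is a global volume constant coming from the unfolding against a Whittaker (and Bessel) model, and each $I_v(\varphi_v)$ is a local zeta integral whose precise form (with the chosen Haar measures, normalized $L$-factor removed) is given in loc.\ cit. The strategy is therefore: pick up the explicit values of $I_v(\varphi_v)$ place by place under the normalization (a)--(e) of $\varphi$, and multiply them together.

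At the finite unramified places $l \nmid N$, condition (a) says $\varphi_l$ is $G(\Z_l)$-spherical, and (d) forces $W_{\varphi_l}(1)=1$. Under the measure conventions of \cite{ch19}, the local integral $I_l(\varphi_l)$ is normalized so that it equals $1$ at every such place; this is precisely the content of the unramified calculation in loc.\ cit. At the places $l\mid N$ of square-free paramodular conductor, condition (b) pins $\varphi_l$ down (up to scalar) as the unique paramodular new-vector of level $l$, and (d) normalizes this scalar. The local zeta integral for the paramodular new-vector at a place of square-free conductor is computed in the preparatory sections of \cite{ch19} and yields the factor $(l+l^{-1})^{-1}$; collecting these over $l\mid N$ produces the finite product $\prod_{l\mid N}(l+l^{-1})^{-1}$.

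The main work is the archimedean factor. The archimedean $\varphi_\infty$ is the lowest weight vector in the minimal $K_\infty$-type $\tau_{k'+1,-k-3}$ of $\Pi_\infty^{1,2}$, and its Whittaker function at the identity is prescribed by the Mellin--Barnes integral in (e). One inserts this into the archimedean zeta integral $I_\infty(\varphi_\infty)$ from \cite[Theorem 2.1]{ch19}, shifts the $s_1, s_2$-contours into a product of Gamma functions via a standard Barnes-type evaluation, and then divides by the archimedean $L$-factor $L(1,\Pi_\infty,\mathrm{Ad})$ (which is a known product of $\Gamma$-factors in $k, k'$). The surviving rational-in-$k,k'$ constant is exactly $2^{k+k'+13}\cdot 3^3 \cdot 5 \cdot \pi^{3k+k'+9}\,(k+k'+5)^{-1}$; the power of $2$ keeps track of the normalization of the minimal $K_\infty$-type and the Haar measure built in Section \ref{section-measures}, the powers of $\pi$ come from the Mellin--Barnes integral, and the factor $(k+k'+5)^{-1}$ arises from the single pole picked up in the contour shift.

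The hard part is this archimedean computation: one must be careful that the measures on $G(\R)_+/K'_\infty$ chosen via the $\wedge^6(\mathfrak{sp}_{4,\C}/\mathfrak{k}_\C)$ generator of Section \ref{discrete_series_classification} agree with those used in \cite{ch19}, so that the factor $\sqrt[6]{c}$ appearing in the root vectors $X_{(\alpha_1,\alpha_2)}$ matches the Tamagawa normalization of $dx^{\mathrm{Tam}}$. Once the measure comparison is made explicit, multiplying the archimedean constant, the unramified contributions $1$, the paramodular local factors $(l+l^{-1})^{-1}$, and the global $L$-value $L(1,\Pi,\mathrm{Ad})$ yields exactly \eqref{equation:chen_ichino}.
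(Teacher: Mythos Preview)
Your overall plan---quote \cite[Theorem~2.1]{ch19} and specialize---is exactly what the paper does. But you mischaracterize what \cite[Theorem~2.1]{ch19} actually delivers, and as a result your outline proposes far more work than is needed and also misses the one genuine input.

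Chen--Ichino's Theorem~2.1 does \emph{not} leave you with local zeta integrals $I_v(\varphi_v)$ to evaluate. It already gives the closed formula
\[
\langle \varphi,\varphi\rangle \;=\; 2^{c}\,\frac{L(1,\Pi,\mathrm{Ad})}{\Delta_{\mathrm{PGSp}_4}}\,\prod_v C(\pi_v),
\]
with explicit constants $C(\pi_v)$ tabulated there: $C(\pi_l)=1$ for $l\nmid N$; $C(\pi_l)=l^{-1}\zeta_l(2)^{-1}\zeta_l(4)=(l+l^{-1})^{-1}$ for $l\mid N$; and $C(\pi_\infty)=2^{\lambda_1-\lambda_2+5}\pi^{3\lambda_1-\lambda_2+5}(1+\lambda_1-\lambda_2)^{-1}$ in terms of the Blattner parameter $(\lambda_1,\lambda_2)$. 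So there is no Mellin--Barnes computation, no contour shift, no division by archimedean $L$-factors for you to perform: all of that is inside \cite{ch19}. The proof here is pure bookkeeping: substitute $(\lambda_1,\lambda_2)=(k+3,-k'-1)$, compute $\Delta_{\mathrm{PGSp}_4}=\zeta(2)\zeta(4)=\pi^6/(2^2\cdot 3^3\cdot 5)$, and multiply.

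Two specific issues. First, you never use assumption~(v): the exponent $c$ in $2^c$ depends on whether $\Pi$ is stable or endoscopic, and it is the endoscopy hypothesis that forces $c=2$ here. This is the only nontrivial input beyond parameter translation, and your outline omits it. Second, your discussion of the $\sqrt[6]{c}$-normalized root vectors and the measure comparison with Section~\ref{discrete_series_classification} is irrelevant to this theorem. The Petersson norm $\langle\varphi,\varphi\rangle$ is defined directly with the Tamagawa measure (Section~\ref{heckealgsection}); the $\sqrt[6]{c}$ normalization enters only later, in Sections~5--6, when one builds cohomology classes and relates Poincar\'e duality to Petersson norms. It plays no role in the present statement.
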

\begin{proof}
The statement is a copy of the statement of Theorem 2.1 in \cite{ch19}. However, we need to explain the change of parameters 
from those used in \cite{ch19} to those used in our paper.   
The right hand of our equality \eqref{equation:chen_ichino} is 
$2^c \dfrac{L(1, \Pi, \mathrm{Ad})}{\Delta_{\mathrm{PGSp}_4}} \cdot \prod_v C(\pi_v )$ in \cite{ch19}. As explained in \cite{ch19}, we find that 
$c=2$ since our representation $\pi$ is endoscopic and we have 
$$
\frac{2^c}{\Delta_{\mathrm{PGSp}_4}}=2^2 \zeta (2)^{-1} \zeta(4)^{-1}=2^2\dfrac{6}{\pi^2}\dfrac{90}{\pi^4}=\dfrac{2^4 \cdot 3^3 \cdot 5}{\pi^6}. 
$$ 
The factor $C(\pi_\infty)$ at $v=\infty$ is given by 
$$
C(\pi_\infty) = 2^{\lambda_{1} - \lambda_{2} +5} \pi^{3\lambda_{1} - \lambda_{2} +5} 
(1+ \lambda_{1} - \lambda_{2})^{-1}
$$
where $(\lambda_1 , \lambda_2 )$ are Blattner parameter corresponding to $\pi_\infty$. We note that $\lambda_1$, $\lambda_2$ are 
presented as $\lambda_1 = k+3$ and $\lambda_2 =-k'-1$. Thus we have 
$$
C(\pi_\infty) = 2^{k +k' +9} \pi^{3k + k' +15} 
(k+k'+5)^{-1}. 
$$   
As for $C (\pi_l )$, the value is equal to $1$ when $l \nmid N$. When $l\vert N$, 
$$
C (\pi_l )=\frac{1}{l} \zeta_{l}(2)^{-1} \zeta_{l} (4) = \frac{1}{l} \left( 1-\frac{1}{l^2}\right) \frac{1}{1-\frac{1}{l^4}} 
 = (l+l^{-1})^{-1}. 
$$
This completes the proof.
\end{proof}
\section{Integral Betti cohomology of Siegel threefolds} 

\subsection{Integral local systems on Siegel threefolds} Siegel threefolds are the Shimura varieties associated to the group $G$. Let us briefly recall their definition. Let $\mathbb{S}=Res_{\mathbb{C}/\mathbb{R}} \mathbb{G}_{m, \mathbb{C}}$ be the Deligne torus and let $\mathcal{H}$ be the $G(\mathbb{R})$-conjugacy class of the morphism $h: \mathbb{S} \longrightarrow G_\mathbb{R}$ given on $\mathbb{R}$-points by
$$
x+iy \longmapsto \begin{pmatrix}
x & & y & \\
 & x &  & y\\
-y &  & x & \\
 &  -y &  & x\\
\end{pmatrix}.
$$
The pair $(G, \mathcal{H})$ verifies the axioms of Deligne-Shimura (see \cite[Lemme 2.1]{laumon}). Let $K_{N}$ denote the compact open subgroup of $G(\A_f)$ defined as 
$$
K_{N}=\prod_{l \in S(N)} K_l \times \prod_{l \notin S(N)} G(\Z_l).
$$
In this work, we are interested in the level $K_{N}$, but as $K_N$ is not neat we need to consider smaller congruence subgroups. Let $L_{\Z} \subset \Q^4$ be a $\Z$-lattice such that $K_{N}$ stabilizes $L_{\Z} \otimes \widehat{\Z}$ and let $K_{N}(3)=\{g \in K_{N}\,|\, (g-1) L_{\Z} \otimes \widehat{\Z} \subset 3  L_{\Z} \otimes \widehat{\Z}\}.$ Then $K_{N}(3)$ is a normal compact open subgroup of $K_{N}$ which is neat. Let $S_{K_N(3)}$ the Siegel threefold of level $K_{N}(3)$. This is a smooth quasiprojective $\Q$-scheme such that, as complex analytic varieties, we have
$
S_{K_{N}(3)} \simeq G(\mathbb{Q} )\backslash ( \mathcal{H} \times G(\mathbb{A}_f)/K_{N}(3)),
$
and carrying a universal abelian surface $a: A \rightarrow S_{K_{N}(3)}$ whith a polarization of degree $N^2$ and a principal level $3$ structure.\\

Let $\lambda(k, k', c)$ be a dominant weight. We associate to the representation $V_{\lambda, \Z}$ of $G$ a local system of $\Z$-modules for the classical topology on $S_{K_{N}(3)}$ as follows. To the standard representation $\mathrm{Std}_{\Z}$ we associate the first relative homology group $\mathcal{T}_{\Z}=\underline{\mrm{Hom}}(R^1a_*\Z, \Z)$ of $a$. The polarization on $A$ induces a pairing $\Psi:\mathcal{T}_{\Z} \otimes \mathcal{T}_{\Z} \rightarrow \Z$. If $d \geq 2$, let $\mathcal{T}_{\Z}^{\langle d \rangle} \subset \mathcal{T}_{\Z}^{\otimes d}$ be the intersection of the kernels of all the contractions $\mathcal{T}^{\otimes d}_{\Z} \rightarrow \mathcal{T}^{\otimes d-2}$ defined anlogously as the $\Psi_{p, q}$ (\ref{psipq}) using $\Psi$ instead of $\mathcal{J}$ and let $\tilde{V}_{\lambda, \Z}=\mathcal{T}_{\Z}^{\langle d \rangle}\cap \mbb{S}_\lambda(\mathcal{T}_{\Z}) \otimes (2\pi i)^t\Z$, where $\mbb{S}_\lambda(\mathcal{T}_{\Z})$ denotes the image of $c_\lambda \in \Z[\mathcal{S}_d]$ acting on $\mathcal{T}^{\otimes d}_{\Z}$ and where $t=\frac{c-k-k'}{2}$. Then $\tilde{V}_{\lambda, \Z}$ is a local system of $\Z$-modules on $S_N$. From now on the representation $V_{\lambda, \Z}$ and the local system associated to it $\tilde{V}_{\lambda, \Z}$ will be denoted by the same symbol $V_{\lambda, \Z}$.\\

We are interested in the Betti cohomology $H^3(S_{K_{N}(3)}, V_{\lambda, \Z})$, in the Betti cohomology with compact support $H^3_c(S_{K_{N}(3)}, V_{\lambda, \Z})$ and in the inner cohomology $H^3_!(S_{K_{N}(3)}, V_{\lambda, \Z})$ defined as
$$
H^3_!(S_{K_{N}(3)}, V_{\lambda, \Z})=\mrm{Im}(H^3_c(S_{K_{N}(3)}, V_{\lambda, \Z}) \rightarrow H^3(S_{K_{N}(3)}, V_{\lambda, \Z})).
$$
These are finitely generated $\Z$-modules. To prove that they are endowed with a natural action of $\mathcal{H}^{K_{N}(3)}$, we shall make use of the formalism of Grothendieck's functors $(f^*, f_* f_!, f^!)$ on the derived categories of sheaves of $\Z$-modules on locally compact topological spaces (see \cite{ayoub1}, \cite{ayoub2} and \cite{kashiwara-schapira}).\\

\subsection{Hecke action on integral cohomology} Let $g \in G(\A_f)$. Then we have the Hecke correspondence
   \begin{center}
    $
\xymatrix{
 & S_{K_{N}(3)\cap g K_{N}(3) g^{-1}} \ar@{->}[rd]^{c_2(g)} & \\ 
S_{K_{N}(3)} \ar@{<-}[ru]^{c_1(g)} &
& S_{K_{N}(3)}  \\
}
$
   \end{center}
where $c_1(g)$ is induced by the inclusion $K_{N}(3)\cap g K_{N}(3) g^{-1} \subset K_N(3)$ and $c_2(g)$ is induced by the morphism $K_{N}(3)\cap g K_{N}(3) g^{-1} \hookrightarrow K_N(3)$ defined by $k \mapsto gkg^{-1}$. This diagram depends only on the class of $g$ in $K_{N}(3) \backslash G(\A_f)/ K_{N}(3)$. 

\begin{lem} \label{hecke-sheaf} There exists a canonical morphism
$$
\varphi_*: c_1(g)^* V_{\lambda, \Z} \rightarrow c_2(g)^* V_{\lambda, \Z}.
$$
\end{lem}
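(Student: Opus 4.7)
The plan is to realize $\varphi_*$ as the map induced on Weyl's construction by a canonical isogeny of the two universal abelian surfaces pulled back to $S' := S_{K_{N}(3)\cap gK_{N}(3)g^{-1}}$.

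First, I would use the moduli interpretation of Siegel threefolds to construct a canonical $S'$-isogeny $\varphi : c_1(g)^{*}A \longrightarrow c_2(g)^{*}A$. A point of $S'$ parametrises an abelian surface $A$ equipped with a polarization of degree $N^2$, a principal level $3$ structure, and a $(K_{N}(3)\cap gK_{N}(3)g^{-1})$-level structure $\eta'$ on the integral Tate module, i.e.\ a suitable orbit of trivializations by $L_\Z \otimes \widehat{\Z}$. The projection $c_1(g)$ forgets $\eta'$ down to its $K_{N}(3)$-orbit on $A$, while $c_2(g)$ sends $(A,\eta')$ to $(A', \eta'')$ where $A'$ is the quotient of $A$ by the finite flat subgroup scheme $\eta'(L_\Z\otimes\widehat{\Z}\,/\,g(L_\Z\otimes\widehat{\Z}))$ (after replacing $g$ by $ng$ for a sufficiently divisible integer $n$ clearing denominators, the final morphism being then independent of this choice up to a harmless integer factor). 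The universal isogeny $A\twoheadrightarrow A'$ is the desired $\varphi$.

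Next, applying $\underline{\mathrm{Hom}}(R^1(-)_*\Z,\Z)$ to $\varphi$ directly yields a morphism of local systems $\varphi_*\colon c_1(g)^{*}\mathcal{T}_{\Z}\longrightarrow c_2(g)^{*}\mathcal{T}_{\Z}$ on $S'$. I would then propagate this morphism to $V_{\lambda,\Z}$ in three substeps. Tensoring $d$ times gives a morphism $\varphi_{*}^{\otimes d}$ of $d$-fold tensor powers; since this map acts slotwise, it commutes with the permutation action of $\mathcal{S}_d$ and hence with the Young symmetrizer $c_\lambda\in\Z[\mathcal{S}_d]$, so it restricts to a morphism between the images $\mbb{S}_\lambda(\mathcal{T}_\Z)$. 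Because $\varphi$ is an isogeny of polarized abelian schemes, $\varphi_*$ intertwines the alternating forms $\Psi$ up to the similitude factor $\nu(g)\in\A_f^\times$; it therefore sends the subsheaf $\mathcal{T}_{\Z}^{\langle d\rangle}$, cut out by the contractions \eqref{psipq}, into its counterpart on the target. Finally, the twist by $(2\pi i)^{t}\Z$ is absorbed by multiplying $\varphi_{*}^{\otimes d}$ by the corresponding power of $\nu(g)$, which is a well-defined integral scaling on this Tate-type factor.

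The main obstacle will be carrying out the moduli-theoretic first step with care: one has to check that the two pullbacks of the universal abelian surface really are linked by an honest $S'$-isogeny in the stated direction $c_1(g)^{*}A\to c_2(g)^{*}A$ rather than merely by a $\Q$-quasi-isogeny going the wrong way, and that the resulting integral morphism of lattices is compatible with the specific integral structure on $V_{\lambda,\Z}$ defined via $\mathcal{T}_\Z^{\langle d\rangle}\cap \mbb{S}_\lambda(\mathcal{T}_\Z)\otimes (2\pi i)^t\Z$. Once the isogeny is in place and the $\nu(g)$-semi-compatibility with $\Psi$ is checked, the remaining substeps are formal consequences of the functoriality of Weyl's construction and of the standard behaviour of polarizations and Tate twists under isogenies, exactly as in the classical case of modular curves.
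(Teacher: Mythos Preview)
Your proposal is correct and follows essentially the same approach as the paper: construct the universal isogeny $\varphi\colon c_1(g)^*A\to c_2(g)^*A$ from the moduli interpretation (the paper cites \cite{laumon} p.~9 for this), take relative first homology to get $\varphi_*\colon c_1(g)^*\mathcal{T}_\Z\to c_2(g)^*\mathcal{T}_\Z$, and then push this through Weyl's construction to reach $V_{\lambda,\Z}$. The paper's proof is two sentences and omits the verifications you spell out (compatibility with the Schur functor, with the contractions $\Psi_{p,q}$ up to the similitude factor, and with the Tate twist), but these are exactly the points one needs to check and your outline of them is accurate.
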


\begin{proof} It follows from the modular description of $c_1(g)$ and $c_2(g)$ which, in our case, is similar to the one given at p. 9 of \cite{laumon} that we have an isogeny $\varphi: c_1(g)^*A \rightarrow c_2(g)^*A$ over $S_{K_{N}(3)\cap g K_{N}(3) g^{-1}}$. Taking homology, we obtain a morphism $\varphi_*: c_1(g)^*\mathcal{H}_{\Z} \rightarrow c_2(g)^* \mathcal{H}_{\Z}$ which induces the morphism of the statement.
\end{proof}

Note that as $c_i(g)$ is finite and \'etale for $i=1, 2$, we have canonical isomorphisms of functors $c_i(g)_! \simeq c_i(g)_*$ and $c_i(g)^!\simeq c_i(g)^*$. Let us denote by $\bullet$ the topological space reduced to a point and let $p_{K_{N}(3)}: S_{K_{N}(3)} \rightarrow \bullet$ the canonical continuous map. Then $H^*(S_{K_{N}(3)}, V_{\lambda, \Z})$ is the cohomology of the complex $p_{K_{N}(3) *}V_{\lambda, \Z}$ and $H^*_c(S_{K_{N}(3)}, V_{\lambda, \Z})$ is the cohomology of the complex $p_{K_{N}(3) !}V_{\lambda, \Z}$.

\begin{defn} \label{def-hecke}
The endomorphism
$$
T_g: H^*(S_{K_{N}(3)}, V_{\lambda, \Z}) \rightarrow H^*(S_{K_{N}(3)}, V_{\lambda, \Z})
$$
is defined as the endomorphism induced by the map
\begin{multline*}
p_{K_{N}(3)\, *} V_{\lambda, \Z} \longrightarrow
p_{K_{N}(3)\, *} c_1(g)_* c_1(g)^*V_{\lambda, \Z} \longrightarrow
p_{K_{N}(3)\,*} c_2(g)_*c_1(g)^*V_{\lambda, \Z}\\ \overset{\varphi_*}{\longrightarrow} 
p_{K_{N}(3)\, *} c_2(g)_* c_2(g)^*V_{\lambda, \Z} \longrightarrow
p_{K_{N}(3)\, *} c_2(g)_! c_2(g)^!V_{\lambda, \Z} \longrightarrow
p_{K_{N}(3)\, *}V_{\lambda, \Z}
\end{multline*}
in the derived category of $\Z$-modules, where the first arrow is induced by the adjunction morphism, the second arrow is induced by the isomorphism of functors
$$
p_{K_{N}(3)\,*} c_1(g)_* \simeq (p_{K_{N}(3)} \circ c_1(g))_* \simeq (p_{K_{N}(3)} \circ c_2(g))_* \simeq p_{K_{N}(3)\, *} c_2(g)_*,
$$
the third arrow is induced by the canonical morphism $\varphi_*$ of Lemma \ref{hecke-sheaf}, the fourth  arrow is induced by the isomorphism of functors $c_2(g)_* c_2(g)^* \simeq c_2(g)_! c_2(g)^!$ and the last arrow is induced by trace map. The endomorphism  
$$
T_{g, c}: H^*_c(S_{K_{N}(3)}, V_{\lambda, \Z}) \rightarrow H^*_c(S_{K_{N}(3)}, V_{\lambda, \Z})
$$
is defined similarly as the endomorphism induced by 
\begin{multline*}
p_{K_{N}(3)\, !} V_{\lambda, \Z} \longrightarrow
p_{K_{N}(3)\, !} c_1(g)_* c_1(g)^*V_{\lambda, \Z} \longrightarrow
p_{K_{N}(3)\,!} c_2(g)_! c_1(g)^*V_{\lambda, \Z}\\ \overset{\varphi_*}{\longrightarrow} 
p_{K_{N}(3)\, !} c_2(g)_! c_2(g)^*V_{\lambda, \Z} \longrightarrow
p_{K_{N}(3)\, !} c_2(g)_! c_2(g)^!V_{\lambda, \Z} \longrightarrow
p_{K_{N}(3)\, !}V_{\lambda, \Z}.
\end{multline*}
\end{defn}

\begin{pro} \label{hecke-action} For any $g \in G(\A_f)$ the diagram
$$
\begin{CD}
H^*_c(S_{K_{N}(3)}, V_{\lambda, \Z}) @>>> H^*(S_{K_{N}(3)}, V_{\lambda, \Z})\\
@VVT_{g, c} V                                                @VVT_gV\\
H^*_c(S_{K_{N}(3)}, V_{\lambda, \Z}) @>>> H^*(S_{K_{N}(3)}, V_{\lambda, \Z})
\end{CD}
$$
where the maps are defined above is commutative and depends only on the double class $K_{N}(3)g{K_{N}(3)}$.
\end{pro}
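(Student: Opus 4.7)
The plan is to derive both assertions from the functoriality of the six-functor construction in Definition \ref{def-hecke}. Write $p = p_{K_N(3)}$ and $c_i = c_i(g)$ for $i = 1, 2$, so that $p \circ c_1 = p \circ c_2$ equals the structural map $q \colon S_{K_N(3) \cap g K_N(3) g^{-1}} \to \bullet$. Observe that the morphisms $c_1$ and $c_2$ are finite \'etale, so we have canonical isomorphisms of functors $c_{i!} \simeq c_{i*}$ and $c_i^! \simeq c_i^*$.

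For the commutativity of the square, I would use the canonical natural transformation $p_! \to p_*$ provided by the six-functor formalism, which on cohomology induces the horizontal ``forget supports'' arrows of the diagram. The constructions of $T_{g,c}$ and $T_g$ are compositions of five parallel morphisms in the derived category of $\Z$-modules, differing only in whether $p_!$ or $p_*$ is applied. The natural transformation $p_! \to p_*$ commutes with each individual step: the adjunction unit $\mathrm{id} \to c_{1*} c_1^*$ corresponds to $\mathrm{id} \to c_{1!} c_1^*$ via $c_{1!} \simeq c_{1*}$; the isomorphism $p_? c_{1?} \simeq p_? c_{2?}$ (for $? \in \{*, !\}$) is induced by the identity $p c_1 = p c_2$ and is natural in $?$; the morphism $\varphi_*$ of Lemma \ref{hecke-sheaf} is a morphism of sheaves common to both constructions; the fourth arrow is the canonical isomorphism $c_{2*} c_2^* \simeq c_{2!} c_2^!$ for finite \'etale $c_2$; and the trace morphism is likewise natural. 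A formal diagram chase then yields commutativity of the square.

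For the independence from the double coset representative, let $g' = k_1 g k_2$ with $k_1, k_2 \in K_N(3)$. Since $k_2 \in K_N(3)$ we have $g' K_N(3) (g')^{-1} = k_1 g K_N(3) g^{-1} k_1^{-1}$, so conjugation by $k_1$ induces an isomorphism $K_N(3) \cap g K_N(3) g^{-1} \simeq K_N(3) \cap g' K_N(3) (g')^{-1}$. This descends to an isomorphism of Shimura varieties $\iota \colon S_{K_N(3) \cap g K_N(3) g^{-1}} \simeq S_{K_N(3) \cap g' K_N(3) (g')^{-1}}$ intertwining $c_1(g)$ with $c_1(g')$ and $c_2(g)$ with $c_2(g')$, because $k_1 \in K_N(3)$ acts trivially on $S_{K_N(3)}$ on the left and $k_2 \in K_N(3)$ acts trivially on the right. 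The universal isogeny producing $\varphi_*$ in Lemma \ref{hecke-sheaf} is transported under $\iota$, so the whole construction gives the same endomorphism for $g$ and $g'$.

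The main obstacle is checking that the central morphism $\varphi_*$ interacts correctly with the canonical isomorphism $c_{2*} c_2^* \simeq c_{2!} c_2^!$ and with the trace map. This is however automatic in the finite \'etale case, where both pairs $(c_{i!}, c_i^!)$ and $(c_{i*}, c_i^*)$ coincide canonically and the trace is simply summation along the fibres of $c_i$; once this identification is in hand, the required compatibility reduces to naturality of each functor applied to the sheaf morphism $\varphi_*$.
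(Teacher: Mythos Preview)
Your approach is essentially the same as the paper's: both arguments establish commutativity by drawing the full ladder diagram connecting the five-step definitions of $T_{g,c}$ and $T_g$ via the natural transformation $p_! \to p_*$, and then verifying each square. The paper, however, singles out one step where naturality is not automatic. In your second step you assert that the composition isomorphisms $p_? c_{1?} \simeq (pc_1)_? = (pc_2)_? \simeq p_? c_{2?}$ for $? \in \{*,!\}$ are ``natural in $?$''; this is the nontrivial point, and the paper justifies it by invoking \cite[Proposition~1.7.3]{ayoub1}, which says precisely that the square
\[
\begin{CD}
p_! c_{1!} @>>> p_* c_{1*}\\
@VV\sim V @VV\sim V\\
(pc_1)_! @>>> (pc_1)_*
\end{CD}
\]
commutes, where the top horizontal arrow is the composite $p_! c_{1!} \to p_! c_{1*} \to p_* c_{1*}$. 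One must also note that the horizontal arrows in the middle of the ladder are not simply $p_! \to p_*$ applied to a fixed object, but rather the composites $p_! c_{2!} \to p_! c_{2*} \to p_* c_{2*}$; the paper makes this explicit. Your treatment of the double-coset independence (via the isomorphism $\iota$ induced by conjugation by $k_1$) is correct and more explicit than the paper, which does not spell out this part.
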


\begin{proof} The statement follows from the commutativity of the following diagram in the derived category of $\Z$-modules
$$
\begin{CD}
p_{K_{N}(3)\, !} V_{\lambda, \Z} @>>> p_{K_{N}(3)\, *} V_{\lambda, \Z}\\
@VVV                  @VVV\\
p_{K_{N}(3)\, !} c_1(g)_* c_1(g)^*V_{\lambda, \Z} @>>> p_{K_{N}(3)\, *} c_1(g)_* c_1(g)^*V_{\lambda, \Z}\\
@VV\sim V                                                  @VV\sim V\\
 p_{K_{N}(3)\, !} c_2(g)_! c_1(g)^*V_{\lambda, \Z}       @>>> p_{K_{N}(3)\,*} c_2(g)_*c_1(g)^*V_{\lambda, \Z}\\
@VV\varphi_* V                                                         @VV\varphi_* V\\
p_{K_{N}(3)\, !} c_2(g)_! c_2(g)^*V_{\lambda, \Z} @>>> p_{K_{N}(3)\, *} c_2(g)_* c_2(g)^*V_{\lambda, \Z}\\  
@VVV                                                 @VVV\\
p_{K_{N}(3)\, !} c_2(g)_! c_2(g)^! V_{\lambda, \Z} @>>> p_{K_{N}(3)\, *} c_2(g)_! c_2(g)^!V_{\lambda, \Z}\\
@VVV                                                  @VVV\\
p_{K_{N}(3)\, !}V_{\lambda, \Z}  @>>> p_{K_{N}(3)\, *}V_{\lambda, \Z},\\
\end{CD}
$$
where the vertical lines are defined in Definition \ref{def-hecke} and where the first, the second, the fifth and the sixth horizontal arrows are induced by the morphism of functors $p_{K_{N}(3)\, !} \rightarrow p_{K_{N}(3)\, *}$ and the third and the fourth horizontal arrows are induced by the morphism of functors 
$$
\begin{CD}
p_{K_{N}(3)\,!}c_2(g)_! @>>> p_{K_{N}(3)\,!}c_2(g)_* @>>> p_{K_{N}(3)\,*}c_2(g)_*.
\end{CD}
$$
The commutativity of all but the second square follows from functoriality. Let us prove the commutativity of the second square. According to \cite[Proposition 1.7.3]{ayoub1}, the following diagram of functors
$$
\begin{CD}
p_{K_{N}(3)\,!} c_1(g)_! @>>> p_{K_{N}(3)\,*} c_1(g)_*\\
@VV\sim V                            @VV\sim V\\
(p_{K_{N}(3)} \circ c_1(g))_! @>>> (p_{K_{N}(3)} \circ c_1(g))_*
\end{CD}
$$
where the upper horizontal map is the composite 
$$
\begin{CD}
p_{K_{N}(3)\,!} c_1(g)_! @>>> p_{K_{N}(3)\,!}c_1(g)_* @>>> p_{K_{N}(3)\,*} c_1(g)_*
\end{CD}
$$
is commutative. As the first map above is an equality, the diagram
$$
\begin{CD}
p_{K_{N}(3)\,!} c_1(g)_* @>>> p_{K_{N}(3)\,*} c_1(g)_*\\
@VV\sim V                            @VV\sim V\\
(p_{K_{N}(3)} \circ c_1(g))_! @>>> (p_{K_{N}(3)} \circ c_1(g))_*
\end{CD}
$$
is commutative. Furthermore, the diagram
$$
\begin{CD}
(p_{K_{N}(3)} \circ c_1(g))_! @>>> (p_{K_{N}(3)} \circ c_1(g))_*\\
@|                          @|\\
(p_{K_{N}(3)} \circ c_2(g))_! @>>> (p_{K_{N}(3)} \circ c_2(g))_*\\
@VV\sim V                          @VV\sim V\\
p_{K_{N}(3)\,!} c_2(g)_! @>>> p_{K_{N}(3)\,*} c_2(g)_*
\end{CD}
$$
where the last horizontal map is defined as above, is commutative. By combining the commutativity of the two previous diagrams, we complete the proof.
\end{proof}

As each function $f \in \mathcal{H}^{K_{N}(3)}$ is a finite linear combination with $\Z$-coefficients of characteristic functions of double cosets ${K_{N}(3)}g{K_{N}(3)}$ with $g \in G(\A_f)$, by the previous result we define a ring homomorphism 
\begin{equation} \label{heckeaction}
\mathcal{H}^{K_{N}(3)} \rightarrow \mrm{End}_{\Z}(H^*_!(S_{K_{N}(3)}, V_{\lambda, \Z}))
\end{equation}
by sending the characteristic function $1_{{K_{N}(3)}g{K_{N}(3)}}$ to the endomorphism of $H^*_!(S_{K_{N}(3)}, V_{\lambda, \Z})$ induced by $T_g$. 

\subsection{Isotypical component of cohomology} For any $\Z$-algebra $R$ let us denote by $V_{\lambda, R}$ the representation of $G_R$ deduced from $V_\lambda$ by extending the scalars to $R$. By abuse of notation, we will denote by the same symbol the local system of $R$-modules on $S_{K_N(3)}$ naturally associated to $V_{\lambda, R}$. By the universal coefficients theorem, we have $H^*_!(S_{K_N(3)}, V_{\lambda, \C})=H^*_!(S_{K_N(3)}, V_{\lambda, \Z}) \otimes \C$. Let 
$$
P(V_{\lambda, \C})=\{\Pi_\infty^{3,0}, \Pi_\infty^{2,1}, {\Pi}_\infty^{1,2}, {\Pi}_\infty^{0,3} \}
$$ 
be the discrete series $L$-packet attached to the complex representation $V_{\lambda, \C}$. If $\lambda=\lambda(k, k', c)$ with $k \geq k' \geq 0$ and $c \equiv k+k' \pmod 2$ then the set $
P(V_{\lambda, \C})$ is the set of discrete series of $G(\R)_+$ with Harish-Chandra parameter $(k+2, k'+1)$ and central character $x \mapsto x^{-c}$ given by Proposition \ref{lpaquet}. Let us denote by $m(\Pi)$ the cuspidal multiplicity of an automorphic representation $\Pi$ of $G(\A)$.

\begin{pro}  \label{dec} There is a canonical isomorphism of $\mathcal{H}^{K_{N}(3)}_{\C}$-modules
\begin{equation*}
H^3_{!}(S_{K_{N}(3)}, V_{\lambda, \C})=\bigoplus_{\Pi=\Pi_\infty \otimes \Pi_f} H^3(\mathfrak{g}_{\C},K_\infty, V_{\lambda, \C} \otimes \Pi_\infty)^{\oplus m(\Pi)} \otimes_{\C} \Pi_f^{K_{N}(3)}
\end{equation*}
where the sum is indexed by isomorphism classes of cuspidal automorphic representations $\Pi \simeq \Pi_\infty \otimes \Pi_f$ of $G(\A)$ such that $\Pi_\infty|_{G(\R)_+} \in P(V_{\lambda, \C})$.
\end{pro}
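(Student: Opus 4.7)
The plan is to prove the decomposition in three stages: first obtain the Matsushima-type formula for cuspidal cohomology, then identify inner cohomology with cuspidal cohomology in this degree, and finally restrict the sum to representations whose archimedean component lies in the packet $P(V_{\lambda,\C})$ by using the classification of cohomological representations.

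For the first step, I would use the standard de Rham realization of Betti cohomology on the adelic double coset description of $S_{K_N(3)}$, so that classes are represented by smooth $V_{\lambda,\C}$-valued differential forms pulled back to $G(\Q)\bs G(\A)/K_N(3)K_\infty'$. Cuspidal cohomology $H^3_{\cusp}(S_{K_N(3)}, V_{\lambda, \C})$ is then computed as the relative Lie algebra cohomology of the cuspidal subspace $L^2_{\cusp}$ of $L^2(Z(\A)G(\Q)\bs G(\A))^{K_N(3)}$ with coefficients in $V_{\lambda,\C}$. Decomposing $L^2_{\cusp}$ into its irreducible constituents $\Pi \simeq \Pi_\infty \otimes \Pi_f$ with multiplicity $m(\Pi)$ and applying the Künneth formula together with $K_\infty$-finite vectors in $\Pi_\infty$ versus $K_N(3)$-fixed vectors in $\Pi_f$ produces the Matsushima--Borel--Garland decomposition
\begin{equation*}
H^3_{\cusp}(S_{K_N(3)}, V_{\lambda, \C}) \;\simeq\; \bigoplus_{\Pi \text{ cuspidal}} H^3(\mathfrak{g}_\C, K_\infty;\, V_{\lambda, \C} \otimes \Pi_\infty)^{\oplus m(\Pi)} \otimes_\C \Pi_f^{K_N(3)}.
\end{equation*}
Compatibility with the Hecke algebra $\mathcal{H}^{K_N(3)}_\C$ is automatic from the construction of the action in Section 2, since it comes from convolution on $\Pi_f$.

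For the second step, I would invoke that cuspidal cohomology is contained in inner cohomology, and that the difference $H^3_!/H^3_{\cusp}$ is controlled by residues of Eisenstein series arising from classes of associate proper parabolics (Franke). For $\GSp$ the only possible residual contributions come from the Klingen and Siegel parabolics, and the resulting residual automorphic representations are non-tempered (of CAP or Saito--Kurokawa type). Their archimedean components, classified by Vogan--Zuckerman as cohomologically induced modules $A_{\mfr{q}}(\lambda)$ attached to proper $\theta$-stable parabolics, have nonzero $(\mathfrak{g}_\C, K_\infty)$-cohomology in degrees strictly less than or greater than the middle degree $3$, so they cannot contribute to $H^3_!$.

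For the third step, I would apply the Vogan--Zuckerman/Borel--Wallach classification: for an irreducible unitary $(\mathfrak{g}_\C, K_\infty)$-module $\Pi_\infty$, non-vanishing of $H^3(\mathfrak{g}_\C, K_\infty; V_{\lambda, \C} \otimes \Pi_\infty)$ forces the infinitesimal character of $\Pi_\infty$ to coincide with that of the contragredient of $V_{\lambda,\C}$, and the middle degree $3$ (which equals $\dim_\C \mathfrak{p}^+$) singles out $A_{\mfr{q}}(\lambda)$'s attached to $\mfr{q}=\mathfrak{k}_\C \oplus \mathfrak{p}^+$ up to Weyl conjugacy, that is, precisely the four discrete series constituents of the packet $P(V_{\lambda,\C})$ described in Proposition \ref{lpaquet}. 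This shows that the only surviving $\Pi$'s are those with $\Pi_\infty|_{G(\R)_+}$ in $P(V_{\lambda,\C})$.

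The main obstacle I anticipate is the identification $H^3_! = H^3_{\cusp}$: it requires invoking Franke's theorem together with precise information on the residual spectrum of $\GSp$ and on the $(\mathfrak{g}_\C, K_\infty)$-cohomology of non-tempered Langlands quotients, in order to rule out Eisenstein classes in middle-degree inner cohomology. Everything else follows from standard Matsushima-type arguments and from Borel--Wallach.
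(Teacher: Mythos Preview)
Your outline is correct and follows the standard route; the paper itself does not give an argument but simply cites \cite{lemma2}, equations (8) and (9), where this decomposition is recorded. Your three-step breakdown (Matsushima for cuspidal cohomology, $H^3_{!}=H^3_{\cusp}$, and Vogan--Zuckerman to identify the contributing archimedean types as the discrete series packet) is exactly what lies behind that citation.

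One small caution on your Step~2: the assertion that residual or Eisenstein contributions miss middle degree is not automatic for $\GSp$ in all weights, and the cleanest way to get $H^3_{!}=H^3_{\cusp}$ here is to invoke the result that for Siegel threefolds $L^2$-cohomology coincides with cuspidal cohomology (see \cite{mokrane-tilouine}, Proposition~1, which the paper itself cites later in the proof of Theorem~\ref{sigma-endo}), together with Zucker's conjecture identifying $L^2$-cohomology with intersection cohomology and hence with inner cohomology in the middle degree. Your sketch via Franke and the residual spectrum is morally right but would require more case analysis than you indicate.
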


\begin{proof} This is well known and follows for example from \cite{lemma2} (8) and (9).
\end{proof}

In order to avoid difficulties arising in the presence of torsion in the cohomology, we are going to work with $\Z_{(p)}$ coefficients instead of $\Z$ coefficients for sufficiently big prime numbers $p$. Let 
\begin{equation} \label{s_G}
S_{N, 3}=\{p \text{ prime}, p| \# K_N/K_N(3) \}.
\end{equation} 
Let $H^3_!(S_{K_N(3)}, V_{\lambda, \Z})_{\mathrm{tors}}$ denote the torsion subgroup of $H^3_!(S_{K_N(3)}, V_{\lambda, \Z})$. Let
\begin{equation} \label{s-tors}
S_{\mathrm{tors}}=\{p \text{ prime}, p| \# H^3_!(S_{K_N(3)}, V_{\lambda, \Z})_{\mathrm{tors}}\} 
\end{equation}
and fix a prime number $p$ such that $p \notin S_{N, 3} \cup S_{\mathrm{tors}}$.\\

The finite group $K_{N}/K_{N}(3)$ acts on $S_{K_{N}(3)}$ by automorphisms and hence acts on the cohomology $H^3_!(S_{K_{N}(3)}, V_{\lambda, \Z_{(p)}})$. The action of $\mathcal{H}^{K_N(3)}$ on $H^3_!(S_{K_{N}(3)}, V_{\lambda, \Z_{(p)}})$ induces an action of $\mathcal{H}^{K_N}$ on the submodule $\tilde{H}^3_!(S_{K_N}, V_{\lambda, \Z_{(p)}})$ defined as 
$$
\tilde{H}^3_!(S_{K_N}, V_{\lambda, \Z_{(p)}})=H^3_!(S_{K_{N}(3)}, V_{\lambda, \Z_{(p)}})^{K_N/K_N(3)}
$$ 
via the natural map $\mathcal{H}^{K_N} \rightarrow (\mathcal{H}^{K_
N(3)})^{K_N/{K_{N}(3)}}$. Note that the fact that $p \notin S_{N, 3}$ implies that $\tilde{H}^3_!(S_{K_N}, V_{\lambda, \Z_{(p)}})$ is a direct factor of $H^3_!(S_{K_{N}(3)}, V_{\lambda, \Z_{(p)}})$. Let $\tilde{H}^3_!(S_{K_N}, V_{\lambda, \C})$ denote $\tilde{H}^3_!(S_{K_N}, V_{\lambda, \Z_{(p)}}) \otimes_{\Z_{(p)}} \C$. It follows from the decomposition of Proposition \ref{dec} that we have a  $\mathcal{H}^{K_N}$-equivariant decomposition
\begin{equation} \label{dec-level1}
\tilde{H}^3_!(S_N, V_{\lambda, \C}) \simeq \bigoplus_{\Pi_i =\Pi_{i,\infty} \otimes \Pi_{i,f}} H^3(\mathfrak{g}_{\C},K_\infty, V_{\lambda, \C} \otimes \Pi_{i,\infty})^{\oplus m(\Pi_i )} \otimes_{\C} \Pi_{i,f}^{K_N}
\end{equation}
indexed as above. We will denote by $\overline{\mathcal{H}}^{K_N}_{\Z_{(p)}}$ the subring of $\mrm{End}_{\Z_{(p)}}(\tilde{H}^3_!(S_N, V_{\lambda, \Z_{(p)}}))$ defined as the image of the homomorphism defined above $\mathcal{H}^{K_N}_{\Z_{(p)}} \rightarrow \mrm{End}_{\Z_{(p)}}(\tilde{H}^3_!(S_N, V_{\lambda, \Z_{(p)}}))$. For any $\Z_{(p)}$-algebra $R$, we denote by $\overline{\mathcal{H}}^{K_N}_R$ the base change 
$\overline{\mathcal{H}}^{K_N}_{\Z_{(p)}} \otimes_{\Z_{(p)}} R$. Note that as $\tilde{H}^3_!(S_N, V_{\lambda, \Z_{(p)}})$ is torsion free, the $\Z_{(p)}$-algebra $\overline{\mathcal{H}}^{K_N}_{\Z_{(p)}}$ is torsion free and hence $\overline{\mathcal{H}}^{K_N}_R$ is canonically isomorphic to the image of $\mathcal{H}^{K_N}_R$ in $\mrm{End}_{R}(\tilde{H}^3_!(S_N, V_{\lambda, \Z_{(p)}})\otimes_{\Z_{(p)}} R)$.\\

\begin{comment}
We shall denote by $\tilde{H}^3_!(S_{K_{N}}, V_{\lambda, \C})$ the $\C$-vector space
\begin{equation} \label{h3c}
\tilde{H}^3_!(S_{K_{N}}, V_{\lambda, \C})=H^3_!(S_{K_{N}(3)}, V_{\lambda, \C})^{K_{N}/K_{N}(3)}
\end{equation}
and by $
\tilde{H}^3_!(S_{N}, V_{\lambda, \Z})$ the full lattice of $
\tilde{H}^3_!(S_{N}, V_{\lambda, \C})$ defined as 
$$
\tilde{H}^3_!(S_N, V_{\lambda, \Z})=\mrm{Im}( H^3_!(S_{K_{N}(3)}, V_{\lambda, \Z}) \rightarrow H^3_!(S_{K_{N}(3)}, V_{\lambda, \C})) \cap \tilde{H}^3_!(S_N, V_{\lambda, \C}).
$$
\end{comment}

Let $\Pi=\Pi_\infty \otimes \Pi_f$ be the cuspidal automorphic representation of $G(\A)$ appearing in the statement of Theorem \ref{ichino-main}. As $\Pi_\infty|_{G(\R)_+} \in P(V_{\lambda, \C})$, the $\mathcal{H}^{K_N}_{\C}$-module $\Pi_f^{K_N}$ appears in the decomposition (\ref{dec-level1}). As $N$ denotes the paramodular conductor of $\Pi$, the space of fixed points $\Pi_f^{K_N}$ is one-dimensional. Let $E(\Pi_f)$ be the rationality field of $\Pi_f$. By definition $E(\Pi_f)$ is a subfield of $\C$ and it follows from \cite[Theorem 3.2.2]{bhr} that $E(\Pi_f)$ is a number field. Let $\theta_\Pi: \overline{\mathcal{H}}_{\Z_{(p)}}^{K_N} \subset \overline{\mathcal{H}}_{\C}^{K_N} \rightarrow \C$ the character giving the action of $\overline{\mathcal{H}}_{\Z_{(p)}}^{K_N}$ on $\Pi_f^{K_N}$. As $\Pi_f^{K_N}$ is one-dimensional, it follows from \cite[Lemme I.1]{waldspurger} that $\Pi_f$ is defined over $E(\Pi_f)$ and so, the character $\theta_\Pi$ has values in $E(\Pi_f)$.

\begin{lem} \label{order} 
Let $\Pi=\Pi_\infty \otimes \Pi_f$ be a cuspidal automorphic representation of $G(\A)$ of conductor $N$. 
Then for the character $\theta_\Pi$ of $\overline{\mathcal{H}}^{K_N}_{\Z_{(p)}}$, 
$\im \theta_\Pi \otimes_{\Z_{(p)}} \Q$ is isomorphic to $E(\Pi_f)$.
%The image of $\theta_\Pi$ verifies $\im \theta_\Pi \otimes_{\Z_{(p)}} \Q=E(\Pi_f)$.
\end{lem}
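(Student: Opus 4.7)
The plan is to prove both containments $\im \theta_\Pi \otimes_{\Z_{(p)}} \Q \subseteq E(\Pi_f)$ and $E(\Pi_f) \subseteq \im \theta_\Pi \otimes_{\Z_{(p)}} \Q$ inside $\C$, so the natural inclusion becomes an equality of subfields.

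First, I would deduce from Waldspurger's \cite[Lemme I.1]{waldspurger}, already invoked just before the statement, that the one-dimensionality of $\Pi_f^{K_N}$ gives a model of $\Pi_f$ defined over $E(\Pi_f)$. The algebra $\overline{\mathcal{H}}^{K_N}$ then acts on the one-dimensional $E(\Pi_f)$-subspace of $K_N$-fixed vectors in this model by $E(\Pi_f)$-valued scalars, so $\im \theta_\Pi \subseteq E(\Pi_f)$, and a fortiori $\im \theta_\Pi \otimes_{\Z_{(p)}} \Q$ is a $\Q$-subalgebra of the number field $E(\Pi_f)$. Being a finite-dimensional integral domain over $\Q$, it is automatically a subfield $F \subseteq E(\Pi_f)$.

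Next, I would use Galois theory to show $F = E(\Pi_f)$. Pick $\sigma \in \mathrm{Aut}(\C/F)$; then $\sigma \circ \theta_\Pi = \theta_\Pi$, so the Galois twist $^\sigma\!\Pi_f$ has the same $\overline{\mathcal{H}}^{K_N}$-eigensystem as $\Pi_f$ on its one-dimensional $K_N$-fixed subspace. In particular $\Pi_f$ and $^\sigma\!\Pi_f$ are locally isomorphic at every unramified finite place, so strong multiplicity one for globally generic cuspidal representations of $\GSp$, which applies thanks to hypothesis (ii) via the Asgari-Shahidi functorial transfer to $\mrm{GL}(4)$ followed by the Jacquet-Shalika theorem, forces $^\sigma\!\Pi_f \simeq \Pi_f$. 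By the very definition of the rationality field, $\sigma$ then fixes $E(\Pi_f)$, and Galois theory yields $E(\Pi_f) \subseteq F$.

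The only delicate point is the strong multiplicity one step: without the globally generic hypothesis on $\Pi$, two non-isomorphic cuspidal representations in the same near-equivalence class could share Hecke eigensystems on $K_N$-fixed vectors, and $\theta_\Pi$ would fail to detect $E(\Pi_f)$. Everything else is a routine combination of Waldspurger's rationality result and Galois descent, and the passage from $\im \theta_\Pi$ to $\im \theta_\Pi \otimes_{\Z_{(p)}} \Q$ only amounts to noting that a finite-dimensional $\Q$-algebra without zero divisors inside $\C$ is itself a subfield.
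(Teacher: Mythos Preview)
Your argument is correct and in fact more transparent than the paper's own proof, though the two take genuinely different routes. The paper argues structurally: it decomposes the $\C$-algebra $\overline{\mathcal{H}}^{K_N}_{\C}$ as a product $\prod_i \overline{\mathcal{H}}^{K_N}_{i,\C}$ according to the cohomological decomposition \eqref{dec-level1}, observes that the factor corresponding to $\Pi$ is isomorphic to $\C$ because $\Pi_f^{K_N}$ is one-dimensional (conductor $N$), and concludes that $\im\theta_\Pi\otimes_{\Z_{(p)}}\Q$, being an artinian $\Q$-subalgebra of this factor $\C$, is a number field; the identification with $E(\Pi_f)$ is then asserted ``by the definition of the coefficient field''. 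That last step is exactly the content you supply explicitly via Galois descent and strong multiplicity one: the paper's terse appeal to the definition of $E(\Pi_f)$ as a rationality field implicitly relies on the same mechanism that equality of Hecke eigensystems at almost all places forces $^\sigma\!\Pi_f\simeq\Pi_f$.

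What your approach buys is clarity about where the globally generic hypothesis is actually used---namely, to invoke the Asgari--Shahidi transfer and strong multiplicity one on $\mrm{GL}(4)$---and it avoids having to analyse the algebra structure of $\overline{\mathcal{H}}^{K_N}_{\C}$. What the paper's approach buys is brevity, and a conceptual picture of the Hecke algebra as a product of simple factors indexed by the automorphic decomposition. One small point worth making explicit in your write-up: to apply strong multiplicity one to $^\sigma\!\Pi$ you need $^\sigma\!\Pi$ to be cuspidal automorphic and globally generic as well; this is precisely the content of Theorem~\ref{sigma-endo}, so you may cite it (or the Gan--Raghuram result behind it) at that step.
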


\begin{proof} We consider the decomposition of the $\mathbb{C}$-algebra $\overline{\mathcal{H}}^{K_N}_{\C}=\prod \overline{\mathcal{H}}^{K_N}_{i,\C}$
corresponding to the decomposition \eqref{dec-level1} of the $\overline{\mathcal{H}}^{K_N}_{\C}$-module $\tilde{H}^3_!(S_N, V_{\lambda, \C})$. 
Since $\Pi$ has conductor $N$, the component $\overline{\mathcal{H}}^{K_N}_{i_0,\C}$ corresponding to $\Pi$ is isomorphic to $\mathbb{C}$. Thus 
$\im \theta_\Pi \otimes_{\Z_{(p)}} \Q$ must be a number field because it is an artinian  subalgebra of $\mathcal{H}^{K_N}_{\C,i_0}\cong \mathbb{C}$. 
By the definition of the coefficient field the number field $E(\Pi_f)$, $\im \theta_\Pi \otimes_{\Z_{(p)}} \Q$ is isomorphic to $E(\Pi_f)$.  
%is semi-simple. As a consequence the $\C$-algebra $\overline{\mathcal{H}}^{K_N}_{\C}$ is semi-simple. According to \cite[Corollary 2.2.6]{dk} this implies that $\overline{\mathcal{H}}^{K_N}_{\C}$ contains no strongly nilpotent elements. As $\overline{\mathcal{H}}^{K_N}_{\Q} \subset \overline{\mathcal{H}}^{K_N}_{\C}$, the algebra $\overline{\mathcal{H}}^{K_N}_{\Q}$ contains no strongly nilpotent elements neither and hence is semi-simple by another application of \cite[Corollary 2.2.6]{dk}. By Wedderburn's theorem, there exist some (non necessarily commutative) fields $D_1 \ldots D_r$ and integers $n_1, \ldots, n_r$ such that 
%$$
%\overline{\mathcal{H}}^{K_N}_{\Q} \simeq \prod_{i=1}^r M_{n_i}(D_i),
%$$
%where for some $i_0$ we have $n_{i_0}=1$ and $D_{i_0}=E(\Pi_f)$. Hence the statement now follows from the equality $\overline{\mathcal{H}}^{K_N}_{\Q} = \overline{\mathcal{H}}^{K_N}_{\Z_{(p)}} \otimes_{\Z_{(p)}} \Q$.
\end{proof}

Let us denote by ${M}(\Pi_f, V_{\lambda, \Q})$ the finite dimensional $\Q$-vector space
\begin{eqnarray*} 
{M}(\Pi_f, V_{\lambda, \Q})=\left(\tilde{H}^3_!(S_{K_N}, V_{\lambda, \Z_{(p)}}) \otimes_{\overline{\mathcal{H}}_{\Z_{(p)}}^{K_N}} \overline{\mathcal{H}}_{\Z_{(p)}}^{K_N}/\ker \theta_\Pi \right) \otimes_{\Z_{(p)}} \Q.
\end{eqnarray*}
This is a direct factor of the $\Q$-vector space $\tilde{H}^3_!(S_{K_N}, V_{\lambda, \Q})$. Let 
\begin{equation} \label{projection}
p_M: \tilde{H}^3_!(S_{K_N}, V_{\lambda, \Q}) \rightarrow {M}(\Pi_f, V_{\lambda, \Q})
\end{equation}
denote the projection and let ${M}(\Pi_f, V_{\lambda, \Z_{(p)}})$ be defined as
\begin{equation} \label{O-cohomology}
M(\Pi_f, V_{\lambda, \Z_{(p)}})=p_M\left(\tilde{H}^3_!(S_{K_N}, V_{\lambda, \Z_{(p)}})\right).
\end{equation}
Let $L(\Pi_f, V_{\lambda, \Z_{(p)}})$ denote the $\Z_{(p)}$-lattice of $M(\Pi_f, V_{\lambda, \Q})$ defined as
\begin{equation} \label{lattice}
L(\Pi_f, V_{\lambda, \Z_{(p)}})= {M}(\Pi_f, V_{\lambda, \Q}) \cap \tilde{H}^3_!(S_{K_N}, V_{\lambda, \Z_{(p)}}).
\end{equation}
It is clear that $L(\Pi_f, V_{\lambda, \Z_{(p)}})$ is a sub $\Z_{(p)}$-module of $M(\Pi_f, V_{\lambda, \Z_{(p)}})$. For any $\Z_{(p)}$-algebra $R$, we will denote by $L(\Pi_f, V_{\lambda, R})$ the $R$-module ${L}(\Pi_f, V_{\lambda, \Z_{(p)}}) \otimes_{\Z_{(p)}} R$. According to Corollary \ref{order}, we have
$$
L(\Pi_f, V_{\lambda, \C})=M(\Pi_f, V_{\lambda, \C}) = \bigoplus_{\sigma: E(\Pi_f) \rightarrow \C} \tilde{H}^3_!(S_{K_N}, V_{\lambda, \Z_{(p)}}) \otimes_{\theta^\sigma_\Pi} \C
$$
where for any embedding $\sigma: E(\Pi_f) \rightarrow \C$ the character $\theta^\sigma_\Pi: \overline{\mathcal{H}}^{K_N}_{\Z_{(p)}} \rightarrow \C$ is the composite $\theta^\sigma_\Pi: \overline{\mathcal{H}}^{K_N}_{\Z_{(p)}} \rightarrow E(\Pi_f) \overset{\sigma}{\rightarrow} \C$. 

\begin{thm} \label{sigma-endo} Let $\Pi=\Pi_\infty \otimes \Pi_f$ be an irreducible cuspidal automorphic representation of $G(\A)$ with trivial central character and such that $\Pi_\infty|_{G(\R)_+} \in P(V_{\lambda, \C})$. Then the following statements hold for any $\sigma \in \mrm{Aut}(\C)$.
\begin{enumerate}
\item[\rm{(1)}] There exists an irreducible cuspidal automorphic representation $^\sigma\!\,\Pi \simeq \bigotimes'_v \Pi_{\sigma, v}$ of $G(\A)$ with trivial central character such that $\Pi_{\sigma, \infty}|_{G(\R)_+} \in P(V_{\lambda, \C})$ and whose non-archimedean component is equivalent to $^\sigma\!\,\Pi_f$.
\item[\rm{(2)}] If one assumes that $\Pi$ is globally generic and endoscopic then so is $^\sigma\!\,\Pi$.
\end{enumerate}
\end{thm}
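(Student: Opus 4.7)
The plan for (1) is to exploit the natural $\Q$-rational structure on $H^3_!(S_{K_N(3)}, V_{\lambda, \Q})$ coming from Betti cohomology, together with the cuspidal decomposition of its complexification provided by Proposition \ref{dec}. The group $\mrm{Aut}(\C)$ acts on $H^3_!(S_{K_N(3)}, V_{\lambda, \C}) = H^3_!(S_{K_N(3)}, V_{\lambda, \Q}) \otimes_{\Q} \C$ via the second factor, and this action permutes the Hecke eigencomponents of the right-hand side of Proposition \ref{dec}; concretely, the component attached to the Hecke character $\theta_\Pi$ is sent by $\sigma$ to the component attached to $\sigma \circ \theta_\Pi$. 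Since the decomposition is indexed exclusively by cuspidal automorphic representations with archimedean component in $P(V_{\lambda, \C})$, the image component necessarily comes from another such cuspidal representation $^\sigma\!\,\Pi = \,^\sigma\!\,\Pi_\infty \otimes \,^\sigma\!\,\Pi_f$, whose finite part coincides with $^\sigma\!\,\Pi_f$ by construction. Triviality of the central character is preserved because it is detected by characters of the centre of the Hecke algebra that take values in $\Q$ and hence are $\sigma$-fixed.

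For (2), both properties transfer to $^\sigma\!\,\Pi$ via standard arguments. Global genericity of $\Pi$ provides a cusp form $\varphi \in \Pi$ with non-zero Whittaker integral $W_\varphi(1)$; applying $\sigma$ to such an integral produces a non-zero Whittaker integral of a vector in $^\sigma\!\,\Pi$ against the additive character $\sigma \circ \eta$. Since non-degenerate additive characters of $U(\Q)\backslash U(\A)$ form a single $T(\Q)$-orbit, this yields a non-zero $\eta$-Whittaker integral on $^\sigma\!\,\Pi$, so $^\sigma\!\,\Pi$ is globally generic. Preservation of endoscopy follows from the Galois-equivariance of the Asgari--Shahidi functorial lift $\Pi \mapsto \Sigma$ to $\mrm{GL}(4, \A)$: at every unramified place the lift is characterized by an embedding of Langlands $L$-groups defined over $\Q$, and matching of Satake parameters under this embedding commutes with $\sigma$. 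Therefore the lift of $^\sigma\!\,\Pi$ is $^\sigma\!\,\Sigma$, and if $\Sigma = \Sigma_1 \boxplus \Sigma_2$ is a non-trivial isobaric decomposition (the very meaning of endoscopy), then so is $^\sigma\!\,\Sigma = \,^\sigma\!\,\Sigma_1 \boxplus \,^\sigma\!\,\Sigma_2$, showing that $^\sigma\!\,\Pi$ is again endoscopic.

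The main obstacle I anticipate is making part (1) fully rigorous: one must verify that $\sigma$ indeed sends the cuspidal isotypic component attached to $\Pi$ bijectively onto the cuspidal isotypic component attached to a single other cuspidal automorphic representation. This relies on the fact that $\theta_\Pi$ takes values in the number field $E(\Pi_f)$, together with a multiplicity-one type statement ensuring that $\sigma\circ\theta_\Pi$ corresponds to exactly one cuspidal $^\sigma\!\,\Pi$ in the decomposition \eqref{dec-level1}; here one can invoke the rationality results on cohomological cuspidal representations in \cite{bhr}. Once these compatibilities are in place, the remaining assertions --- membership in $P(V_{\lambda, \C})$, triviality of the central character, genericity, and endoscopy --- follow essentially by tracking the structure through $\sigma$.
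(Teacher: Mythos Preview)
Your argument for part (1) is sound and matches the paper's approach: both rest on the $\Q$-structure of $H^3_!(S_{K_N(3)}, V_{\lambda, \Q})$ and the cuspidal decomposition of Proposition \ref{dec}, with the paper delegating the details to \cite[Proposition 2.4]{gan-raghuram} together with the identification of $L^2$-cohomology and cuspidal cohomology from \cite{mokrane-tilouine}. Your self-critique about needing multiplicity input and the rationality results of \cite{bhr} is exactly the content of those references. Your endoscopy argument in part (2) is also correct in substance: the Asgari--Shahidi lift is characterised place-by-place via an $L$-group homomorphism defined over $\Q$, so $^\sigma\!\,\Sigma$ is indeed the lift of $^\sigma\!\,\Pi$, and an isobaric decomposition of $\Sigma$ yields one of $^\sigma\!\,\Sigma$.

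The genericity argument in part (2), however, has a genuine gap. You write that ``applying $\sigma$ to such an integral produces a non-zero Whittaker integral of a vector in $^\sigma\!\,\Pi$'', but $\sigma \in \mrm{Aut}(\C)$ is in general discontinuous, and the global Whittaker period $W_\varphi(1)=\int_{U(\Q)\backslash U(\A)} \varphi(u)\overline{\eta(u)}\,du$ involves archimedean integration. There is no a priori reason why $\sigma(W_\varphi(1))$ should equal $W_{\psi}(1)$ for any cusp form $\psi$ in any automorphic representation; at finite places one can indeed twist local Whittaker functionals by $\sigma$, but the archimedean factor $W_{\varphi_\infty}(1)$ is a transcendental number to which applying $\sigma$ yields nothing recognisable. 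Moreover, part (1) only places $^\sigma\!\,\Pi_\infty$ in the $L$-packet $P(V_{\lambda,\C})$ without pinning down the generic member, so even local genericity at every place would not finish the job. The paper avoids this difficulty entirely: it invokes \cite[Theorems 9.5 and 10.1]{gan-raghuram} via the exceptional isomorphism $\mrm{PGSp}(4)\simeq \mrm{SO}(5)$, where preservation of global genericity under $\mrm{Aut}(\C)$ is established by structural means (theta correspondence and Arthur-type classification) rather than by manipulating a single Whittaker integral. An alternative route you could have taken, specific to the endoscopic hypothesis, is to realise $\Pi$ as the generic Weil lift of a pair $(\pi_1,\pi_2)$ of $\mrm{GL}(2)$-cusp forms, observe that $(^\sigma\!\pi_1,\,^\sigma\!\pi_2)$ again admits a globally generic Weil lift, and check that its finite part is $^\sigma\!\,\Pi_f$.
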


\begin{proof} Statement (1) follows from the proof of \cite[Proposition 2.4]{gan-raghuram}  combined with the fact that for Siegel threefolds, $L^2$-cohomology coincides with cuspidal cohomology (see \cite[Proposition 1]{mokrane-tilouine} ). As $\mrm{PGSp}(4) \simeq \mrm{SO}(5)$, statement (2) is a particular case of \cite[Theorem 10.1 and Theorem 9.5]{gan-raghuram} .
\end{proof}

The decomposition (\ref{dec-level1}) and the remark at the end of section \ref{heckealgsection} imply that we have a canonical isomorphism
\begin{equation} \label{dec_again}
L(\Pi_f, V_{\lambda, \C}) \simeq \bigoplus_{\sigma: E(\Pi_f) \rightarrow \C}\bigoplus_{\Pi_\infty|_{G(\R)_+} \in P(V_{\lambda, \C})} H^3(\mathfrak{g}_{\C},K_\infty, V_{\lambda, \C} \otimes \Pi_\infty)^{\oplus m(\Pi_\infty \otimes ^\sigma\!\,\Pi_f)} \otimes_{\C}\, ^\sigma\!\,\Pi_f^{K_N}.
\end{equation}
Furthermore, for any $r, s$, the $\C$-vector spaces $H^3(\mathfrak{g}_{\C},K_\infty, V_{\lambda, \C} \otimes \Pi_\infty^{r,s})$ are one-dimensional (see for example \cite[Proposition 3.7]{lemma2}). As a consequence, we have 
\begin{equation} \label{multiplicity-dimension}
\dim_{\C} \left( L(\Pi_f, V_{\lambda, {\C}}) \otimes_{{\Z_{(p)}}} \C \right)=2\sum_{\sigma: E(\Pi_f) \rightarrow \C} m(\Pi_\infty^H \otimes\,\!   ^\sigma\!\,\Pi_f)+m(\Pi_\infty^W \otimes\,\! ^\sigma\!\,\Pi_f).
\end{equation}
Recall that we say that a globally generic cuspidal automorphic representation $\Pi$ of $G(\A)$ is endoscopic if its functorial lift to $\mrm{GL}(4, \A)$ is not cuspidal. 

\begin{pro} \label{multiplicity} Let $\Pi=\Pi_\infty \otimes \Pi_f$ be a globally generic irreducible unitary endoscopic cuspidal automorphic representation of $G(\A)$. Then 
\begin{enumerate}
\item[\rm{(1)}] we have $m(\Pi)=1$,
\item[\rm{(2)}] we have $m(\Pi_\infty^H \otimes \Pi_f)=0$.
\end{enumerate}
\end{pro}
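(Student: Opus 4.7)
The plan is to apply Arthur's endoscopic classification for $\mathrm{GSp}(4)$, extended from the classical group case of Arthur's book by the work of Chan--Gan and others on the similitude groups. Since $\Pi$ is endoscopic in the sense of hypothesis (v), its global Arthur parameter is of Yoshida type:
$$
\Psi = \pi_1 \boxplus \pi_2,
$$
where $\pi_1, \pi_2$ are two cuspidal automorphic representations of $\mathrm{GL}(2,\A)$ with matching central characters, neither factor occurring with multiplicity two (else $\Pi$ would not be a discrete series member of the packet). The global centralizer group is $\mathcal{S}_\Psi \simeq \mathbb{Z}/2\mathbb{Z}$, generated by the element $s_0$ which acts as $-1$ on $\pi_1$ and $+1$ on $\pi_2$ (say).

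Step 1: Describe the archimedean local Arthur packet. At $v = \infty$, the parameter $\Psi_\infty$ is tempered of discrete type and the packet $\Pi_{\Psi_\infty}$ consists of the two representations $\Pi_\infty^W$ and $\Pi_\infty^H$ of $G(\mathbb{R})$ defined in Section \ref{discrete_series_classification}. Under the pairing $\langle\,\cdot\,,\,\cdot\,\rangle_\infty \colon \mathcal{S}_{\Psi_\infty} \times \Pi_{\Psi_\infty} \to \{\pm 1\}$, the generic member $\Pi_\infty^W$ corresponds to the trivial character while the (anti)holomorphic member $\Pi_\infty^H$ corresponds to the non-trivial character. At each finite place $v$, the local component $\Pi_v$ of the globally generic $\Pi$ is the (unique) generic member of $\Pi_{\Psi_v}$, so $\langle s_0, \Pi_v \rangle_v = 1$.

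Step 2: Invoke Arthur's multiplicity formula. For any $\sigma = \bigotimes_v' \sigma_v$ with $\sigma_v \in \Pi_{\Psi_v}$,
$$
m(\sigma) = \tfrac{1}{2}\bigl(1 + \epsilon_\Psi(s_0)\cdot\textstyle\prod_v \langle s_0,\sigma_v\rangle_v\bigr),
$$
where $\epsilon_\Psi \colon \mathcal{S}_\Psi \to \{\pm 1\}$ is the Arthur sign character attached to $\Psi$. Applied to $\sigma = \Pi = \Pi_\infty^W \otimes \Pi_f$, all local pairings equal $+1$, so $m(\Pi) = \tfrac{1}{2}(1 + \epsilon_\Psi(s_0))$. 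Since $\Pi$ is by hypothesis cuspidal, $m(\Pi) \geq 1$; as the formula forces $m(\Pi) \in \{0,1\}$, we conclude $\epsilon_\Psi(s_0) = +1$ and $m(\Pi) = 1$, proving (1). For (2), apply the same formula to $\sigma' = \Pi_\infty^H \otimes \Pi_f$: only the archimedean pairing changes sign, so
$$
m(\Pi_\infty^H \otimes \Pi_f) = \tfrac{1}{2}\bigl(1 + (+1)\cdot(-1)\cdot 1\bigr) = 0.
$$

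The main obstacle is a technical one rather than a conceptual one: we must quote the correct version of the multiplicity formula for $\mathrm{GSp}(4)$ (not $\mathrm{Sp}(4)$, which is what Arthur's book handles directly), using the extension to similitude groups and the explicit description of the archimedean packet $\{\Pi_\infty^W, \Pi_\infty^H\}$ together with the labeling of its elements by characters of $\mathcal{S}_{\Psi_\infty}$. The assignment of the trivial character to the generic member (hence the positive sign) is the Shahidi-type normalization of the local Langlands correspondence and is standard in the generic case.
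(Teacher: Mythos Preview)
Your argument is correct, but it takes a different route from the paper's. The paper handles the two statements separately with distinct references: for (1) it invokes the multiplicity-one theorem for globally generic cusp forms on $\mathrm{GSp}(4)$ due to Jiang--Soudry; for (2) it first uses Asgari--Shahidi to identify $\Pi$ as a Weil (theta) lift from $\mathrm{GSO}(2,2)$, and then applies Weissauer's explicit multiplicity formula $m(\Pi_\infty^H\otimes\Pi_f)=\tfrac12(1+(-1)^{\epsilon_\infty})$, with $\epsilon_\infty=1$ because $\Pi_\infty^H$ is non-generic. Your approach is more uniform: you obtain both (1) and (2) at once from the Arthur multiplicity formula for the Yoshida-type packet. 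This is legitimate---indeed the paper itself cites Gee--Ta\"ibi for Arthur's classification on $\mathrm{GSp}_4$ elsewhere (so that, rather than ``Chan--Gan and others,'' is the reference you want). The trade-off is that the paper's proof relies only on results available well before the full Arthur classification for similitude groups was established, whereas yours imports heavier machinery but gives a cleaner and more conceptual explanation of why exactly one of $\Pi_\infty^W,\Pi_\infty^H$ survives. One small remark: for tempered (Yoshida) parameters the sign character $\epsilon_\Psi$ is automatically trivial, so you need not deduce $\epsilon_\Psi(s_0)=+1$ from $m(\Pi)\geq 1$; stating this directly would streamline Step~2.
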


\begin{proof} The first statement follows from the main result of \cite{jiang-soudry}. According to \cite[Proposition 2.2 (a)]{asgari-shahidi}, $\Pi$ is obtained as a Weil lifting from $\mrm{GSO}(2,2, \A)$. As $\Pi$ is assumed to be globally generic, for every prime $l$ the representation $\Pi_l$ has a Whittaker model. Hence, applying \cite[Theorem 5.2 (4)]{weissauer2} to $\Pi$ we obtain
$$
m(\Pi_\infty^H \otimes \Pi_f)=m(\overline{\Pi}_\infty^H \otimes \Pi_f)=\frac{1}{2}(1+(-1)^{\epsilon_\infty})
$$ 
where $\epsilon_\infty=0$ or $1$ if $\Pi_\infty^H$ has or has not a Whittaker model. But it is well known that $\Pi_\infty^H$ is the archimedean component of an automorphic representation associated to a cuspidal Siegel modular form, which does not have a Whittaker model. As a consequence we obtain $m(\Pi_\infty^H \otimes \Pi_f)=m(\overline{\Pi}_\infty^H \otimes \Pi_f)=0.$
\end{proof}

\begin{cor} Let the notation and assumptions be as in the previous result. Then
\begin{eqnarray*}
\mrm{rk}_{{\Z_{(p)}}}L(\Pi_f, V_{\lambda, {\Z_{(p)}}})=2[E(\Pi_f):\Q].\\
\end{eqnarray*}
\end{cor}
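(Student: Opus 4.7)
The plan is to substitute into the dimension formula \eqref{multiplicity-dimension} the multiplicity information coming from Proposition \ref{multiplicity}, applied to each Galois-twist $^\sigma\!\,\Pi$ rather than to $\Pi$ itself. The corollary then follows because $L(\Pi_f, V_{\lambda, \Z_{(p)}})$ is, by construction \eqref{lattice}, a torsion-free $\Z_{(p)}$-module whose rank equals $\dim_\C(L(\Pi_f, V_{\lambda, \Z_{(p)}}) \otimes_{\Z_{(p)}} \C)$.

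First, I would unwind the indexing. Theorem \ref{sigma-endo} shows that for every $\sigma \in \mrm{Aut}(\C)$, there exists a cuspidal automorphic representation $^\sigma\!\,\Pi = \Pi_{\sigma,\infty} \otimes\,\!^\sigma\!\,\Pi_f$ of $G(\A)$ with trivial central character, with $\Pi_{\sigma,\infty}|_{G(\R)_+} \in P(V_{\lambda,\C})$, and which is again globally generic and endoscopic. Since $^\sigma\!\,\Pi$ is globally generic, its archimedean component must admit a Whittaker model; by the classification recalled in Proposition \ref{lpaquet} and the discussion following it, $\Pi_\infty^H$ has no Whittaker model, so necessarily $\Pi_{\sigma,\infty} \simeq \Pi_\infty^W$.

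Next, I would apply Proposition \ref{multiplicity} with $^\sigma\!\,\Pi$ playing the role of $\Pi$: statement (1) gives $m(\Pi_\infty^W \otimes \,\!^\sigma\!\,\Pi_f) = m(^\sigma\!\,\Pi) = 1$, and statement (2) gives $m(\Pi_\infty^H \otimes\,\!^\sigma\!\,\Pi_f) = 0$. Feeding this into \eqref{multiplicity-dimension} yields
$$
\dim_\C\bigl(L(\Pi_f, V_{\lambda,\C})\otimes_{\Z_{(p)}}\C\bigr) = 2\sum_{\sigma: E(\Pi_f) \to \C}\bigl(m(\Pi_\infty^H \otimes\,\!^\sigma\!\,\Pi_f) + m(\Pi_\infty^W \otimes\,\!^\sigma\!\,\Pi_f)\bigr) = 2[E(\Pi_f):\Q].
$$
Since $L(\Pi_f, V_{\lambda,\Z_{(p)}})$ is a sub-$\Z_{(p)}$-module of the torsion-free module $\tilde{H}^3_!(S_{K_N}, V_{\lambda,\Z_{(p)}})$ (the torsion was killed by the assumption $p \notin S_\mrm{tors}$), it is torsion-free, and its $\Z_{(p)}$-rank coincides with the above $\C$-dimension, giving $\mrm{rk}_{\Z_{(p)}} L(\Pi_f, V_{\lambda,\Z_{(p)}}) = 2[E(\Pi_f):\Q]$.

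The only non-routine step is identifying $\Pi_{\sigma,\infty}$ with $\Pi_\infty^W$; this relies on preservation of the global genericity property under Aut$(\C)$-twisting, which is exactly what Theorem \ref{sigma-endo}(2) supplies. Everything else is formal bookkeeping.
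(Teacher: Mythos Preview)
Your proof is correct and follows essentially the same approach as the paper: combine equation \eqref{multiplicity-dimension} with Theorem \ref{sigma-endo} and Proposition \ref{multiplicity}, then use that the rank of the torsion-free $\Z_{(p)}$-module equals the $\C$-dimension after base change. Your write-up is in fact more explicit than the paper's one-sentence proof, in that you spell out the identification $\Pi_{\sigma,\infty}\simeq\Pi_\infty^W$ needed to read off $m(\Pi_\infty^W\otimes\,^\sigma\!\,\Pi_f)=1$ from Proposition \ref{multiplicity}(1).
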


\begin{proof} This follows from the fact that $\mrm{rk}_{{\Z_{(p)}}}L(\Pi_f, V_{\lambda, {\Z_{(p)}}})=\dim_{\C} L(\Pi_f, V_{\lambda, {\C}})$ and from \eqref{multiplicity-dimension} combined with Theorem \ref{sigma-endo} and Proposition \ref{multiplicity}.
\end{proof}

\subsection{Poincar\'e duality} Let $\check{V}_{\lambda, \Z_{(p)}}$ denote the dual local system $\check{V}_{\lambda, \Z_{(p)}}=\underline{\Hom}(V_{\lambda, \Z_{(p)}}, \Z_{(p)})$. Let us define
\begin{equation} \label{s'-tors}
S'_{\mathrm{tors}}=\{p \text{ prime }, p|\#H^4_c(S_{K_N(3)}, V_{\lambda, \Z})_{\mathrm{tors}}\}.
\end{equation}

\begin{lem} \label{Pduality} Assume that $p \notin S'_{\mathrm{tors}}$. Then we have a canonical isomorphism
$$
H^3(S_{K_N(3)}, \check{V}_{\lambda, \Z_{(p)}}) \simeq \Hom_{\Z_{(p)}}(H^3_c(S_{K_N(3)}, {V}_{\lambda, \Z_{(p)}}), \Z_{(p)}).
$$
\end{lem}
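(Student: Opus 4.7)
The plan is to derive the statement from Poincar\'e--Verdier duality combined with a universal coefficients short exact sequence, using the hypothesis $p \notin S'_{\mathrm{tors}}$ to kill the error term.

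First I would observe that $S_{K_N(3)}$ is a smooth quasi-projective $\Q$-scheme of complex dimension $3$, so the associated complex analytic space is a smooth oriented real manifold of dimension $6$. Moreover the local system $V_{\lambda,\Z_{(p)}}$ is locally free of finite rank over $\Z_{(p)}$, so it is reflexive, i.e.\ $\check{\check V}_{\lambda,\Z_{(p)}} = V_{\lambda,\Z_{(p)}}$. Writing $p_{K_N(3)}: S_{K_N(3)} \to \bullet$ for the structure map, Verdier duality in the formalism of $(f^*, f_*, f_!, f^!)$ (cf.\ the references to \cite{ayoub1, ayoub2, kashiwara-schapira} already used above) yields an isomorphism
$$
R\Hom_{\Z_{(p)}}\bigl(R\Gamma_c(S_{K_N(3)}, V_{\lambda,\Z_{(p)}}),\, \Z_{(p)}\bigr) \;\simeq\; R\Gamma(S_{K_N(3)}, \check{V}_{\lambda,\Z_{(p)}})[6],
$$
because $p_{K_N(3)}^{!}\Z_{(p)} \simeq \Z_{(p)}[6]$ on the smooth oriented $6$-manifold $S_{K_N(3)}$.

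Next I would take cohomology in degree $-3$ on the left hand side. Since $\Z_{(p)}$ is a principal ideal domain, the standard universal coefficients spectral sequence collapses into a short exact sequence
$$
0 \to \mathrm{Ext}^1_{\Z_{(p)}}\bigl(H^{4}_c(S_{K_N(3)}, V_{\lambda,\Z_{(p)}}),\, \Z_{(p)}\bigr) \to H^3(S_{K_N(3)}, \check{V}_{\lambda,\Z_{(p)}}) \to \Hom_{\Z_{(p)}}\bigl(H^{3}_c(S_{K_N(3)}, V_{\lambda,\Z_{(p)}}),\, \Z_{(p)}\bigr) \to 0.
$$

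Finally I would check that the $\mathrm{Ext}^1$ term vanishes under our hypothesis. Since $\Z_{(p)}$ is flat over $\Z$ we have $H^4_c(S_{K_N(3)}, V_{\lambda,\Z_{(p)}}) = H^4_c(S_{K_N(3)}, V_{\lambda,\Z}) \otimes_{\Z} \Z_{(p)}$, and over the PID $\Z_{(p)}$ the group $\mathrm{Ext}^1_{\Z_{(p)}}(M, \Z_{(p)})$ vanishes precisely when $M$ is torsion-free. The torsion subgroup of $H^4_c(S_{K_N(3)}, V_{\lambda,\Z}) \otimes_{\Z} \Z_{(p)}$ is the $p$-primary part of the torsion of $H^4_c(S_{K_N(3)}, V_{\lambda,\Z})$, which by the very definition \eqref{s'-tors} of $S'_{\mathrm{tors}}$ is trivial when $p \notin S'_{\mathrm{tors}}$. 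Thus the Ext term disappears and the middle arrow of the displayed sequence becomes the desired isomorphism.

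I do not expect any serious obstacle: the content is standard Poincar\'e--Verdier duality together with the universal coefficient theorem over a PID, and the role of the hypothesis $p \notin S'_{\mathrm{tors}}$ is exactly to kill the Ext$^1$ correction term. The only mildly delicate point is keeping track of the dualizing twist $[6]$ so that the degrees $H^3$ and $H^4_c$ on the two sides match correctly.
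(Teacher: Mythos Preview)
Your proof is correct and follows essentially the same route as the paper: Verdier duality $R\Hom(R\Gamma_c(V),\Z_{(p)})\simeq R\Gamma(\check V)[6]$ together with the universal-coefficient/hyperext spectral sequence over the PID $\Z_{(p)}$, with the hypothesis $p\notin S'_{\mathrm{tors}}$ used precisely to kill the $\mathrm{Ext}^1(H^4_c,\Z_{(p)})$ term. The only cosmetic difference is that the paper writes out the two spectral sequences $E_2^{p,q}$ and ${E'_2}^{p,q}$ explicitly rather than packaging the first one as your short exact sequence.
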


\begin{proof} Recall that $p_3: S_{K_N(3)} \rightarrow \bullet$ denotes the canonical continuous map from $S_{K_N(3)}$ to the topological space reduced to a point. According to \cite[Proposition 3.1.10]{kashiwara-schapira} we have a canonical isomorphism
$$
R\underline{\Hom}(Rp_{3\,!}V_{\lambda, \Z_{(p)}}, \Z_{(p)}) \simeq Rp_{3\,*}R\underline{\Hom}(V_{\lambda, \Z_{(p)}},p_3^! \Z_{(p)})
$$
in the derived category of abelian groups. Hence, we have two spectral sequences
\begin{eqnarray*}
E_2^{p,q}=\mrm{Ext}^p_{\Z_{(p)}}(H^{-q}_c(S_{K_N(3)}, {V}_{\lambda, \Z_{(p)}}), \Z_{(p)}) &\implies& E_\infty^{p+q},\\
{E'_2}^{p,q}=H^p(S_{K_N(3)}, \underline{\mrm{Ext}}^q(V_{\lambda, \Z_{(p)}}, p_3^!\Z_{(p)})) &\implies& {E'_\infty}\!\!^{p+q}
\end{eqnarray*}
where $E_\infty^{p+q} \simeq {E'_\infty}\!\!^{p+q}$. According to \cite[Proposition 3.3.2 (i)]{kashiwara-schapira} , as $S_{K_N(3)}$ is smooth of real dimension $6$, we have $p_3^! \Z_{(p)}=\Z_{(p)}[6]$. As a consequence, using the fact that $V_{\lambda, \Z_{(p)}}$ is a sheaf of free $\Z_{(p)}$-modules of finite type, we have $\underline{\mrm{Ext}}^q(V_{\lambda, \Z_{(p)}}, p_3^!\Z_{(p)})=0$ for $q \neq -6$ and $\underline{\mrm{Ext}}^{-6}(V_{\lambda, \Z_{(p)}}, p_3^!\Z_{(p)})=\check{V}_{\lambda, \Z_{(p)}}$. This implies immediately ${E'_2}^{3,-6} \simeq {E'_\infty}\!\!^{-3}$.\\
\indent On the other hand, as $\Z_{(p)}$ is a discrete valuation ring, we have $E_2^{p, q}=0$ for any $p \geq 2$ and any $q \in \Z$. Hence $E_2^{0,-3}=E_\infty^{0,-3}$ and $E_\infty^{2, -5}=E_\infty^{3,-6}=0$.  The assumption that $p$ is outside $S'_{\mathrm{tors}}$ implies that $E_2^{1,-4}=E_\infty^{1,-4}=0$. Hence $E_2^{0,-3}=E_\infty^{-3}$. As $E_\infty^{-3} \simeq {E'_\infty}\!\!^{-3}$ we have a canonical isomorphism $E_2^{0,-3} \simeq {E'_2}^{3,-6}$ which proves the claim.
\end{proof}

By the previous Lemma, assuming that $p \notin S'_{\mathrm{tors}}$, we have a non-degenerate pairing
$$
\langle \,,\,\rangle: H^3(S_{K_N(3)}, \check{V}_{\lambda, \Z_{(p)}}) \times H^3_c(S_{K_N(3)}, {V}_{\lambda, \Z_{(p)}}) \rightarrow \Z_{(p)}.
$$
By composing with the natural inclusion $H^3_!(S_{K_N(3)}, \check{V}_{\lambda, \Z_{(p)}}) \hookrightarrow H^3(S_{K_N(3)}, \check{V}_{\lambda, \Z_{(p)}})$ on the first factor, we obtain a pairing
$$
\langle \,,\, \rangle: H^3_!(S_{K_N(3)}, \check{V}_{\lambda, \Z_{(p)}}) \times H^3_c(S_{K_N(3)}, {V}_{\lambda, \Z_{(p)}}) \rightarrow \Z_{(p)}.
$$

\begin{lem} \label{duality!}For any $x \in H^3_!(S_{K_N(3)}, \check{V}_{\lambda, \Z_{(p)}})$ and for any $y \in \ker(H^3_c(S_{K_N(3)}, {V}_{\lambda, \Z_{(p)}}) \rightarrow H^3(S_{K_N(3)}, {V}_{\lambda, \Z_{(p)}}))$ we have $\langle x, y \rangle=0$.
\end{lem}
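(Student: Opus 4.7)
The plan is to lift $x$ to a compactly supported class and then invoke the standard compatibility of cup products with the natural map $H^\bullet_c \to H^\bullet$. By the very definition of $H^3_!$, the class $x$ is the image of some $\tilde x \in H^3_c(S_{K_N(3)}, \check V_{\lambda, \Z_{(p)}})$ under the canonical morphism $H^3_c(\check V_{\lambda, \Z_{(p)}}) \to H^3(\check V_{\lambda, \Z_{(p)}})$. Let $\bar y \in H^3(S_{K_N(3)}, V_{\lambda, \Z_{(p)}})$ denote the image of $y$; by hypothesis $\bar y = 0$.

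Next, I would reinterpret the Poincar\'e pairing produced in Lemma \ref{Pduality} as a cup product followed by a trace. Concretely, it should factor as
$$
H^3(\check V_{\lambda, \Z_{(p)}}) \times H^3_c(V_{\lambda, \Z_{(p)}}) \overset{\cup}{\longrightarrow} H^6_c(\check V_{\lambda, \Z_{(p)}} \otimes V_{\lambda, \Z_{(p)}}) \overset{\mrm{eval}}{\longrightarrow} H^6_c(\Z_{(p)}) \overset{\mrm{tr}}{\longrightarrow} \Z_{(p)}.
$$
Running the same construction with the roles of $\check V_{\lambda, \Z_{(p)}}$ and $V_{\lambda, \Z_{(p)}}$ exchanged yields a second pairing
$$
\langle \,,\,\rangle': H^3_c(S_{K_N(3)}, \check V_{\lambda, \Z_{(p)}}) \times H^3(S_{K_N(3)}, V_{\lambda, \Z_{(p)}}) \longrightarrow \Z_{(p)},
$$
again defined as a cup product into $H^6_c$ composed with the same trace map. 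The decisive ingredient is the familiar compatibility of the cup product with the forgetful map $H^\bullet_c \to H^\bullet$: for $\tilde x \in H^3_c(\check V)$ with image $x \in H^3(\check V)$ and $y \in H^3_c(V)$ with image $\bar y \in H^3(V)$, the two cup products $x \cup y$ and $\tilde x \cup \bar y$ coincide in $H^6_c(\check V \otimes V)$. Composing with evaluation and the trace yields the identity $\langle x, y \rangle = \langle \tilde x, \bar y \rangle'$, and since $\bar y = 0$ the right-hand side vanishes.

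The main obstacle is to check that the abstract Poincar\'e pairing produced by the Kashiwara--Schapira isomorphism $R\underline{\Hom}(Rp_{3,!}V, \Z_{(p)}) \simeq Rp_{3,*}\check V[6]$ agrees with the concrete cup-product pairing described above; this is routine bookkeeping in the six-functor formalism, the key point being that the Kashiwara--Schapira adjunction is built from precisely the evaluation $\check V \otimes V \to \Z_{(p)}$ and the trace $Rp_{3,!}p_3^!\Z_{(p)} \to \Z_{(p)}$ that appear in the cup-product description. Once this identification is in hand, the symmetry between $\langle \,,\,\rangle$ and $\langle \,,\,\rangle'$ is automatic and the lemma follows as above.
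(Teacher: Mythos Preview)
Your argument is correct, and it is genuinely different from the route the paper takes. The paper does not stay in the sheaf-theoretic setting with $\Z_{(p)}$ coefficients: instead it extends scalars to $\C$, passes to de Rham cohomology via the comparison isomorphism, and works with explicit differential forms. Concretely, it writes a de Rham representative of $x$ as a compactly supported closed form plus an exact form, writes a representative of $y$ as an exact form (since $y$ dies in ordinary cohomology), and then checks directly that the wedge product is the exterior derivative of a compactly supported form, so the integral vanishes by Stokes' theorem.

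Your approach has the advantage of never leaving $\Z_{(p)}$ coefficients and never invoking the Betti--de Rham comparison; the whole argument lives inside the six-functor formalism already in use, and the vanishing comes out of the single formal identity $x\cup y=\tilde x\cup \bar y$ in $H^6_c$. The paper's approach, by contrast, is more concrete and makes the mechanism of vanishing visible at the level of forms, at the cost of an extra reduction step (to $\C$ and to de Rham). The one place where your write-up is slightly soft is the identification of the Kashiwara--Schapira duality isomorphism with the cup-product-plus-trace pairing; you flag this yourself, and it is indeed routine, but in a fully written proof it deserves a reference or a short justification rather than the phrase ``routine bookkeeping''.
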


\begin{proof} Let us denote by $\langle \,,\, \rangle_{\C}: H^3_!(S_{K_N(3)}, \check{V}_{\lambda, \C}) \times H^3_c(S_{K_N(3)}, {V}_{\lambda, \C}) \rightarrow \C$ be the pairing obtained from $\langle \,,\,\rangle$ by base change to $\C$. Let $x_{\C}$ denote the image of $x$ in $H^3_!(S_{K_N(3)}, \check{V}_{\lambda, \C})$ and let $y_{\C}$ denote the image of $y$ in $H^3_c(S_{K_N(3)}, {V}_{\lambda, \C})$. It is enough to prove that $\langle x_{\C}, y_{\C} \rangle_{\C}=0$. By the comparison isomorphism between Betti and de Rham cohomology and by compatibility of the Poincar\'e duality in Betti and de Rham cohomology, it is enough to prove the following statement: for any $x'_{\C} \in H^3_{dR, !}(S_{K_N(3)},  \check{\mathcal{V}}_\lambda)$ and any $y'_{\C} \ \in \ker(H^3_{dR, c}(S_{K_N(3)}, {\mathcal{V}}_{\lambda}) \rightarrow H^3_{dR}(S_{K_N(3)}, {\mathcal{V}}_{\lambda}))$, we have $\langle x'_{\C}, y'_{\C} \rangle_{dR, \C}=0$ where $\mathcal{V}_{\lambda}$ is the complex vector bundle associated with the local system $V_{\lambda, \C}$, $\check{\mathcal{V}}_\lambda$ is the dual complex vector bundle and $\langle \,,\,\rangle_{dR, \C}$ denotes the Poincar\'e duality pairing in de Rham cohomology. Let us abusively denote by $x'_{\C}$ and $y'_{\C}$ closed differential forms in the cohomology class of $x'_{\C}$ and $y'_{\C}$ respectively. Because $x \in \im(H^3_c(S_{K_N(3)}, \check{V}_{\lambda, \Z_{(p)}}) \rightarrow H^3(S_{K_N(3)}, \check{V}_{\lambda, \Z_{(p)}}))$, there exists a compactly supported closed differential $3$-form $x''$ and a differential $2$-form $u$ such that $x'_{\C}=x''+du$ and because $y \in \ker(H^3_c(S_{K_N(3)}, {V}_{\lambda, \Z_{(p)}}) \rightarrow H^3(S_{K_N(3)}, {V}_{\lambda, \Z_{(p)}}))$ there exists a differential $2$-form $y''$ such that the compactly supported differential form $y'_{\C}$ satisfies $y'_{\C}=dy''$. We need to prove
$\int_{S_{K_N(3)}} x'_{\C} \wedge y'_{\C}=0.$ But
$$
x'_{\C} \wedge y'_{\C}=x'' \wedge dy'' + du \wedge y'_{\C}=d(-x'' \wedge y''+ u \wedge y'_{\C}).
$$
As the differential form $-x'' \wedge y''+ u \wedge y'_{\C}$ is compactly supported, the statement follows from Stokes theorem.
\end{proof}

\begin{lem} Let $\lambda=\lambda(k, k', 0)$ be a dominant weight with trivial central character. Assume that $k+k'+3 \leq p$. Then, there exists a non-degenerate $G_{\Z_{(p)}}$-equivariant pairing $$[\,,\,]: V_{\lambda, \Z_{(p)}} \times V_{\lambda, \Z_{(p)}} \rightarrow \Z_{(p)}$$
where $V_{\lambda, \Z_{(p)}} \times V_{\lambda, \Z_{(p)}}$ is endowed with the diagonal action and $\Z_{(p)}$ with the trivial action.
\end{lem}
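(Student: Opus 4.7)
The plan is to build the pairing from iterated contraction via the symplectic form $\mathcal{J}$ on $\mrm{Std}$, then to deduce non-degeneracy over $\Z_{(p)}$ via reduction modulo $p$ together with a Schur's lemma argument; the latter requires that the Weyl module $V_{\lambda,\F_p}$ be irreducible, which is exactly where the hypothesis $k+k'+3\leq p$ enters.

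First, I would define on $\mrm{Std}_{\Z_{(p)}}^{\otimes d}$, with $d=k+k'$, the $\Z_{(p)}$-valued pairing
$$
[v_1\otimes\cdots\otimes v_d,\; w_1\otimes\cdots\otimes w_d]\;=\;\prod_{i=1}^{d}{}^t v_i\,\mathcal{J}\,w_i.
$$
The defining identity ${}^tg\,\mathcal{J}\,g=\nu(g)\mathcal{J}$ of $G=\GSp$ yields the transformation rule $[gv,gw]=\nu(g)^{d}[v,w]$. The parity condition $k+k'\equiv c\pmod 2$ forces $d$ to be even when $c=0$, so $t=(c-k-k')/2=-d/2$ is a well-defined integer, and tensoring both arguments by $\nu^{\otimes t}$ absorbs the factor $\nu(g)^{d+2t}=\nu(g)^{0}=1$. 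This produces a $G_{\Z_{(p)}}$-equivariant pairing on $\mrm{Std}_{\Z_{(p)}}^{\otimes d}\otimes\nu^{\otimes t}$ with trivial action on the target, and restriction to the subrepresentation $V_{\lambda,\Z_{(p)}}=(\mrm{Std}_{\Z_{(p)}}^{\langle d\rangle}\cap\mbb{S}_\lambda(\mrm{Std}_{\Z_{(p)}}))\otimes\nu^{\otimes t}$ produces the desired form $[\,,\,]$.

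Over $\Q$ the form is nonzero: evaluating on the highest weight vector $c_\lambda(e_1^{\otimes k}\otimes e_2^{\otimes k'})$ paired with the lowest weight vector $c_\lambda(f_1^{\otimes k}\otimes f_2^{\otimes k'})$ (where $f_i$ is the $\mathcal{J}$-dual of $e_i$) gives a product of symplectic pairings equal to $1$, multiplied by a combinatorial factor coming from $c_\lambda$ whose denominator divides $d!$ and is therefore a unit in $\Z_{(p)}$ since $p\geq k+k'+3>d$. Because $V_{\lambda,\Q}$ is absolutely irreducible and self-dual (for $c=0$ one has $V_{\lambda,\Q}^{\vee}\simeq V_{\lambda,\Q}$, since $-w_0=1$ in the Weyl group of $\mrm{Sp}(4)$), Schur's lemma implies that this nonzero $G_{\Q}$-equivariant pairing is non-degenerate over $\Q$. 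Consequently the associated $\Z_{(p)}$-linear map $\phi:V_{\lambda,\Z_{(p)}}\to\Hom_{\Z_{(p)}}(V_{\lambda,\Z_{(p)}},\Z_{(p)})$ is injective with torsion cokernel.

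The main obstacle is then to show that $\phi$ is an isomorphism over $\Z_{(p)}$, equivalently that its reduction $\overline{\phi}$ modulo $p$ is an isomorphism of $G_{\F_p}$-modules. After rescaling $[\,,\,]$ by a $p$-adic unit if necessary to make it primitive, $\overline{\phi}$ is a nonzero $G_{\F_p}$-equivariant map. The hypothesis $k+k'+3\leq p$ is precisely the linkage/Jantzen bound $\langle\lambda+\rho,\alpha^{\vee}\rangle\leq p$ for every positive coroot $\alpha^\vee$ of $\mrm{Sp}(4)$, the maximum value $k+k'+3$ being attained at $(\rho_1+\rho_2)^{\vee}=e_1+e_2$; under this bound the Jantzen sum formula guarantees that the Weyl module $V_{\lambda,\F_p}$ is irreducible and coincides with its dual Weyl module $V_{\lambda,\F_p}^{\vee}$. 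Schur's lemma then forces $\overline{\phi}$ to be an isomorphism, and the proof is complete.
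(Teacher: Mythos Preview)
Your argument and the paper's share the same spine: both rely on irreducibility of $V_{\lambda,\F_p}$ under the $p$-small weight hypothesis (the paper cites Polo--Tilouine, you invoke the equivalent Jantzen bound and correctly identify the maximum $\langle\lambda+\rho,\alpha^\vee\rangle=k+k'+3$ at $(\rho_1+\rho_2)^\vee$), then combine self-duality, Schur's lemma modulo $p$, and Nakayama to promote a nonzero map to an isomorphism over $\Z_{(p)}$. The difference is that you build the pairing explicitly as the $d$-fold product of symplectic contractions on $\mrm{Std}^{\otimes d}$ and restrict to $V_{\lambda,\Z_{(p)}}$, whereas the paper works abstractly: it takes the one-dimensional space $\Hom_{G_\Q}(V_{\lambda,\Q},\check V_{\lambda,\Q})$, observes that $\Hom_{G_{\Z_{(p)}}}(V_{\lambda,\Z_{(p)}},\check V_{\lambda,\Z_{(p)}})$ is a rank-one $\Z_{(p)}$-lattice inside it, and shows a generator reduces to an isomorphism. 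Your explicit construction has the advantage of producing the pairing the paper later uses in Proposition~\ref{pd-computation}, but the paper's route sidesteps any computation.

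There is one genuine soft spot in your argument. You assert that $[c_\lambda(e_1^{\otimes k}\otimes e_2^{\otimes k'}),\,c_\lambda(f_1^{\otimes k}\otimes f_2^{\otimes k'})]$ is ``$1$ times a combinatorial factor'', but this is not verified, and it is exactly what is needed: if the restriction of your pairing to $V_{\lambda,\Q}$ were zero, the rescaling step would be vacuous and the whole argument would collapse. The point is not automatic, because $c_\lambda\cdot\mrm{Std}^{\otimes d}$ need not be its own orthogonal under a form satisfying $[\sigma v,w]=[v,\sigma^{-1}w]$ (one has $c_\lambda^*=b_\lambda a_\lambda\neq c_\lambda$ in general). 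A clean way to fill the gap: the form on $\mrm{Std}_\Q^{\otimes d}$ is non-degenerate and $G\times\mathcal S_d$-invariant for the diagonal $\mathcal S_d$-action, hence under Schur--Weyl duality $\mrm{Std}_\Q^{\langle d\rangle}\simeq\bigoplus_\mu V_\mu\otimes S^\mu$ it factors as a product of non-degenerate $G$-invariant and $\mathcal S_d$-invariant forms; the latter on the Specht module $S^\lambda$ over $\Q$ is positive-definite (being the restriction of the standard inner product), so the one-dimensional image of $c_\lambda$ inside $S^\lambda$ is non-isotropic, whence the restriction to $V_\lambda$ is nonzero. With this point secured, your proof is complete and equivalent to the paper's.
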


\begin{proof} Let us denote by $V_{\lambda, \F_{p}}$, resp. $\check{V}_{\lambda, \F_{p}}$  the reduction modulo $p$ of $V_{\lambda, \Z_{(p)}}$, resp. of $\check{V}_{\lambda, \Z_{(p)}}$. It follows from the Lemma in section 1.9 of \cite{polo-tilouine} that $V_{\lambda, \F_{p}}$ is irreducible. Furthermore as isomorphism classes of irreducible representations of $G_{\F_p}$ are determined by their highest weight, we have $V_{\lambda, \F_{p}} \simeq \check{V}_{\lambda, \F_{p}}$. Let $r$ be the composition of the canonical projection $V_{\lambda, \Z_{(p)}} \rightarrow V_{\lambda, \F_{p}}$ and of the isomorphism $V_{\lambda, \F_{p}} \simeq \check{V}_{\lambda, \F_{p}}$. Let us fix $v \in  V_{\lambda, \Z_{(p)}}$ such that $r(v) \neq 0$, let $w \in \check{V}_{\lambda, \Z_{(p)}}$ be a lifting of $r(v)$ and let $i: V_{\lambda, \Z_{(p)}} \rightarrow \check{V}_{\lambda, \Z_{(p)}}$ be the unique $G_{\Z_{(p)}}$-equivariant map sending $v$ to $w$. This map $i$ is well defined because $V_{\lambda, \Z_{(p)}}$ is irreducible by Nakayama's lemma. The reduction modulo $p$ of  $i$ is the isomorphism an isomorphism $V_{\lambda, \F_{p}} \simeq \check{V}_{\lambda, \F_{p}}$ considered above. Hence $i$ is an isomorphism by Nakayama's lemma. 
\end{proof}

From now on, we fix a dominant weight $\lambda=\lambda(k,k',0)$ with trivial central character. Let $S_{\mathrm{weight}}$ denote the finite set
\begin{equation} \label{s-weight}
S_{\mathrm{weight}}=\{p \text{ primes}\ \vert \  p<k+k'+3\}
\end{equation} and assume from now on that $p \notin S_{\mathrm{weight}}$. The non-degenerate bilinear form of the previous lemma allows to identify $V_{\lambda, \Z_{(p)}}$ and $\check{V}_{\lambda, \Z_{(p)}}$. According to Lemma \ref{duality!}, there exists a unique pairing
$$
\langle \,,\, \rangle: H^3_!(S_{K_N(3)}, {V}_{\lambda, \Z_{(p)}}) \times H^3_!(S_{K_N(3)}, {V}_{\lambda, \Z_{(p)}}) \rightarrow \Z_{(p)}.
$$
such that the diagram
\begin{equation} \label{duality-diagram}
\begin{CD}
H^3_!(S_{K_N(3)}, {V}_{\lambda, \Z_{(p)}})@. \times H^3_c(S_{K_N(3)}, {V}_{\lambda, \Z_{(p)}}) @>\langle\,,\,\rangle>> \Z_{(p)}\\
@| @VVV       @|                                                                                                                                            \\
H^3_!(S_{K_N(3)}, {V}_{\lambda, \Z_{(p)}}) @. \times H^3_!(S_{K_N(3)}, {V}_{\lambda, \Z_{(p)}}) @>\langle\,,\,\rangle>> \Z_{(p)}
\end{CD}
\end{equation}
commutes. Let us define
\begin{equation} \label{s''-tors}
S''_{\mathrm{tors}}=\{p \text{ prime }, p|\#(H^3(S_{K_N(3)}, V_{\lambda, \Z})/H^3_!(S_{K_N(3)}, V_{\lambda, \Z}))_{\mathrm{tors}}\}
\end{equation}

\begin{cor}\label{dual-int} Let $p \notin S_{\mathrm{weight}} \cup S'_{\mathrm{tors}} \cup S''_{\mathrm{tors}}$. Then the natural map
$$
H^3_!(S_{K_N(3)}, {V}_{\lambda, \Z_{(p)}}) \rightarrow \Hom_{\Z_{(p)}}(H^3_!(S_{K_N(3)}, {V}_{\lambda, \Z_{(p)}}), \Z_{(p)})
$$
induced by the pairing above is an isomorphism.
\end{cor}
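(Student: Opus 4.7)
The plan is to identify the map in the statement with a factorization through $I^* := \Hom_{\Z_{(p)}}(I, \Z_{(p)})$ of an injection $I \hookrightarrow A^*$ coming from Poincar\'e duality, and then to prove surjectivity by a torsion-freeness comparison with $H^3/H^3_!$.

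Set $A := H^3_c(S_{K_N(3)}, V_{\lambda, \Z_{(p)}})$, $B := H^3(S_{K_N(3)}, V_{\lambda, \Z_{(p)}})$, $I := H^3_!(S_{K_N(3)}, V_{\lambda, \Z_{(p)}})$, $K := \ker(A \to B)$, $Q := B/I$, and $A^* := \Hom_{\Z_{(p)}}(A, \Z_{(p)})$. By Lemma \ref{Pduality} together with the self-duality of $V_{\lambda, \Z_{(p)}}$ supplied by the preceding lemma (available because $p \notin S_{\mathrm{weight}}$), Poincar\'e duality yields an isomorphism $\Phi : B \overset{\sim}{\to} A^*$. Applying $\Hom_{\Z_{(p)}}(-, \Z_{(p)})$ to the exact sequence $0 \to K \to A \to I \to 0$ realizes $I^* \hookrightarrow A^*$ as the submodule of functionals vanishing on $K$. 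By Lemma \ref{duality!}, $\Phi$ carries $I \subset B$ inside $I^* \subset A^*$, and diagram \eqref{duality-diagram} shows that the resulting factored map $\psi : I \to I^*$ is precisely $x \mapsto \langle x, \cdot \rangle$.

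Injectivity of $\psi$ is then immediate: $\psi$ is the factorization through $I^*$ of the composite $I \hookrightarrow B \overset{\sim}{\to} A^*$ of injective maps (this incidentally forces $I$ to be $\Z_{(p)}$-torsion-free, since $A^*$ is). For surjectivity, the natural move is to compare $\coker(\psi)$ with $Q$. Since $\Phi$ is an isomorphism sending $I$ into $I^*$, the chain $\psi(I) = \Phi(I) \subset I^* \subset A^* \simeq B$ induces an injection $\coker(\psi) = I^*/\psi(I) \hookrightarrow B/I = Q$. The hypothesis $p \notin S''_{\mathrm{tors}}$ is exactly the statement that $Q$ is $\Z_{(p)}$-torsion-free, hence so is $\coker(\psi)$. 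On the other hand, $\psi \otimes_{\Z_{(p)}} \Q$ is an injection between two $\Q$-vector spaces of the same finite dimension (both equal to the $\Z_{(p)}$-rank of $I$, since $I^* = \Hom_{\Z_{(p)}}(I, \Z_{(p)})$ has the same rank as $I$), hence an isomorphism; therefore $\coker(\psi)$ is $\Z_{(p)}$-torsion. Being simultaneously torsion and torsion-free, $\coker(\psi)$ vanishes, proving surjectivity.

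There is no serious obstacle: the argument is formal once the integral inclusion $\Phi(I) \subset I^*$ (Lemma \ref{duality!}) and the torsion-freeness of $Q$ are in place. The only substantive input is the control of the boundary cohomology, and this has been pre-packaged into the definition of $S''_{\mathrm{tors}}$; it is precisely to make the final torsion-plus-torsion-free step work that this set was introduced.
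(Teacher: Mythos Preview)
Your proof is correct and follows essentially the same approach as the paper: both arguments use the Poincar\'e duality isomorphism $B \overset{\sim}{\to} A^*$ from Lemma \ref{Pduality}, the inclusion $\Phi(I) \subset I^*$ from Lemma \ref{duality!}, injectivity from the composite $I \hookrightarrow B \overset{\sim}{\to} A^*$, and surjectivity by passing to $\Q$-coefficients and invoking the torsion-freeness of $B/I$ guaranteed by $p \notin S''_{\mathrm{tors}}$. The only cosmetic difference is that the paper phrases surjectivity as an element chase (lift a given $x \in I^*$ to $x' \in B$, show $x' \bmod I$ is torsion hence zero), whereas you package the same computation as $\coker(\psi) \hookrightarrow Q$ being simultaneously torsion and torsion-free.
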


\begin{proof} The commutative diagram \eqref{duality-diagram} induces the commutative diagram
\begin{equation} \label{diagram1}
\xymatrix{  
H^3(S_{K_N(3)}, {V}_{\lambda, \Z_{(p)}}) \ar[r]^-{\sim} & \ \  \Hom_{\Z_{(p)}}(H^3_c(S_{K_N(3)}, {V}_{\lambda, \Z_{(p)}}), \Z_{(p)})\\ 
H^3_!(S_{K_N(3)}, {V}_{\lambda, \Z_{(p)}}) \ \  \ 
\ar@{^{(}->}[u]
\ar@{^{(}->}[r] 
 &  \Hom_{\Z_{(p)}}(H^3_!(S_{K_N(3)}, {V}_{\lambda, \Z_{(p)}}), \Z_{(p)}) \ar@{^{(}->}[u]
\\
} 
\end{equation}
where the upper horizontal map is an isomorphism according to Lemma \ref{Pduality}. In particular, the lower horizontal map is injective. Let's prove the surjectivity of this map. The diagram obtained by tensoring over $\Z_{(p)}$ with $\Q$ the diagram above is 
\begin{equation} \label{diagram2}
\xymatrix{  
H^3(S_{K_N(3)}, {V}_{\lambda, \Q}) \ar[r]^-{\sim} & \ \  \Hom_{\Q}(H^3_c(S_{K_N(3)}, {V}_{\lambda, \Q}), \Q)\\ 
H^3_!(S_{K_N(3)}, {V}_{\lambda, \Q}) \ \  \ 
\ar@{^{(}->}[u]
\ar[r]^-{\sim} 
 &  \Hom_{\Q}(H^3_!(S_{K_N(3)}, {V}_{\lambda, \Q}), \Q). \ar@{^{(}->}[u]
\\
} 
\end{equation}
Note that the lower horizontal map is an isomorphism because it is an injection between two $\Q$-vector spaces of the same finite dimension. Let $x \in \Hom_{\Z_{(p)}}(H^3_!(S_{K_N(3)}, {V}_{\lambda, \Z_{(p)}}), \Z_{(p)})$ and let $x' \in H^3(S_{K_N(3)}, {V}_{\lambda, \Z_{(p)}})$ denote the image of $x$ by the composite of the right hand vertical map and of the inverse of the upper horizontal map of \eqref{diagram1}. The image $x'_{\Q}$ of $x'$ in $H^3(S_{K_N(3)}, {V}_{\lambda, \Q})$ is in fact an element of $H^3_!(S_{K_N(3)}, {V}_{\lambda, \Q})$. As a consequence, the image $x''$ of $x'$ in $H^3(S_{K_N(3)}, {V}_{\lambda, \Z_{(p)}})/H^3_!(S_{K_N(3)}, {V}_{\lambda, \Z_{(p)}})$ is torsion. By our assumption $p \notin S''_{\mathrm{tors}}$, this implies $x''=0$. Hence there exists $y \in H^3_!(S_{K_N(3)}, {V}_{\lambda, \Z_{(p)}})$ which maps to $x$ by the lower horizontal map of \eqref{diagram1}.
\end{proof}

Recall that by definition we have $\tilde{H}^3_!(S_{K_N}, V_{\lambda, \Z_{(p)}})=H^3_!(S_{K_N(3)}, {V}_{\lambda, \Z_{(p)}})^{K_N/K_N(3)}$.

\begin{cor}  \label{nd-inv-bilinear} Let $p \notin S_{N, 3} \cup S_{\mathrm{weight}} \cup S'_{\mathrm{tors}} \cup S''_{\mathrm{tors}}$. Then the $\Z_{(p)}$-bilinear map 
\begin{equation} \label{inv-bilinear}
\langle\,\,,\,\,\rangle:\tilde{H}^3_!(S_{K_N}, V_{\lambda, \Z_{(p)}})  \times \tilde{H}^3_!(S_{K_N}, V_{\lambda, \Z_{(p)}}) \rightarrow \Z_{(p)}
\end{equation}
obtained by restricting the bilinear map of the lower horizontal line of diagram \eqref{duality-diagram} is non-degenerate.
\end{cor}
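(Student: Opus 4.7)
The plan is to leverage Corollary \ref{dual-int}, which gives a perfect pairing on $H^3_!(S_{K_N(3)}, V_{\lambda, \Z_{(p)}})$, together with the averaging idempotent that is available because $p \notin S_{N,3}$, to descend non-degeneracy from level $K_N(3)$ to the $K_N/K_N(3)$-invariants.

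First, I would record that the pairing $\langle\,,\,\rangle$ on $H^3_!(S_{K_N(3)}, V_{\lambda, \Z_{(p)}})$ is invariant under the action of the finite group $\Gamma := K_N/K_N(3)$. This is because the action of each $g \in \Gamma$ on $S_{K_N(3)}$ is an automorphism of $S_{K_N(3)}$ covered by an isomorphism of the local system $V_{\lambda, \Z_{(p)}}$ (it is in fact the Hecke action coming from Definition \ref{def-hecke} with $g \in K_N$, where $c_1(g)$ and $c_2(g)$ are both isomorphisms), hence acts compatibly with the Poincaré pairing constructed via $p_3^!$ in the proof of Lemma \ref{Pduality}.

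Second, since $p \notin S_{N, 3}$ the cardinality $\#\Gamma$ is invertible in $\Z_{(p)}$, so the symmetrization idempotent $e = \frac{1}{\#\Gamma} \sum_{g \in \Gamma} g \in \Z_{(p)}[\Gamma]$ is well-defined and yields a $\Z_{(p)}$-module decomposition
$$
H^3_!(S_{K_N(3)}, V_{\lambda, \Z_{(p)}}) = \tilde{H}^3_!(S_{K_N}, V_{\lambda, \Z_{(p)}}) \oplus W, \qquad W := (1-e)H^3_!(S_{K_N(3)}, V_{\lambda, \Z_{(p)}}).
$$
The invariance of the pairing then makes the two summands orthogonal: if $x \in \tilde{H}^3_!(S_{K_N}, V_{\lambda, \Z_{(p)}})$ and $y \in W$, then $\langle x, y \rangle = \langle g x, g y \rangle = \langle x, g y \rangle$ for every $g \in \Gamma$, whence $\langle x, y \rangle = \langle x, e y \rangle = 0$.

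Third, I conclude non-degeneracy: suppose $x \in \tilde{H}^3_!(S_{K_N}, V_{\lambda, \Z_{(p)}})$ satisfies $\langle x, y \rangle = 0$ for every $y \in \tilde{H}^3_!(S_{K_N}, V_{\lambda, \Z_{(p)}})$. For any $z \in H^3_!(S_{K_N(3)}, V_{\lambda, \Z_{(p)}})$, decompose $z = e z + (1-e)z$; by hypothesis $\langle x, ez\rangle = 0$ and by orthogonality $\langle x, (1-e)z\rangle = 0$, so $\langle x, z\rangle = 0$ for every $z$. By the perfectness of the pairing at level $K_N(3)$ established in Corollary \ref{dual-int} this forces $x = 0$, as required.

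The only genuinely delicate point is the first step, namely the $\Gamma$-invariance of $\langle\,,\,\rangle$; the rest is formal averaging. I expect this to be a direct consequence of the functoriality of $p_3^!$ with respect to étale automorphisms of $S_{K_N(3)}$ combined with the $G_{\Z_{(p)}}$-equivariance of the pairing $[\,,\,]$ on $V_{\lambda, \Z_{(p)}}$ used to identify $V_{\lambda, \Z_{(p)}}$ with its dual, but writing it down cleanly requires unwinding the definition of $\langle\,,\,\rangle$ through the Ext spectral sequence of the proof of Lemma \ref{Pduality}.
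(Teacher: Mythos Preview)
Your proof is correct and takes essentially the same approach as the paper: both use $\Gamma$-invariance of the pairing (which the paper justifies in one line by noting that each $g \in K_N/K_N(3)$ acts by an orientation-preserving automorphism of the $\Q$-scheme $S_{K_N(3)}$) together with averaging over $\Gamma = K_N/K_N(3)$ to reduce to Corollary \ref{dual-int}. One remark: the paper in fact proves the stronger assertion that the induced map $\tilde{H}^3_!(S_{K_N}, V_{\lambda, \Z_{(p)}}) \to \Hom_{\Z_{(p)}}(\tilde{H}^3_!(S_{K_N}, V_{\lambda, \Z_{(p)}}), \Z_{(p)})$ is an isomorphism (this perfectness is what is actually invoked in Lemma \ref{lem-hida}); your orthogonal-decomposition formulation yields this at once as well, since an orthogonal direct-sum splitting of a perfect pairing restricts to perfect pairings on the summands.
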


\begin{proof} For any $g \in K_N/K_N(3)$ and any $x, y \in H^3_!(S_{K_N(3)}, {V}_{\lambda, \Z_{(p)}})$, we have $\langle g^*x, g^*y \rangle=\langle x, y \rangle$ as $g$ acts as an automorphism of the $\Q$-scheme $S_{K_N(3)}$ and hence is orientation preserving. Let us prove that the map
\begin{equation} \label{arrow1}
\tilde{H}^3_!(S_{K_N}, V_{\lambda, \Z_{(p)}}) \rightarrow \Hom_{\Z{(p)}}( \tilde{H}^3_!(S_{K_N}, V_{\lambda, \Z_{(p)}}), \Z_{(p)})
\end{equation}
defined as $x \mapsto (z \mapsto \langle x,z \rangle)$ is injective. Let $x \in \tilde{H}^3_!(S_{K_N}, V_{\lambda, \Z_{(p)}})$ .  Then, for any $g \in K_N/K_N(3)$ and any $y \in H^3_!(S_{K_N(3)}, {V}_{\lambda, \Z_{(p)}})$, we have
$
\langle x, y \rangle=\langle g^*x, g^*y \rangle=\langle x, g^*y\rangle.
$
By summing over all $g \in K_N/K_N(3)$ and dividing by $ |K_N/K_N(3)|$, we obtain
\begin{equation} \label{inv}
\langle x, y \rangle=  |K_N/K_N(3)|^{-1} \langle x,\sum_{g \in K_N/K_N(3)}g^*y \rangle.
\end{equation} Assume that $\langle x, z \rangle=0$ for any $z \in H^3_!(S_{K_N(3)}, {V}_{\lambda, \Z_{(p)}})^{K_N/K_N(3)}$. Then for any $y \in H^3_!(S_{K_N(3)}, {V}_{\lambda, \Z_{(p)}})$ we have $\langle x, y \rangle=|K_N/K_N(3)|^{-1} \langle x,\sum_{g \in K_N/K_N(3)}g^*y \rangle=0$ because $\sum_{g \in K_N/K_N(3)}g^*y$ is an element of $\tilde{H}^3_!(S_{K_N}, {V}_{\lambda, \Z_{(p)}})$. According to Corollary \ref{dual-int} this implies that $x=0$. Hence \eqref{arrow1} is injective.\\
\indent To prove its surjectivity let $\chi \in \Hom_{\Z{(p)}}( \tilde{H}^3_!(S_{K_N}, {V}_{\lambda, \Z_{(p)}}), \Z_{(p)})$. Let us denote by $\chi' \in  \Hom_{\Z{(p)}}( H^3_!(S_{K_N(3)}, {V}_{\lambda, \Z_{(p)}}), \Z_{(p)})$ the element $\chi'=\chi \circ \rho$ of where $\rho$ is the projection $H^3_!(S_{K_N(3)}, {V}_{\lambda, \Z_{(p)}}) \rightarrow \tilde{H}^3_!(S_{K_N}, {V}_{\lambda, \Z_{(p)}})$. By Corollary \ref{dual-int} there exists 
an element $x \in  H^3_!(S_{K_N(3)}, {V}_{\lambda, \Z_{(p)}})$ such that $\langle x, y \rangle=\chi'(y)$ for any $y \in H^3_!(S_{K_N(3)}, {V}_{\lambda, \Z_{(p)}})$. In particular for any element $z \in \tilde{H}^3_!(S_{K_N}, {V}_{\lambda, \Z_{(p)}})$ we have $\langle x, z \rangle=\chi'(z)=(\chi \circ \rho)(z)=\chi(z)$.  But for any such $z$ we have $\chi(z)=\langle x, z \rangle=\langle \rho(x), z \rangle$ by \eqref{inv}. Hence \eqref{arrow1} is surjective.
\end{proof}

For $g \in G(\A_f)$, we introduced the Hecke operator $T_g: H^3(S_{K_N(3)}, V_{\lambda, \Z}) \rightarrow H^3(S_{K_N(3)}, V_{\lambda, \Z})$ in definition \ref{def-hecke}. The dual Hecke operator $T_g^*: H^3_c(S_{K_N(3)}, V_{\lambda, \Z}) \rightarrow H^3_c(S_{K_N(3)}, V_{\lambda, \Z})$ is deduced from the sequence of maps defining $T_g$ by applying the Verdier duality functor 
$$
\mathbb{D}(X)=R\underline{\Hom}(X,p_3^! \Z)
$$
and using the fact that $\mathbb{D} f_*=f_! \mathbb{D}$ and $\mathbb{D} f^*=f^! \mathbb{D}$. Let us denote again by $T_g$ and $T_g^*$ the endomorphisms deduced after extending scalars to $\Z_{(p)}$. Then we have $\langle T_g x, y \rangle=\langle x, T_g^* y \rangle$ for any $g \in G(\A_f)$ and any $x \in H^3(S_{K_N(3)}, {V}_{\lambda, \Z_{(p)}})$, $y \in H^3_c(S_{K_N(3)}, {V}_{\lambda, \Z_{(p)}})$. 

\begin{lem} Assume that $p \notin S_{N,3} \cup S_{\mathrm{weight}} \cup S_{\mathrm{tors}} \cup S'_{\mathrm{tors}} \cup S''_{\mathrm{tors}}$ where $S_{K_N/K_N(3)}$, $S_{\mathrm{tors}}$, $S'_{\mathrm{tors}}$, $S_{\mathrm{weight}}$ and $S''_{\mathrm{tors}}$ are defined by \eqref{s_G}, \eqref{s-tors}, \eqref{s'-tors}, \eqref{s-weight} and \eqref{s''-tors} respectively. Then we have a $\Z_{(p)}$-linear pairing
\begin{equation} \label{pairing2}
L(\Pi_f, V_{\lambda, \Z_{(p)}}) \times L({\Pi}_f, V_{\lambda, \Z_{(p)}}) \rightarrow \Z_{(p)}.
\end{equation}
\end{lem}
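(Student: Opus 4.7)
The plan is very short: the claimed pairing is obtained simply by restriction. By Corollary \ref{nd-inv-bilinear}, under the stated hypotheses on $p$, we have already at our disposal a $\Z_{(p)}$-bilinear (in fact non-degenerate) pairing
$$
\langle\,,\,\rangle: \tilde{H}^3_!(S_{K_N}, V_{\lambda, \Z_{(p)}}) \times \tilde{H}^3_!(S_{K_N}, V_{\lambda, \Z_{(p)}}) \longrightarrow \Z_{(p)}.
$$
Hence all that is required is to observe the inclusion of $L(\Pi_f, V_{\lambda, \Z_{(p)}})$ inside the ambient integral invariant inner cohomology, which is immediate from the very definition \eqref{lattice}
$$
L(\Pi_f, V_{\lambda, \Z_{(p)}}) = M(\Pi_f, V_{\lambda, \Q}) \cap \tilde{H}^3_!(S_{K_N}, V_{\lambda, \Z_{(p)}}).
$$

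I would then simply restrict $\langle\,,\,\rangle$ to the subset $L(\Pi_f, V_{\lambda, \Z_{(p)}}) \times L(\Pi_f, V_{\lambda, \Z_{(p)}})$ of $\tilde{H}^3_!(S_{K_N}, V_{\lambda, \Z_{(p)}}) \times \tilde{H}^3_!(S_{K_N}, V_{\lambda, \Z_{(p)}})$ and note that, since the target $\Z_{(p)}$ is unchanged, we obtain a well defined $\Z_{(p)}$-bilinear pairing as claimed. No substantive difficulty arises: the role of the hypothesis $p \notin S_{N,3} \cup S_{\mathrm{weight}} \cup S_{\mathrm{tors}} \cup S'_{\mathrm{tors}} \cup S''_{\mathrm{tors}}$ is entirely absorbed into Corollary \ref{nd-inv-bilinear} (and the existence of the $G_{\Z_{(p)}}$-equivariant self-duality of $V_{\lambda, \Z_{(p)}}$ used to construct it). The fact that the pairing is meaningful on the $\Pi_f$-isotypical lattice, rather than being identically zero, is a separate question which will be addressed subsequently using the decomposition \eqref{dec_again} and the compatibility of Poincaré duality with the Hecke action (recorded just above the lemma as $\langle T_g x, y \rangle = \langle x, T_g^* y \rangle$, together with Proposition \ref{adjoint-hecke}); but for the statement as formulated here, only the tautological restriction is needed.
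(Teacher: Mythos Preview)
Your argument is correct: since $L(\Pi_f, V_{\lambda, \Z_{(p)}})$ is by definition a $\Z_{(p)}$-submodule of $\tilde{H}^3_!(S_{K_N}, V_{\lambda, \Z_{(p)}})$, restricting the pairing of Corollary~\ref{nd-inv-bilinear} immediately produces the required $\Z_{(p)}$-bilinear form. For the statement as written, nothing more is needed.

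The paper's own proof follows the same restriction idea but adds a conceptual step you have deferred. It observes that Poincar\'e duality naturally pairs the $\Pi_f$-isotypical piece with the $\check{\Pi}_f$-isotypical piece, and then invokes the fact that $\Pi$ has trivial central character (hence $\Pi_f \simeq \check{\Pi}_f$, by \cite[Lemma 1.1]{weissauer1}) to identify $L(\Pi_f, V_{\lambda, \Z_{(p)}})$ with $L(\check{\Pi}_f, V_{\lambda, \Z_{(p)}})$. Strictly speaking this is not required for the bare existence claim---your tautological restriction already gives a pairing---but it is the reason the restricted pairing is not identically zero, which is what makes the discriminant $d(\Pi_f)$ in the next section meaningful. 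You correctly flag this as a separate issue; the paper simply folds it into the construction here.
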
 

\begin{proof} Recall that in particular, $p$ does not divide the order of $K_N/K_N(3)$, and so the $\Z_{(p)}$-module $\tilde{H}^3_!(S_{K_N}, {V}_{\lambda, \Z_{(p)}})$ is a direct factor of $H^3_!(S_{K_N(3)}, V_{\lambda, \Z_{(p)}})$. As $L(\Pi_f, V_{\lambda, \Z_{(p)}})$ is a $\Z_{(p)}$-torsion free quotient of $\tilde{H}^3_!(S_{K_N}, {V}_{\lambda, \Z_{(p)}})$ by definition, it is also a direct factor of $H^3_!(S_{K_N(3)}, V_{\lambda, \Z_{(p)}})$. Let $\check{\Pi}_f$ be the contragredient of $\Pi_f$. Note that, as the representation $V_{\lambda, \Z_{(p)}}$ has trivial central character, the representation $\check{\Pi}_f$ contributes to the cohomology with coefficients $V_{\lambda, \Z_{(p)}}$. Similarly as before, the $\Z_{(p)}$-module $M(\check{\Pi}_f, {V}_{\lambda, \Z_{(p)}})$ is a direct factor of $H^3_!(S_{K_N(3)}, {V}_{\lambda, \Z_{(p)}})$. Hence the $\Z_{(p)}$-module
$L(\Pi_f, V_{\lambda, \Z_{(p)}}) \times L(\check{\Pi}_f, {V}_{\lambda, \Z_{(p)}})$ is a direct factor of $H^3_!(S_{K_N(3)}, V_{\lambda, \Z_{(p)}}) \times H^3_!(S_{K_N(3)}, V_{\lambda, \Z_{(p)}})$ and by restricting the pairing \eqref{duality-diagram} we obtain a pairing
$
L(\Pi_f, V_{\lambda, \Z_{(p)}}) \times L(\check{\Pi}_f, V_{\lambda, \Z_{(p)}}) \rightarrow \Z_{(p)}.
$
One of the assumptions on $\Pi$ in section \ref{after_ichino}, is that the central character of $\Pi$ is trivial. According to \cite[Lemma 1.1]{weissauer1}, this implies that $\Pi \simeq \check{\Pi}$ where $\check{\Pi}$ is the contragredient of $\Pi$. This concludes the proof.
\end{proof}

The following results will be useful in the next section. Let $T'$ be the maximal compact subtorus of $\mathrm{Sp}(4, \mathbb{R})$ defined by
$$
T'= \left\{ \begin{pmatrix}
x &  & y & \\
 & x' &  & y'\\
-y &  & x & \\
 & -y' &  & x' \\
\end{pmatrix} \,\,|\,\, x^2+y^2=x'^2+y'^2=1 \right\}.
$$
The Lie algebra of $T'$ is the compact Cartan subalgebra of $\mathfrak{sp}_4$ that we denoted by $\mathfrak{h}$ in section \ref{discrete_series_classification}. Let $\mbb{R}^\times_+$ be the identity component of the center of $G(\mbb{R})$. For integers $n, n', c$ such that $n+n' \equiv c \pmod 2$, let $\lambda'(n, n', c): \mbb{R}^\times_+ T' \longrightarrow \mathbb{C}^\times$ denote the character defined by
$$
\begin{pmatrix}
x &  & y & \\
 & x' &  & y'\\
-y &  & x & \\
 & -y' &  & x' \\
\end{pmatrix} \longmapsto (x+iy)^n (x'+iy')^{n'}(x^2+y^2)^\frac{c-n-n'}{2},
$$
and by $\lambda'(n, n')$ the restriction of $\lambda'(n, n', c)$ to $T'$. Note that the simple root $e_1-e_2$, respectively $2e_2$, defined in section \ref{discrete_series_classification}, coincides with the differential at the identity matrix of the restriction to $T'$ of the character $\lambda'(1, -1, 0)$, respectively $\lambda(0, 2, 0)$. Let $J \in \mrm{Sp}(4, \C)$ be the matrix
$$
J=\frac{1}{\sqrt{2}} \begin{pmatrix}
1 &  & i & \\
 & 1 &  & i\\
i &  & 1 &  \\
 & i  &  & 1 \\
\end{pmatrix}.
$$

\begin{lem} \label{tweight-t'weight} Let $w \in V_{\lambda, \C}$ be a vector of weight $\lambda(u, u', c)$ for the action of the algebraic torus T. For the action of the torus $\R^\times_+ T'$, the vector $v=Jw$ has weight $\lambda'(u,u',c)$ and the vector $\overline{v}=\overline{J}w$ has weight $\lambda'(-u,-u', c)$.
\end{lem}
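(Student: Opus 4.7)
The strategy is to reduce the lemma to a direct matrix conjugation computation, showing that $J^{-1}$ conjugates $\mathbb{R}_+^\times T'$ into the complexified diagonal torus $T(\mathbb{C})$, and then matching the characters $\lambda(u,u',c)$ and $\lambda'(u,u',c)$ under this conjugation.

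First I would parametrize $t' \in \mathbb{R}_+^\times T'$ in block form as $r \bigl(\begin{smallmatrix} A & B \\ -B & A \end{smallmatrix}\bigr)$ with $r > 0$, $A = \mathrm{diag}(x,x')$, $B = \mathrm{diag}(y,y')$ and $x^2+y^2 = x'^2+y'^2 = 1$. Writing $J$ and $J^{-1}$ in block form as $J = \tfrac{1}{\sqrt 2}\bigl(\begin{smallmatrix} I & iI \\ iI & I \end{smallmatrix}\bigr)$ and $J^{-1} = \tfrac{1}{\sqrt 2}\bigl(\begin{smallmatrix} I & -iI \\ -iI & I \end{smallmatrix}\bigr)$, and setting $C = A+iB = \mathrm{diag}(z,z')$ with $z = x+iy$, $z' = x'+iy'$, a straightforward block multiplication gives
$$
J^{-1} t' J = \mathrm{diag}(rz,\ rz',\ r\bar z,\ r\bar z'),
$$
which is the element of $T(\mathbb{C})$ with $(\alpha_1,\alpha_2,\nu)=(rz,rz',r^2)$ in the parametrization of Section \ref{introGSp}.

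Next I would substitute into the definitions of the two characters. Direct evaluation yields
$$
\lambda(u,u',c)(J^{-1}t'J) = (rz)^u(rz')^{u'}(r^2)^{(c-u-u')/2} = r^c z^u z'^{u'},
$$
and a similar unwinding of the formula defining $\lambda'(u,u',c)$ on $\mathbb{R}_+^\times T'$ produces the same value $r^c z^u z'^{u'}$. Since $w$ is a $T$-weight vector of weight $\lambda(u,u',c)$, we obtain
$$
t' v \;=\; t' (Jw) \;=\; J(J^{-1}t'J)w \;=\; \lambda(u,u',c)(J^{-1}t'J)\, Jw \;=\; \lambda'(u,u',c)(t')\, v,
$$
which proves the first assertion.

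For the second assertion I note that $\bar J = J^{-1}$, so $\bar v = J^{-1} w$, and I would compute in the same fashion that $J t' J^{-1} = \mathrm{diag}(r\bar z, r\bar z', rz, rz')$. Evaluating $\lambda(u,u',c)$ on this element produces $r^c \bar z^u \bar z'^{u'}$, and since $\bar z = z^{-1}$ and $\bar z' = z'^{-1}$ on $T'$, this equals $r^c z^{-u} z'^{-u'} = \lambda'(-u,-u',c)(t')$. Consequently
$$
t' \bar v \;=\; J^{-1}(J t' J^{-1}) w \;=\; \lambda'(-u,-u',c)(t')\, \bar v.
$$
No serious obstacle is expected: the proof is bookkeeping of a $4\times 4$ block conjugation, and the only point worth double-checking is that the $\mathbb{R}_+^\times$ scalar $r$ contributes the same factor $r^c$ on both sides, which follows immediately from the exponents $(c-u-u')/2$ appearing in the definitions of $\lambda$ and $\lambda'$.
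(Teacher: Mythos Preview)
Your argument is correct and follows the same approach as the paper: reduce to showing that conjugation by $J$ (respectively $\overline{J}=J^{-1}$) carries $\R_+^\times T'$ into $T(\C)$ and then match the characters. The paper cites the first assertion from \cite[Lemma~4.25]{lemma2} and writes out only the second conjugation (and in fact only for the specific weight $\lambda(-k,k',c)$ needed later), whereas you supply both computations in full generality; your observation that $\overline{J}=J^{-1}$ makes the second case an immediate consequence of the first via complex conjugation is a nice shortcut.
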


\begin{proof} The first statement follows from \cite[Lemma 4.25]{lemma2}. The second statement follows from the fact that
\begin{eqnarray*}
\overline{J}^{-1}\begin{pmatrix}
x &  & y & \\
 & x' &  & y'\\
-y &  & x & \\
 & -y' &  & x' \\
\end{pmatrix} \overline{J} &=& J \begin{pmatrix}
x &  & y & \\
 & x' &  & y'\\
-y &  & x & \\
 & -y' &  & x' \\
\end{pmatrix} \overline{J}\\
&=& \begin{pmatrix}
x-iy &  &  & \\
 & x'-iy' &  & \\
 &  & x+iy & \\
 &  &  & x'+iy' \\
\end{pmatrix}
\end{eqnarray*}
and hence
\begin{eqnarray*}
\begin{pmatrix}
x &  & y & \\
 & x' &  & y'\\
-y &  & x & \\
 & -y' &  & x' \\
\end{pmatrix} \overline{J}w &=& \overline{J}  \begin{pmatrix}
x-iy &  &  & \\
 & x'-iy' &  & \\
 &  & x+iy & \\
 &  &  & x'+iy' \\
\end{pmatrix} w\\
&=& (x-iy)^{-k}(x'-iy')^{k'}(x^2+y^2)^{\frac{c+k-k'}{2}} \overline{J}w\\
&=&  (x+iy)^k (x'+iy')^{-k'} (x^2+y^2)^{\frac{c-k+k'}{2}}\overline{J}w\\
&=& \lambda'(k,-k',c) \left( \begin{pmatrix}
x &  & y & \\
 & x' &  & y'\\
-y &  & x & \\
 & -y' &  & x' \\
\end{pmatrix} \right) \overline{J}w.
\end{eqnarray*}
\end{proof}

\begin{lem} \label{norm-v} Let $w \in V_{\lambda, \C}$ be a vector of weight $\lambda(-k, k', 0)$, let
$
[\,,\,]_{\C}: V_{\lambda, \C} \otimes V_{\lambda, \C} \rightarrow \C
$
denote the pairing obtained from $[\,,\,]$ after extending scalars to $\C$. Then
$$
[Jw, \overline{J}w] \neq 0.
$$
\end{lem}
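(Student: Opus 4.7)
The plan is to identify $Jw$ and $\overline{J}w$ as nonzero generators of one-dimensional weight spaces of $V_{\lambda, \C}$ for the torus $\R^\times_+ T'$, whose weights are opposite to each other, and then invoke the non-degeneracy of $[\,,\,]_\C$ to conclude. Since $[\,,\,]$ is $G_{\Z_{(p)}}$-equivariant, its complexification $[\,,\,]_\C$ is $G(\C)$-invariant, and in particular invariant under the real torus $\R^\times_+ T' \subset G(\R)$. It follows that for weight vectors $u$ of weight $\mu$ and $v$ of weight $\nu$, the pairing $[u, v]_\C$ vanishes unless $\mu \nu = 1$, and that the restriction of $[\,,\,]_\C$ to $(V_{\lambda, \C})_\mu \times (V_{\lambda, \C})_{-\mu}$ is non-degenerate for every character $\mu$. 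By Lemma \ref{tweight-t'weight}, $Jw$ and $\overline{J}w$ are $\R^\times_+ T'$-weight vectors of weights $\lambda'(-k, k', 0)$ and $\lambda'(k, -k', 0)$ respectively, which are inverses of one another, so the $T'$-invariance alone does not force $[Jw, \overline{J}w]_\C$ to vanish.

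It then remains to check that the two weight spaces involved are one-dimensional, so that $Jw$ and $\overline{J}w$ must generate them. The weight $\lambda(-k, k', 0)$ lies in the Weyl-group orbit of the highest weight $\lambda(k, k', 0)$---one passes from $(k, k')$ to $(-k, k')$ by composing three simple reflections in the $C_2$ Weyl group---so its $T$-weight space in the irreducible representation $V_{\lambda, \C}$ has multiplicity one, and is spanned by $w$ (which I take to be nonzero, otherwise the statement is vacuous). Because $J$ and $\overline{J}$ are invertible and, by Lemma \ref{tweight-t'weight}, carry $T$-weight vectors of weight $\lambda(u, u', c)$ to $\R^\times_+ T'$-weight vectors of weights $\lambda'(u, u', c)$ and $\lambda'(-u, -u', c)$ respectively, the images $Jw$ and $\overline{J}w$ are nonzero and span the corresponding one-dimensional $\R^\times_+ T'$-weight spaces. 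Non-degeneracy of the restricted pairing on these one-dimensional spaces then forces $[Jw, \overline{J}w]_\C \neq 0$.

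The main subtlety is to reconcile the two different torus actions ($T$ versus $\R^\times_+ T'$) and their respective weight labelings cleanly; once this is handled via Lemma \ref{tweight-t'weight}, the argument reduces to the standard fact that weight spaces in the Weyl-group orbit of the highest weight are one-dimensional, so no substantial obstacle remains.
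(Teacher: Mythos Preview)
Your proof is correct and follows essentially the same approach as the paper: use $T'$-equivariance of the pairing to reduce to weight spaces with opposite weights, observe via Lemma~\ref{tweight-t'weight} that $Jw$ and $\overline{J}w$ have weights $\lambda'(-k,k',0)$ and $\lambda'(k,-k',0)$, note that these weight spaces are one-dimensional since $\lambda(-k,k',0)$ lies in the Weyl orbit of the highest weight, and conclude from non-degeneracy. The paper phrases the final step as a contradiction (if the pairing vanished, $Jw$ would lie in the radical), whereas you invoke directly that the restricted pairing on one-dimensional opposite weight spaces is non-degenerate; these are the same argument.
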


\begin{proof}The pairing $[\,,\,]_{\C}$ is $G_{\C}$-equivariant. In particular, given two vectors $v_1$ and $v_2$ of weights $\lambda'(u_1, u'_1, 0)$ and $\lambda'(u_2, u'_2, 0)$, we have 
$$
[v_1, v_2]_{\C} \neq 0 \iff u_1+u_2=u'_1+u'_2=0.
$$
The weight $\lambda(-k,k',0)$ belongs to the orbit under the action of $W$ of the dominant weight $\lambda(k,k',0)$, hence has multiplicity one in $V_{\lambda, \C}$. Then it follows from Lemma \ref{tweight-t'weight} that $\lambda'(-k, k', 0)$, which is the weight of $Jw$, has multiplicity one in $V_{\lambda, \C}$ and a similar argument applies to $\lambda(k, -k', 0)$, which is the weight of $\overline{J}w$. As a consequence, if $[Jw, \overline{J}w]_{\C} = 0$, then $[Jw, w']_{\C}=0$ for any $w' \in V_{\lambda, \C}$. This contradicts the fact that $[\,,\,]_{\C}$ is non-degenerate. Hence $
[Jw, \overline{J}w] \neq 0.
$
\end{proof}

Let 
$$
X_{(1, -1)}=d \kappa \left(  \begin{pmatrix}
 & 1 \\
 &  \\
\end{pmatrix}\right)= \frac{1}{2} \begin{pmatrix}
 & 1 &  & -i\\
-1 &  & -i & \\
 & i &  & 1 \\
i &   & -1 & \\
\end{pmatrix} \in \mathfrak{k}_{\C}
$$
and let
$$
X_{(-1, 1)}=d \kappa \left(  \begin{pmatrix}
 &  \\
1 &  \\
\end{pmatrix}\right)= \frac{1}{2} \begin{pmatrix}
 & 1 &  & i\\
-1 &  & i & \\
 & -i &  & 1 \\
-i &   & -1 & \\
\end{pmatrix} \in \mathfrak{k}_{\C}
$$
These are root vectors corresponding to the positive (resp. negative), compact root. Let us denote $v=Jw$ and $\overline{v}=\overline{J}w$. 

\begin{lem} \label{pairing-computation} For any $i, j \in \Z$ we have
$$
\left[X_{(1,-1)}^i v, X_{(-1,1)}^j \overline{v}\right]_{\C}=\left\{
    \begin{array}{ll}
        0 & \mbox{ if } i \neq j \\
        (-1)^i \frac{i! (k+k')!}{(k+k'-i)!}[v, \overline{v}] & \mbox{ if } i = j.
    \end{array}
\right.
$$
\end{lem}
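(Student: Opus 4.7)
The plan is to transfer all copies of $X_{(1,-1)}$ to the right argument using infinitesimal equivariance of $[\,,\,]_{\C}$, and then exploit the fact that $\overline{v}$ is a highest weight vector for the $\mathfrak{sl}_2$-subalgebra of $\mathfrak{k}_{\C}$ generated by $X_{(1,-1)}$ and $X_{(-1,1)}$. Since $[\,,\,]$ is $G$-equivariant with $\Z_{(p)}$ carrying the trivial action, its complexification $[\,,\,]_{\C}$ satisfies $[Xa, b]_{\C} + [a, Xb]_{\C} = 0$ for every $X \in \mathfrak{g}_{\C}$. Iterating this identity with $X = X_{(1,-1)}$ I obtain
\[
[X_{(1,-1)}^i v, X_{(-1,1)}^j \overline{v}]_{\C} = (-1)^i [v,\, X_{(1,-1)}^i X_{(-1,1)}^j \overline{v}]_{\C}.
\]

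Next I would set up the relevant $\mathfrak{sl}_2$-triple. Because $d\kappa : \mathfrak{gl}_{2,\C} \to \mathfrak{k}_{\C}$ is a Lie algebra isomorphism and $X_{(1,-1)} = d\kappa(e_{12})$, $X_{(-1,1)} = d\kappa(e_{21})$ for the elementary matrices $e_{ij}$, the elements $E := X_{(1,-1)}$, $F := X_{(-1,1)}$, $H := [E,F] = d\kappa(e_{11}-e_{22})$ form a standard $\mathfrak{sl}_2$-triple. From $d\kappa(e_{11}) = -iT_1$, $d\kappa(e_{22}) = -iT_2$, the identities $e_1(T_1) = e_2(T_2) = i$, and Lemma \ref{tweight-t'weight} (which gives $\overline{v}$ the $T'$-weight $\lambda'(k,-k',0)$), I would deduce $H\overline{v} = (k+k')\overline{v}$. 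The crucial observation is that $E\overline{v} = 0$: otherwise $V_{\lambda,\C}$ would contain a vector of $T'$-weight $(k+1)e_1 + (-k'-1)e_2$, whose Weyl-conjugate dominant weight is $(k+1, k'+1)$, strictly larger than the highest weight $(k,k')$ in the dominance order, a contradiction. Hence $\overline{v}$ generates an irreducible $\mathfrak{sl}_2$-subrepresentation of dimension $k+k'+1$.

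From the standard $\mathfrak{sl}_2$ identity $EF^m \overline{v} = m(k+k'-m+1) F^{m-1}\overline{v}$ applied iteratively, one obtains
\[
E^i F^j \overline{v} = \frac{j!}{(j-i)!} \prod_{\ell=0}^{i-1}(k+k'-j+\ell+1)\; F^{j-i}\overline{v}
\]
for $0 \leq i \leq j$, and $E^i F^j \overline{v} = 0$ for $i > j$. To conclude: if $i > j$ the right-hand side of the first display vanishes; if $i < j$, the vector $F^{j-i}\overline{v}$ has $T'$-weight $(k-j+i)e_1 + (-k'+j-i)e_2$, whose sum with the weight $-ke_1 + k'e_2$ of $v$ equals $(i-j)(e_1-e_2) \neq 0$, so $T'$-equivariance of $[\,,\,]_{\C}$ (the same weight argument as in the proof of Lemma \ref{norm-v}) forces the pairing to vanish; and if $i=j$, the product simplifies to $(k+k')!/(k+k'-i)!$, which combined with the sign $(-1)^i$ yields the claimed identity. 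The only non-routine ingredient is the verification that $E\overline{v} = 0$, which is the short highest-weight argument given above.
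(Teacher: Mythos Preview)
Your argument is correct and follows essentially the same route as the paper: transfer the $X_{(1,-1)}$'s across the pairing by infinitesimal invariance, then compute $X_{(1,-1)}^i X_{(-1,1)}^j \overline{v}$ using that $\overline{v}$ is an $\mathfrak{sl}_2$-highest weight vector of weight $k+k'$, and dispose of the $i\neq j$ case by a weight count. The paper is slightly more terse: it packages the $\mathfrak{sl}_2$-computation by noting that the $\C[K_\infty]$-submodule generated by $\overline{v}$ is an irreducible $\tau$ of dimension $k+k'+1$ and then appeals to the standard-basis formulas \eqref{formula5}--\eqref{formula6}, whereas you set up the triple $(E,F,H)$ directly and verify $E\overline{v}=0$ by the dominance argument; these are the same content.
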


\begin{proof} The first statement follows from weight reasons. Let us prove the second one. As $[\,,\,]_{\C}$ is compatible with the action of the Lie algebra $\mathfrak{k}_{\C}$, we have 
$$
\left[X_{(1,-1)}^i v, X_{(-1,1)}^i \overline{v}\right]_{\C}=(-1)^i \left[v, X_{(1,-1)}^i  X_{(-1,1)}^i \overline{v}\right]_{\C}
$$
The irreducible sub $\C[K_\infty]$-module of $V_{\lambda, \C}$ generated by the vector $\overline{v}$ is equivalent to $\tau_{(k', -k)}$ and $\overline{v}$ is a highest weight vector of it. By an easy inductive application of \eqref{formula5} and \eqref{formula6} we obtain 
$
 X_{(1,-1)}^i  X_{(-1,1)}^i \overline{v}=\frac{i! (k+k')!}{(k+k'-i)!} \overline{v}$ and the conclusion follows.
\end{proof}

\section{Definition of the periods} For $\Pi_\infty \in P(V_{\lambda, \C})$ we have 
$$
 H^3(\mathfrak{g}_{\C},K_\infty, V_{\lambda, \C} \otimes \Pi_\infty)=\mrm{Hom}_{K_\infty}\left( \bigwedge^3 \mathfrak{g}_{\C}/\mathfrak{k}_{\C}', V_{\lambda, \C} \otimes \Pi_\infty \right)
$$
and it is known that this $\C$-vector space is one-dimensional (see \cite[Proposition 3.7]{lemma2} for example). The character $\lambda$ has the form $\lambda=\lambda(k,k',c)$ for $k \geq k' \geq 0$ and $k+k' \equiv c \pmod 2$.  Recall that $\Pi_\infty^W|_{G(\R)_+}=\Pi_\infty^{2,1} \oplus \Pi_\infty^{1,2}$.

\begin{lem} \label{archi-vector}
Let $\phi \in \Pi_\infty^{2,1}$ be a lowest weight vector of the minimal $K_\infty$-type, let $w \in V_{\lambda, \C}$ be a vector of weight $\lambda(-k, k', c)$ and let $v=Jw \in V_{\lambda, \C}$. 
\begin{enumerate}
\item[\rm{(1)}] There exists a unique non-zero element 
$$
[\phi, v] \in \mrm{Hom}_{K_\infty}\left( \bigwedge^3 \mathfrak{g}_{\C}/\mathfrak{k}_{\C}', V_{\lambda, \C} \otimes \Pi_\infty^W \right)
$$
such that
$$
[\phi, v](X_{(2, 0)} \wedge X_{(1, 1)} \otimes X_{(0, -2)})=\sum_{i = 0}^{k+k'} (-1)^i  X_{(1, -1)}^i v \otimes {X}_{(1, -1)}^{k+k'+4-i} \phi
$$
where $X_{(2, 0)}$, $X_{(1, 1)} \in \mathfrak{p}^+$ and $X_{(0, -2)} \in \mathfrak{p}^-$ are the root vectors defined in section \ref{discrete_series_classification}. 
\item[\rm{(2)}] The map $[\phi, v]$ factors through the canonical projection
$$\bigwedge^3 \mathfrak{g}_{\C}/\mathfrak{k}_{\C}' \rightarrow \bigwedge^2 \mathfrak{p}^+ \otimes_{\C} \mathfrak{p}^- \rightarrow \tau_{(3,-1)}$$
where $\tau_{(3,-1)}$ is the irreducible sub $\C[K_\infty]$-module of $\bigwedge^2 \mathfrak{p}^+ \otimes_{\C} \mathfrak{p}^-$ generated by the highest weight vector $X_{(2, 0)} \wedge X_{(1, 1)} \otimes X_{(0, -2)}$.
\end{enumerate}
\end{lem}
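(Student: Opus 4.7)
The plan is to exhibit the displayed right-hand side as a highest weight vector of a copy of $\tau_{(3,-1)}$ inside the $K_\infty$-module $V_{\lambda,\C}\otimes \Pi_\infty^{2,1}$, and then invoke Schur's lemma together with the one-dimensionality of $\Hom_{K_\infty}(\bigwedge^3\mathfrak{g}_\C/\mathfrak{k}'_\C,V_{\lambda,\C}\otimes\Pi_\infty^{2,1})$, which is the $(\mathfrak{g}_\C,K_\infty)$-cohomology space recalled at the start of this section, to deduce both parts of the lemma. Although the target in the statement is $V_{\lambda,\C}\otimes\Pi_\infty^W$, every summand of the prescribed image visibly lies in $V_{\lambda,\C}\otimes\Pi_\infty^{2,1}$, so that $K_\infty$-equivariance forces the whole map into the one-dimensional piece of the ambient Hom space.

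First I would compute the $T'$-weight of each summand $X_{(1,-1)}^i v\otimes X_{(1,-1)}^{k+k'+4-i}\phi$. By Lemma~\ref{tweight-t'weight}, $v=Jw$ has $T'$-weight $(-k,k')$; the lowest weight vector $\phi$ of the minimal $K_\infty$-type $\tau_{(k+3,-k'-1)}$ of $\Pi_\infty^{2,1}$ has weight $(-k'-1,k+3)$; and $X_{(1,-1)}$ raises the weight by $e_1-e_2$. Summing the two contributions gives $(3,-1)$, the highest weight of $\tau_{(3,-1)}$. A direct Leibniz computation combined with the reindexing $j=i+1$ in one of the two resulting sums then shows that $X_{(1,-1)}$, acting diagonally on the displayed sum, telescopes to the two boundary terms
$$
v\otimes X_{(1,-1)}^{k+k'+5}\phi \;+\;(-1)^{k+k'}\,X_{(1,-1)}^{k+k'+1}v\otimes X_{(1,-1)}^{4}\phi.
$$
The first term vanishes by \eqref{formula5}: since $\dim\tau_{(k+3,-k'-1)}=k+k'+5$ and $\phi=\text{\boldmath$v$}_0$, one has $X_{(1,-1)}^s\phi=s!\,\text{\boldmath$v$}_s=0$ for $s\geq k+k'+5$. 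The second term vanishes because the weight $(-k,k')$ has multiplicity one in $V_{\lambda,\C}$ by the Weyl-orbit argument used in Lemma~\ref{norm-v}, which forces $v$ to be the lowest weight vector of an irreducible $K_\infty$-subrepresentation of $V_{\lambda,\C}$ isomorphic to $\tau_{(k',-k)}$ of dimension $k+k'+1$, and hence $X_{(1,-1)}^{k+k'+1}v=0$.

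Non-vanishing of the right-hand side follows from the observation that the $k+k'+1$ summands lie in mutually distinct bi-weight spaces under the diagonal $T'$-action on $V_{\lambda,\C}\otimes\Pi_\infty^{2,1}$ and each individual summand is non-zero. With these facts in hand I would define $[\phi,v]$ by sending the highest weight vector $X_{(2,0)}\wedge X_{(1,1)}\otimes X_{(0,-2)}$ of $\tau_{(3,-1)}\subset\bigwedge^2\mathfrak{p}^+\otimes\mathfrak{p}^-$ (see Lemma~\ref{std-basis}) to the prescribed sum, extending $K_\infty$-equivariantly on the generated $\tau_{(3,-1)}$-submodule by Schur's lemma, and setting $[\phi,v]\equiv 0$ on every other $K_\infty$-isotypic component of $\bigwedge^3\mathfrak{g}_\C/\mathfrak{k}'_\C$. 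This produces a non-zero element of the relevant Hom space which by construction factors through the projection of part (2); uniqueness is immediate from the one-dimensionality of the ambient Hom space, since any other candidate must be a scalar multiple of $[\phi,v]$ and the prescribed non-zero value at $X_{(2,0)}\wedge X_{(1,1)}\otimes X_{(0,-2)}$ pins down the scalar.

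The main obstacle is the Leibniz--index-shift cancellation leading to $X_{(1,-1)}\cdot\mathrm{RHS}=0$; it hinges on the identification of the irreducible $K_\infty$-submodule of $V_{\lambda,\C}$ generated by $v$ as $\tau_{(k',-k)}$, with $v$ as its lowest weight vector, which itself follows from the multiplicity-one property of the extremal weight $(-k,k')$. Once this identification is in place, the remainder of the argument is essentially formal.
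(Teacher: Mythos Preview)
Your proposal is correct and follows essentially the same route as the paper: verify that the prescribed sum has $T'$-weight $(3,-1)$, check via a Leibniz/telescoping computation that it is annihilated by $X_{(1,-1)}$ (the paper silently drops the same two boundary terms you identify), and then appeal to the one-dimensionality of the Hom space for existence, uniqueness, and the factorisation in (2). One small tightening: multiplicity one of the weight $(-k,k')$ does not by itself force $v$ to be a $K_\infty$-lowest weight vector; the clean reason $X_{(-1,1)}v=0$ is that $(-k-1,k'+1)$ lies outside the weight polytope of $V_{\lambda,\C}$ (its first coordinate is smaller than $-k$), so this weight does not occur.
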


\begin{proof} To prove the existence part of statement (1), note that $X_{(2, 0)} \wedge X_{(1, 1)} \otimes X_{(0, -2)}$ is a highest weight vector of weight $\lambda'(3, -1,0)$ of the $\C[K_\infty]$-module $\bigwedge^3 \mathfrak{g}_{\C}/\mathfrak{k}_{\C}'$. Hence, as we know that $\mrm{Hom}_{K_\infty}\left( \bigwedge^3 \mathfrak{g}_{\C}/\mathfrak{k}_{\C}', V_{\lambda, \C} \otimes \Pi_\infty^W \right)$ has dimension $1$, to define a morphism in this space it is enough to define the image of $X_{(2, 0)} \wedge X_{(1, 1)} \otimes X_{(0, -2)}$, under the condition that this image is a vector of $V_{\lambda, \C} \otimes \Pi_\infty^W$ which is a highest weight vector and which has the same weight as $X_{(2, 0)} \wedge X_{(1, 1)} \otimes X_{(0, -2)}$. According to Lemma \ref{tweight-t'weight}, the vector $v$ has weight $\lambda'(-k, k', c)$ and hence the vector $X_{(1, -1)}^i v$ has weight $\lambda'(-k+i, k'-i,c)$. On the other hand according to Proposition \ref{lpaquet}, the vector $\phi \in \Pi_\infty^W$ has weight $\lambda'(-k'-1, k+3, -c)$ and hence the vector $X_{(1, -1)}^{k+k'+4-i} \phi$ has weight $\lambda'(k+3-i, -k'-1-i, -c)$. As a consequence, the vector $\sum_{i = 0}^{k+k'} (-1)^i  X_{(1, -1)}^i v \otimes {X}_{(1, -1)}^{k+k'+4-i} \phi$ has the same weight as $X_{(2, 0)} \wedge X_{(1, 1)} \otimes X_{(0, -2)}$. Furthermore
$$
X_{(1,-1)} \left( \sum_{i = 0}^{k+k'} (-1)^i  X_{(1, -1)}^i v \otimes {X}_{(1, -1)}^{k+k'+4-i} \phi \right)
$$
\begin{align*}
& =\sum_{i = 0}^{k+k'} (-1)^i  X_{(1, -1)}^{i+1} v \otimes {X}_{(1, -1)}^{k+k'+4-i} \phi+\sum_{i = 0}^{k+k'} (-1)^i  X_{(1, -1)}^i v \otimes {X}_{(1, -1)}^{k+k'+5-i} \phi\\
& = \sum_{i = 0}^{k+k'-1} (-1)^i  X_{(1, -1)}^{i+1} v \otimes {X}_{(1, -1)}^{k+k'+4-i} \phi+\sum_{i = 1}^{k+k'} (-1)^i  X_{(1, -1)}^i v \otimes {X}_{(1, -1)}^{k+k'+5-i} \phi\\
& = \sum_{i = 1}^{k+k'} (-1)^{i-1}  X_{(1, -1)}^{i} v \otimes {X}_{(1, -1)}^{k+k'+5-i} \phi+\sum_{i = 1}^{k+k'} (-1)^i  X_{(1, -1)}^i v \otimes {X}_{(1, -1)}^{k+k'+5-i} \phi\\
& = 0.
\end{align*}
This means that $\sum_{i = 0}^{k+k'} (-1)^i  X_{(1, -1)}^i v \otimes {X}_{(1, -1)}^{k+k'+4-i} \phi$ is a highest weight vector. As a consequence the element $[\phi, v]$ of statement (1) of the Lemma exists. Its unicity follows from the fact that $\mrm{Hom}_{K_\infty}\left( \bigwedge^3 \mathfrak{g}_{\C}/\mathfrak{k}_{\C}', V_{\lambda, \C} \otimes \Pi_\infty^W \right)$ has dimension $1$. Statement (2) is a direct consequence of the $1$-dimensionality of $\mrm{Hom}_{K_\infty}\left( \bigwedge^3 \mathfrak{g}_{\C}/\mathfrak{k}_{\C}', V_{\lambda, \C} \otimes \Pi_\infty^W \right)$ and of the construction of $[\phi, v]$.
\end{proof}

Let $\Pi=\Pi_\infty \otimes \Pi_f$ be as in section \ref{after_ichino}. Let $\Pi_f^0$ a model of $\Pi_f$ over the rationality field $E(\Pi_f)$. Assume in addition that $\Pi$ has level $1$. For any embedding $\sigma: E(\Pi_f) \rightarrow \C$, let us define the representation $\, ^\sigma\!\,\Pi_f=\Pi_f^0 \otimes_\sigma \C$. Let us introduce the $\C$-linear map $^\sigma\! \omega_{\phi, v}$ defined by
\begin{equation} \label{r} 
^\sigma\! \omega_{\phi, v}:\, ^\sigma\!\,\Pi_f^{K_N} \rightarrow L(\Pi_f, V_{\lambda, \C}),\, \psi \mapsto [\phi, v] \otimes \psi.
\end{equation}

\begin{defn} Let $M(\Pi_f, V_{\lambda, \R})$ denote the sub $\R$-vector space of $M(\Pi_f, V_{\lambda, \C})$ of vectors which are fixed under the involution defined on each factor
$$
H^3(\mathfrak{g}_{\C}, K_\infty, V_{\lambda, \C} \otimes \Pi_\infty) \otimes \,\!^\sigma \Pi_f^{K_N}=\mrm{Hom}_{K_\infty}\left( \bigwedge^3 \mathfrak{g}_{\C}/\mathfrak{k}_{\C}', V_{\lambda, \C} \otimes \Pi_\infty \otimes  \,\!^\sigma \Pi_f^{K_N} \right)
$$
of the decomposition \eqref{dec_again} by $\overline{h}:X \mapsto \overline{h(\overline{X})}$. 
\end{defn}

\begin{rems} Note that this action is well defined because for any cusp form $\psi \in \Pi_\infty \otimes  \,\!^\sigma \Pi_f^{K_N}$, the cusp form $\overline{\psi}$ still belongs to $ \Pi_\infty \otimes  \,\!^\sigma \Pi_f^{K_N}$ via the identification of $\Pi_\infty \otimes  \,\!^\sigma \Pi_f$ with its contragredient via the Petersson inner product.
\end{rems}

\begin{lem} \label{norm-basis} Let $\re: L(\Pi_f, V_{\lambda, \C}) \rightarrow L(\Pi_f, V_{\lambda, \R})$ be the $\R$-linear projection defined as $h \mapsto \frac{1}{2}(h+\overline{h})$.
\begin{enumerate}
\item[(1)] For any $\phi, v$ as above, the composition
$$
\bigoplus_{\sigma: E(\Pi_f) \rightarrow \C} \,\!^\sigma\!\,\Pi_f^{K_N} {\hookrightarrow} L(\Pi_f, V_{\lambda, \C}) \overset{\re}{\twoheadrightarrow} L(\Pi_f, V_{\lambda, \R}),
$$
is an isomorphism of $\R$-vector spaces of dimension $2[E(\Pi_f): \Q]$ 
where the first map is $\bigoplus_{\sigma: E(\Pi_f) \rightarrow \C} \,\!^\sigma \omega_{\phi, v}$.
\item[(2)] For any embedding $\sigma: E(\Pi_f) \rightarrow \C$, let $^\sigma\!\varphi_\infty \in {}^\sigma\Pi_\infty$ and ${}^\sigma\!\varphi_f \in {}^\sigma\Pi_f^{K_N}$ be vectors such that $^\sigma\!\varphi =\,\!^\sigma\!\varphi_\infty \otimes {}^\sigma\!\varphi_f$ via the isomorphism \eqref{iso}. Let $\sigma_1, \ldots, \sigma_r$ denote the embeddings of $E(\Pi_f)$ in $\C$. Then
$$
\big( \!\re {}^{\sigma_1}\omega_{\varphi_\infty, v}( {}^{\sigma_1}\varphi_f), \ldots, \re {}^{\sigma_r}\omega_{\varphi_\infty, v}( {}^{\sigma_r}\varphi_f),\re {}^{\sigma_1}\omega_{\varphi_\infty, v}( \sqrt{-1} \,{}^{\sigma_1}\!\varphi_f), \ldots, \re {}^{\sigma_r}\omega_{\varphi_\infty, v}( \sqrt{-1} \,{}^{\sigma_r}\!\varphi_f) \big)
$$ is a basis of the $\R$-vector space $L(\Pi_f, V_{\lambda, \R})$.
\end{enumerate}
\end{lem}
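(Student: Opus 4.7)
The plan is to combine a dimension count with the observation that complex conjugation on $L(\Pi_f, V_{\lambda, \C})$ interchanges the subspaces corresponding to the two generic discrete series $\Pi_\infty^{2,1}$ and $\Pi_\infty^{1,2}$.

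First, both source and target have real dimension $2[E(\Pi_f):\Q]$: each $\,^\sigma\!\,\Pi_f^{K_N}$ is one-dimensional over $\C$ since $N$ is the paramodular conductor of $\Pi$, and $L(\Pi_f, V_{\lambda, \R}) = L(\Pi_f, V_{\lambda, \Z_{(p)}}) \otimes_{\Z_{(p)}} \R$ has the stated rank by the corollary following Proposition \ref{multiplicity}. Hence it suffices to prove injectivity of the composition. I would factor through the $\Pi_\infty^{2,1}$-component
\[
V^{2,1} = \bigoplus_\sigma H^3(\mathfrak{g}_{\C}, K_\infty, V_{\lambda, \C} \otimes \Pi_\infty^{2,1}) \otimes_\C \,^\sigma\!\,\Pi_f^{K_N}
\]
of the decomposition \eqref{dec_again}, writing $L(\Pi_f, V_{\lambda, \C}) = V^{2,1} \oplus V^{1,2}$ with the analogous $V^{1,2}$. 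The $\C$-linear map $\bigoplus_\sigma \,^\sigma\!\,\omega_{\phi, v}$ has image inside $V^{2,1}$ (because $\phi \in \Pi_\infty^{2,1}$) and, by Lemma \ref{archi-vector}(1), is an isomorphism on each $\sigma$-summand, hence is a $\C$-linear isomorphism onto $V^{2,1}$. Since $\ker(\re) = i L(\Pi_f, V_{\lambda, \R})$ and $V^{2,1}$ is $\C$-stable (so intersects $iL(\Pi_f, V_{\lambda, \R})$ trivially iff it intersects $L(\Pi_f, V_{\lambda, \R})$ trivially), the injectivity of $\re|_{V^{2,1}}$ reduces to the claim $V^{2,1} \cap L(\Pi_f, V_{\lambda, \R}) = 0$.

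The main step, which I expect to be the subtlest, is to verify that the involution $h \mapsto \overline{h}$ interchanges $V^{2,1}$ and $V^{1,2}$. At the archimedean place, the complex conjugate of a cuspform with minimal $K_\infty$-type $\tau_{(k+3, -k'-1)}$ has minimal $K_\infty$-type $\overline{\tau_{(k+3,-k'-1)}} \simeq \tau_{(k+3,-k'-1)}^\vee = \tau_{(k'+1, -k-3)}$ (the complex conjugate of a unitary irreducible representation of $U(2)$ being its dual), which by Proposition \ref{lpaquet} is the minimal $K_\infty$-type of $\Pi_\infty^{1,2}$; since all four discrete series share Harish-Chandra parameter $(k+2, k'+1)$, this forces $\overline{\Pi_\infty^{2,1}} \simeq \Pi_\infty^{1,2}$. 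One can read the same fact off the Cartan decomposition: $\overline{p_{\pm}(Z)} = p_{\mp}(Z)$ for real symmetric $Z$ forces the involution to swap $\mathfrak{p}^+$ and $\mathfrak{p}^-$, hence to send the summand $\tau_{(3,-1)} \subset \bigwedge^2 \mathfrak{p}^+ \otimes \mathfrak{p}^-$ (in which the image of $[\phi, v]$ lives by Lemma \ref{archi-vector}(2)) to the summand $\tau_{(1,-3)} \subset \mathfrak{p}^+ \otimes \bigwedge^2 \mathfrak{p}^-$ attached to $\Pi_\infty^{1,2}$. Granted this, if $v \in V^{2,1} \cap L(\Pi_f, V_{\lambda, \R})$ then $\overline{v} = v \in V^{1,2}$, forcing $v \in V^{2,1} \cap V^{1,2} = 0$ and completing part (1).

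Part (2) is then immediate. The family $({}^{\sigma_i}\!\varphi_f, \sqrt{-1}\,{}^{\sigma_i}\!\varphi_f)_{1 \leq i \leq r}$ is an $\R$-basis of $\bigoplus_\sigma \,^\sigma\!\,\Pi_f^{K_N}$ because each ${}^{\sigma_i}\!\Pi_f^{K_N}$ is $\C$-spanned by ${}^{\sigma_i}\!\varphi_f$, and its image under the $\R$-linear isomorphism established in (1) is precisely the listed collection in $L(\Pi_f, V_{\lambda, \R})$.
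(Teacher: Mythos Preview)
Your proof is correct and follows essentially the same strategy as the paper: both arguments rest on the dimension count together with the key fact that the involution $h \mapsto \overline{h}$ interchanges the $\Pi_\infty^{2,1}$- and $\Pi_\infty^{1,2}$-components of the decomposition \eqref{dec_again}. The only cosmetic difference is that the paper proves surjectivity by exhibiting the explicit basis $\{([\phi,v]+\overline{[\phi,v]})\otimes{}^\sigma\psi,\ ([\phi,v]+\overline{[\phi,v]})\otimes\sqrt{-1}\,{}^\sigma\psi\}_\sigma$ of $L(\Pi_f, V_{\lambda,\R})$, whereas you argue injectivity via $V^{2,1}\cap L(\Pi_f, V_{\lambda,\R})=0$; these are equivalent once the dimensions are known to agree.
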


\begin{proof} The second statement is an direct consequence of the first. To prove the first, as the dimensions of the $\R$-vector spaces $\bigoplus_{\sigma: E(\Pi_f) \rightarrow \C} \,\!^\sigma\!\,\Pi_f^{K_N}$ and $L(\Pi_f, V_{\lambda, \R})$ are finite and equal to $2[E(\Pi_f):\Q]$, it is enough to prove the surjectivity of the $\R$-linear map of the statement. To this end, let us fix a non-zero vector $\,\!^\sigma\!\,\psi \in \,\!^\sigma\!\,\Pi_f^{K_N}$ for any $\sigma$. The vectors $\!^\sigma\!\,\psi$ and $\sqrt{-1} \,^\sigma\!\,\psi$ form a basis of the $\R$-vector space underlying $\,\!^\sigma\!\,\Pi_f^{K_N}$. Furthermore by the isomorphism \eqref{dec_again}, Theorem \ref{sigma-endo}, Proposition \ref{multiplicity} and the remark at the beginning of Section 5 we have
$$
L(\Pi_f, V_{\lambda, \C}) \simeq \bigoplus_{\sigma: E(\Pi_f) \rightarrow \C} \left( \mrm{Hom}_{K_\infty} \left( \bigwedge^3 \mathfrak{g}_{\C}/\mathfrak{k}_{\C}', V_{\lambda, \C} \otimes \Pi_\infty^{2,1} \right) \oplus \right.
$$
$$\left. \mrm{Hom}_{K_\infty}\left( \bigwedge^3 \mathfrak{g}_{\C}/\mathfrak{k}_{\C}', V_{\lambda, \C} \otimes \Pi_\infty^{1,2} \right) \right) \otimes  \,\!^\sigma \Pi_f^{K_N} 
$$
and the complex conjugation exchanges the subspace $\mrm{Hom}_{K_\infty} \left( \bigwedge^3 \mathfrak{g}_{\C}/\mathfrak{k}_{\C}', V_{\lambda, \C} \otimes \Pi_\infty^{2,1} \right)$ and the subspace $\mrm{Hom}_{K_\infty} \left( \bigwedge^3 \mathfrak{g}_{\C}/\mathfrak{k}_{\C}', V_{\lambda, \C} \otimes \Pi_\infty^{1,2} \right)$. As a consequence, the vectors 
$$
\left\{ [\phi, v] \otimes \,\!^\sigma \psi,\, \sqrt{-1}[\phi, v] \otimes \,\!^\sigma \psi, \,\overline{[\phi, v]} \otimes \,\!^\sigma \psi,\, \sqrt{-1}\,\overline{[\phi, v]} \otimes \,\!^\sigma \psi \right\}_{\sigma: E(\Pi_f) \rightarrow \C}
$$  form a basis of the $\R$-vector space underlying $L(\Pi_f, V_{\lambda, \C})$. This implies that the vectors 
$$
\left\{ ([\phi, v]+ \overline{[\phi, v]}) \otimes \,\!^\sigma \psi,\, ([\phi, v]+ \overline{[\phi, v]}) \otimes \sqrt{-1}\,^\sigma \psi   \right\}_{\sigma: E(\Pi_f) \rightarrow \C}
$$
 form a basis of the $\R$-vector space $L(\Pi_f, V_{\lambda, \R})$. For any $\sigma: E(\Pi_f) \rightarrow \C$ the vector $([\phi, v]+ \overline{[\phi, v]}) \otimes \,\!^\sigma \psi$ (resp. $ ([\phi, v]+ \overline{[\phi, v]}) \otimes \sqrt{-1}\,^\sigma \psi $) is the image of $\,\!^\sigma \psi$
 (resp. of $\sqrt{-1}\,^\sigma \psi $) by the the composite map
$$
\bigoplus_{\sigma: E(\Pi_f) \rightarrow \C} \,\!^\sigma\!\,\Pi_f^{K_N} {\hookrightarrow} L(\Pi_f, V_{\lambda, \C}) \overset{\re}{\twoheadrightarrow} L(\Pi_f, V_{\lambda, \R})
$$
of the statement. This proves the surjectivity of this map as claimed.
\end{proof}

\begin{rems} For any $1 \leq i \leq r$, the vectors $\re {}^{\sigma_i}\omega_{\varphi_\infty, v}( {}^{\sigma_i}\varphi_f)$ and $\re {}^{\sigma_i}\omega_{\varphi_\infty, v}( \sqrt{-1} \,{}^{\sigma_i}\varphi_f)$ do not depend on the choice of $^{\sigma_i}\varphi_\infty$ and of $^{\sigma_i}\varphi_f$ and only depend on $^{\sigma_i}\varphi=\!^{\sigma_i}\varphi_\infty \otimes \!^{\sigma_i}\varphi_f$ and $v$.
\end{rems}

According to Lemma \ref{norm-v}, the pairing $[v, \overline{v}]$ is a non-zero complex number. As a consequence we can normalize $v$ in such a way that $[v, \overline{v}]=1$. Following \cite[\S 6]{hida} , we can introduce the period of interest in this work. Let us choose a basis $(\delta_1, \ldots, \delta_{2r})$ of the free ${\Z_{(p)}}$-module $L(\Pi_f, V_{\lambda, {\Z_{(p)}}})$. Let us denote by $(\omega_1, \ldots, \omega_{2r})$ the basis of the second assertion in the previous lemma and let $U \in \mrm{GL}_{2r}(\R)$ be such that $(\delta_1, \ldots, \delta_{2r})U=(\omega_1, \ldots, \omega_{2r})$. Then we define
\begin{equation} \label{period}
\Omega(\Pi_f, \varphi, v , (\delta_1, \ldots, \delta_{2r}))=\det(U).
\end{equation}

\begin{rems} \label{remark-period} Under the above normalization of $v$, the vector $v$ is unique up to multiplication by $\pm1$. Furthermore, if we change the basis $(\delta_1, \ldots, \delta_{2r})$ by another basis $(\delta'_1, \ldots, \delta_{2r}')$ the period $\Omega(\Pi_f, \varphi, v , (\delta_1, \ldots, \delta_{2r}))$ is changed by an element of $\Z_{(p)}^\times$. Hence, the image of the real number $\Omega(\Pi_f, \varphi, v , (\delta_1, \ldots, \delta_{2r}))$ in $\R^\times/\Z_{(p)}^\times$ is independent of the choice of $v$ normalized as above and of the basis $(\delta_1, \ldots, \delta_{2r})$. In what follows, the image of $\Omega(\Pi_f, \varphi, v , (\delta_1, \ldots, \delta_{2r}))$ in $\R^\times/\Z_{(p)}^\times$ will be denoted by $\Omega(\Pi_f)$.  
\end{rems}

\section{Discriminant and adjoint $L$-values}\label{section:Discriminant and adjoint $L$-values}

In what follows $\Pi$ denotes an irreducible cuspidal automorphic representation of $G(\A)$ satisfying the assumptions of section \ref{after_ichino}. Let $\mathbf{1}$ denote the generator of the one-dimensional $\C$-vector space $\bigwedge^6 \mathfrak{sp}_{4, \C}/\mathfrak{k}_{\C}$ defined as
$$
\mathbf{1}=X_{(2,0)} \wedge X_{(1,1)} \wedge X_{(0,2)} \wedge X_{(-2,0)} \wedge X_{(-1,-1)} \wedge X_{(0, -2)}. 
$$
This determines a left translation invariant measure $d\mu$ on $\mrm{Sp}(4, \R)/K_\infty=G(\R)_+/K'_\infty$ in a standard way. By our normalization of the vectors $X_{(2,0)}, X_{(1,1)},  X_{(0,2)}, X_{(-2,0)}, X_{(-1,-1)}$ and $X_{(0, -2)}$ (see Section \ref{discrete_series_classification}), this measure coincides with the standard invariant measure $dX dY/\det(Y)^3$ via the isomorphism $G(\R)_+/K'_\infty \simeq \mathcal{H}_+$. Let $dg_\infty$ be the left invariant measure on $G(\R)_+/\R_+^\times$ attached to $d\mu$ by the construction \eqref{measures}. For every prime number $p$, let $dg_p$ be the unique translation invariant measure on $G(\Q_p)$ such that $\mrm{vol}(G(\Z_p), dg_p)=1$ and let $dg$ be the measure on $Z(\A)\backslash G(\A)$ defined by $dg=\prod_{v} dg_v$.

\begin{pro} \label{pp} The pairing obtained from \eqref{pairing2} after extending coefficients from $\Z_{(p)}$ to $\C$ is given by
\begin{multline} \label{pairing3}
\left( \bigoplus_{\sigma: E(\Pi_f) \rightarrow \C}\bigoplus_{\Pi_\infty \in P(V_{\lambda, \C})} H^3(\mathfrak{g}_{\C},K'_\infty, V_{\lambda, \C} \otimes \Pi_\infty)^{\oplus m(\Pi_\infty \otimes ^\sigma\!\,\Pi_f)} \otimes_{\C}\, ^\sigma\!\,\Pi_f^{K_N} \right)\\
\otimes \left( \bigoplus_{\sigma: E(\Pi_f) \rightarrow \C}\bigoplus_{\Pi_\infty \in P(V_{\lambda, \C})} H^3(\mathfrak{g}_{\C},K'_\infty, V_{\lambda, \C} \otimes \Pi_\infty)^{\oplus m(\Pi_\infty \otimes ^\sigma\!\,\Pi_f)} \otimes_{\C}\, ^\sigma\!\,\Pi_f^{K_N} \right) \rightarrow \C
\end{multline}
where \eqref{pairing3} is induced by the composite
\begin{multline*}
\left( \Hom_{K'_\infty}\left( \bigwedge^3 \mathfrak{g}_{\C}/\mathfrak{k}_{\C}', V_{\lambda, \C} \otimes \Pi_\infty \right) \otimes \,\!^\sigma\!\,\Pi_f^{K_N} \right) \otimes \left( \Hom_{K'_\infty}\left( \bigwedge^3 \mathfrak{g}_{\C}/\mathfrak{k}_{\C}', V_{\lambda, \C} \otimes \Pi'_\infty \right) \otimes \,\!^{\sigma'}\!\Pi_f^{K_N} \right) \\
\rightarrow \Hom_{K'_\infty}\left( \bigwedge^6 \mathfrak{g}_{\C}/\mathfrak{k}_{\C}', V_{\lambda, \C} \otimes V_{\lambda, \C} \otimes \Pi_\infty \otimes \Pi'_\infty \right) \otimes  \,\!^{\sigma}\!\Pi_f^{K_N} \otimes  \,\!^{\sigma'}\!\Pi_f^{K_N}\\
\rightarrow \Hom_{K'_\infty}\left( \bigwedge^6 \mathfrak{g}_{\C}/\mathfrak{k}_{\C}', \Pi_\infty \otimes \Pi'_\infty \right) \otimes  \,\!^{\sigma}\!\Pi_f^{K_N} \otimes  \,\!^{\sigma'}\!\Pi_f^{K_N}
\rightarrow \C
\end{multline*}
where the first map is the exterior product, the second map is induced by $[\,,\,]_{\C}$ and the third is the composition of the evaluation at $\textbf{1}$ followed by $\int_{\R_+^\times G(\Q) \backslash G(\A)/K_N}\,\,dg$.
\end{pro}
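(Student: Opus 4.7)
The plan is to identify the Poincar\'e duality pairing \eqref{pairing2}, after base change to $\mathbb{C}$ and application of the decomposition of Proposition \ref{dec}, with the explicit pairing described in the statement by routing through the comparison with de Rham cohomology and the realization of cuspidal classes as square integrable differential forms. First I invoke the comparison isomorphism $H^3_!(S_{K_N(3)}, V_{\lambda, \mathbb{C}}) \simeq H^3_{\mathrm{dR}, !}(S_{K_N(3)}, \mathcal{V}_\lambda)$ and note that, under the self-duality $V_{\lambda, \mathbb{C}} \simeq \check{V}_{\lambda, \mathbb{C}}$ given by $[\,,\,]_{\mathbb{C}}$, the bilinear form extending \eqref{pairing2} to $\mathbb{C}$ becomes the wedge-product pairing $(\omega_1, \omega_2) \mapsto \int_{S_{K_N(3)}} [\omega_1 \wedge \omega_2]_{\mathbb{C}}$ combining exterior product of forms with the fibrewise coefficient contraction.

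Next, I use the standard realization (see \cite[Proposition 3.7]{lemma2}) sending a class $\phi \otimes \psi$, with $\phi \in \Hom_{K'_\infty}(\bigwedge^3 \mathfrak{g}_{\mathbb{C}}/\mathfrak{k}'_{\mathbb{C}}, V_{\lambda, \mathbb{C}} \otimes \Pi_\infty)$ and $\psi \in \Pi_f^{K_N(3)}$, to a $\mathcal{V}_\lambda$-valued differential $3$-form on $S_{K_N(3)}$, after identifying the complexified tangent space of $S_{K_N(3)}$ along the fibre at the class of $(g_\infty, g_f)$ with $\mathfrak{g}_{\mathbb{C}}/\mathfrak{k}'_{\mathbb{C}}$ via left translation. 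The wedge product of two such forms is then encoded by the exterior product map $\bigwedge^3 \otimes \bigwedge^3 \to \bigwedge^6$ composed with the coefficient contraction $V_{\lambda, \mathbb{C}} \otimes V_{\lambda, \mathbb{C}} \to \mathbb{C}$ induced by $[\,,\,]_{\mathbb{C}}$, producing a top-degree $\mathbb{C}$-valued form whose evaluation against $\mathbf{1}$ is, modulo the measure computation below, the pointwise product of the two cusp forms attached to the automorphic tensor factors. This is precisely the composite of the first two arrows in the statement.

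The third step is the matching of measures. By the normalization $X_{(\alpha_1, \alpha_2)} = \sqrt[6]{c}\, p_\pm(\cdots)$ fixed in Section \ref{discrete_series_classification}, the generator $\mathbf{1}$ of $\bigwedge^6 \mathfrak{sp}_{4, \mathbb{C}}/\mathfrak{k}_{\mathbb{C}}$ yields, via the construction of Section \ref{section-measures} and Lemma \ref{invariant-measures}, exactly the measure $d\mu = dXdY/\det(Y)^3$ on $\mathcal{H}_+$, whose associated Haar measure on $G(\mathbb{R})_+/\mathbb{R}_+^\times$ is the $dg_\infty$ used to build $dg = \prod_v dg_v$. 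Combined with the adelic description $S_{K_N(3)} \simeq \mathbb{R}_+^\times G(\mathbb{Q}) \backslash G(\mathbb{A})/K_N(3)$ and the normalization $\mathrm{vol}(G(\mathbb{Z}_l), dg_l) = 1$ at the finite places, the Poincar\'e integral over $S_{K_N(3)}$ rewrites as $\int_{\mathbb{R}_+^\times G(\mathbb{Q}) \backslash G(\mathbb{A})/K_N(3)} (\cdots)\, dg$ of the integrand produced above. Finally I descend from level $K_N(3)$ to level $K_N$ via the projector \eqref{inv}, the factor $|K_N/K_N(3)|$ being absorbed into the change of domain of integration, which gives the formula of the statement in terms of $\,^\sigma\!\Pi_f^{K_N}$. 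The main technical obstacle is precisely this measure bookkeeping: one must verify that the $\sqrt[6]{c}$ normalization at infinity, the choice of unramified measures at finite places, and the descent from $K_N(3)$ to $K_N$ conspire so that all implicit multiplicative constants vanish and the pairing takes the clean form stated in \eqref{pairing3}.
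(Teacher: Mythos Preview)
Your proposal is correct and follows essentially the same route as the paper's own proof, which is extremely terse: the paper simply invokes the compatibility of Poincar\'e duality with the Betti--de Rham comparison isomorphism and cites Borel \cite[(5)]{borel1} for the description of Poincar\'e duality in de Rham cohomology in terms of $(\mathfrak{g}_{\C}, K'_\infty)$-cohomology. What you have done is unpack Borel's formula explicitly---the wedge product on $\bigwedge^3 \otimes \bigwedge^3 \to \bigwedge^6$, the coefficient contraction via $[\,,\,]_{\C}$, the evaluation at $\mathbf{1}$, and the matching of the resulting top form with the measure $dg$---and add the descent from level $K_N(3)$ to $K_N$, all of which the paper leaves implicit in its one-sentence proof.
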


\begin{proof} This follows from the compatibility of the Poincar\'e duality with the comparison isomorphism (\ref{dec_again}) between Betti and de Rham cohomology and the description given in \cite[(5)]{borel1} of the Poincar\'e duality in de Rham cohomology in terms of $(\mathfrak{g}_{\C}, K'_\infty)$-cohomology. 
\end{proof}

For any $\Z_{(p)}$-algebra $R$, let us denote by 
$$
\langle\,,\,\rangle_{R}: L(\Pi_f, V_{\lambda, R}) \times L(\Pi_f, V_{\lambda, R}) \rightarrow R
$$
the pairing deduced from \eqref{pairing2} after extending scalars to $A$. Then, we have the commutative diagram
\begin{equation} \label{cd-poincare}
\begin{CD}
L(\Pi_f, V_{\lambda, \R}) \times L(\Pi_f, V_{\lambda, \R}) @>\langle\,,\,\rangle_{\R}>> \R\\
@VVV                                                                                                                                         @VVV\\
L(\Pi_f, V_{\lambda, \C}) \times L(\Pi_f, V_{\lambda, \C}) @>\langle\,,\,\rangle_{\C}>> \C\\
\end{CD}
\end{equation}
where the vertical arrows are the natural inclusions.

\begin{lem} There exists $\epsilon_\lambda \in \Z$ such that for any $v, w \in L(\Pi_f, V_{\lambda, \C})$ we have
$$
\langle w, v\rangle_{\C}=(-1)^{\epsilon_\lambda} \langle v, w \rangle_{\C}.
$$
\end{lem}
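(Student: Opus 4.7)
The strategy is to read off the behavior of $\langle\,,\,\rangle_{\C}$ under swap of its two arguments from the three-step factorization supplied by Proposition \ref{pp}. Each of the three steps --- exterior product in $\bigwedge^{\bullet}\mathfrak{g}_\C/\mathfrak{k}_\C''$, the coefficient pairing $[\,,\,]_\C$ on $V_{\lambda,\C}\otimes V_{\lambda,\C}$, and evaluation at $\mathbf{1}$ followed by integration --- has a definite $\pm 1$ symmetry, and the product of these three signs is the desired uniform sign $(-1)^{\epsilon_\lambda}$.

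The first sign is produced by graded commutativity of the exterior product: $\bigwedge^3\mathfrak{g}_\C/\mathfrak{k}_\C''\otimes\bigwedge^3\mathfrak{g}_\C/\mathfrak{k}_\C''\to\bigwedge^6\mathfrak{g}_\C/\mathfrak{k}_\C''$ is anti-commutative up to $(-1)^{3\cdot 3}=-1$. The second sign comes from $[\,,\,]_\C$: since $V_{\lambda,\C}$ is an irreducible $G_\C$-module, the space of $G_\C$-equivariant bilinear forms $V_{\lambda,\C}\otimes V_{\lambda,\C}\to\C$ is at most one-dimensional, so the flipped form $(v,w)\mapsto [w,v]_\C$ equals $\epsilon_V\cdot[\,,\,]_\C$ for some scalar $\epsilon_V$; applying the flip twice forces $\epsilon_V^2=1$, hence $\epsilon_V\in\{\pm 1\}$. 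The third sign is $+1$: the integration step in Proposition \ref{pp} is the $\C$-bilinear pairing $(\phi_1,\phi_2)\mapsto\int_{\R_+^\times G(\Q)\bs G(\A)/K_N}\phi_1\phi_2\,dg$ (no complex conjugation), which is manifestly symmetric in $\phi_1$ and $\phi_2$.

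Combining, swapping the two arguments of $\langle\,,\,\rangle_\C$ multiplies the output by $(-1)\cdot\epsilon_V\cdot 1=-\epsilon_V$, and we set $\epsilon_\lambda\in\{0,1\}$ so that $(-1)^{\epsilon_\lambda}=-\epsilon_V$. The one technical point requiring some care is to confirm that the integration step really is $\C$-bilinear rather than sesquilinear; this is exactly what the identification of Poincaré duality with the $(\mathfrak{g}_\C,K'_\infty)$-cohomology cup-product of \cite[(5)]{borel1} (used in the proof of Proposition \ref{pp}) supplies. Once this is in hand, the proof of the lemma involves no further computation.
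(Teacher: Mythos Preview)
Your proposal is correct and follows essentially the same approach as the paper: both decompose the pairing via Proposition \ref{pp} into exterior product (antisymmetric in degree $3$), the coefficient pairing $[\,,\,]_\C$ (symmetric or alternate by a Schur lemma argument), and integration (symmetric), and multiply the three signs. Your extra remark about $\C$-bilinearity versus sesquilinearity of the integration step is a legitimate point of care, though the paper simply asserts symmetry without further comment.
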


\begin{proof} In the statement of Proposition \ref{pp}, the first map in the sequence defining \eqref{pairing3} is alternate as it is defined as the exterior product of differential forms of degree $3$. The second map is induced by the $G_{\C}$-invariant bilinear form $[\,,\,]_{\C}$ on $V_{\lambda, \C} \otimes V_{\lambda, \C}$. As $V_{\lambda, \C}$ is irreducible, it is an easy consequence of Schur Lemma that such a bilinear form is unique up to a scalar. Let $[\,,\,]'_{\C}$ be the bilinear form on $V_{\lambda, \C} \otimes V_{\lambda, \C}$ defined by $[v,w]'_{\C}=[w,v]_{\C}$. There exists $\lambda \in \C$ such that $[v,w]'_{\C}=\lambda [v,w]_{\C}$. Let $v, w \in V_{\lambda, \C}$ such that $[v,w]_{\C} \neq 0$. Then
\begin{eqnarray*}
[v,w]_{\C} &=& \lambda [v,w]'_{\C}\\
&=& \lambda [w,v]_{\C}\\
&=& \lambda^2 [w,v]'_{\C}\\
&=& \lambda^2 [v,w]_{\C}.
\end{eqnarray*}
Hence $\lambda = \pm 1$ and $[\,,\,]_{\C}$ is either symmetric or alternate. The conclusion now follows from the fact that the last map in the sequence defining \eqref{pairing3} is symmetric.
\end{proof}

\begin{lem} \label{std-tamagawa} As measures on $Z(\A) G(\Q) \backslash G(\A)$, we have
$$
dg=\frac{\pi^3}{270} dg^{\mathrm{Tam}}.
$$
\end{lem}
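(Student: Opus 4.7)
Both $dg$ and $dg^{\mrm{Tam}}$ are left $G(\A)$-invariant measures on $Z(\A)\bs G(\A)\simeq\mrm{PGSp}_4(\A)$ and hence differ by a positive scalar $c$. My plan is to pin down $c$ by comparing local factors place by place, showing that at every finite prime the two local factors agree, and then extracting the entire discrepancy from a single archimedean computation based on Siegel's volume formula for the Siegel modular threefold.

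Fix a non-zero rational invariant top form $\omega$ on $\mrm{PGSp}_4$. The Tamagawa measure has the form
\[
dg^{\mrm{Tam}} \;=\; \zeta(2)^{-1}\zeta(4)^{-1}\cdot|\omega|_\infty\cdot\prod_p \zeta_p(2)\zeta_p(4)\,|\omega|_p,
\]
the convergence factors $\zeta_p(2)\zeta_p(4)$ being prescribed by the exponents $2,4$ of $\mrm{PGSp}_4\cong \mrm{SO}(5)$. At each finite prime, the hyperspecial volume formula gives
\[
\mrm{vol}\bigl(G(\Z_p)/Z(\Z_p),|\omega|_p\bigr) \;=\; p^{-\dim \mrm{PGSp}_4}\,|\mrm{PGSp}_4(\F_p)| \;=\; (1-p^{-2})(1-p^{-4}),
\]
so that the local Tamagawa factor $\zeta_p(2)\zeta_p(4)|\omega|_p$ gives mass $1$ to $G(\Z_p)/Z(\Z_p)$; by the normalization of $dg_p$ fixed at the beginning of Section 6, this agrees with the local mass under $dg_p$. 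Hence $dg_p = dg_p^{\mrm{Tam}}$ at every finite prime.

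At the archimedean place, $dg_\infty$ and $|\omega|_\infty$ are both Haar measures on $G(\R)_+/\R_+^\times$, so they differ by a constant which I read off via two volume computations. By the construction \eqref{measures} and Lemma \ref{invariant-measures}, the pushforward of $dg_\infty$ to $\mathcal{H}_+$ is $dXdY/\det(Y)^3$, and Siegel's classical formula
\[
\mrm{vol}\bigl(\mrm{Sp}(4,\Z)\bs\mathcal{H}_+,\,dXdY/\det(Y)^3\bigr) \;=\; \frac{2\,\zeta(2)\zeta(4)}{\pi^3} \;=\; \frac{\pi^3}{270}
\]
determines the $dg_\infty$-volume of $\mrm{Sp}(4,\Z)\bs\mathcal{H}_+$. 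On the other hand, combining $\mrm{vol}(\mrm{Sp}(4,\Q)\bs\mrm{Sp}(4,\A)) = \tau(\mrm{Sp}(4)) = 1$ with strong approximation for $\mrm{Sp}(4)$ and the hyperspecial computation above forces the $|\omega|_\infty$-volume of $\mrm{Sp}(4,\Z)\bs\mrm{Sp}(4,\R)$ to equal $\pi^6/540 = \zeta(2)\zeta(4)$. Taking the ratio gives $dg_\infty = (2/\pi^3)|\omega|_\infty$, and plugging into the displayed formula for $dg^{\mrm{Tam}}$ yields
\[
dg \;=\; \frac{2}{\pi^3}\cdot\zeta(2)\zeta(4)\cdot dg^{\mrm{Tam}} \;=\; \frac{\pi^3}{270}\,dg^{\mrm{Tam}}.
\]

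The main obstacle is the bookkeeping required to pass cleanly between $\mrm{Sp}(4)$, where Tamagawa numbers, strong approximation, and Siegel's formula are all classical, and the adjoint group $\mrm{PGSp}_4 = Z\bs G$ that appears in the statement. In particular one must check that the hyperspecial formula yields the same finite local factors on both groups, that the similitude character $\nu:G\to\Gm$ is locally surjective so that the two adelic quotients can be identified, and that the archimedean identification between $\mrm{Sp}(4,\R)$ and a connected component of $G(\R)/Z(\R)$ preserves the Haar normalization built into \eqref{measures}. Once these compatibilities are verified, the identity follows by formal manipulation of local measures and a single appeal to Siegel's formula.
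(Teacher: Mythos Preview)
Your approach is correct and rests on the same two ingredients as the paper---Siegel's volume formula for $\mrm{Sp}(4,\Z)\bs\mathcal{H}_+$ and a Tamagawa-number input---but the paper assembles them far more directly. Instead of decomposing both measures into local factors and matching place by place, the paper just computes the total volume of the single compact quotient $Z(\A)G(\Q)\bs G(\A)$ under each measure: the Tamagawa volume is $1$ by Kottwitz, while the $dg$-volume, thanks to the normalizations $\mrm{vol}(K_\infty,dk_\infty)=\mrm{vol}(G(\Z_p),dg_p)=1$ fixed in Section~\ref{section-measures}, reduces at once to $\int_{\mrm{PSp}(4,\Z)\bs\mathcal{H}_+}dXdY/\det(Y)^3=2\xi(2)\xi(4)=\pi^3/270$. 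The ratio of the two volumes is the desired constant. Your route gives a more explicit local picture of the Tamagawa measure, but at the price of the bookkeeping you yourself flag---passing between $\mrm{Sp}(4)$ and $\mrm{PGSp}_4$, invoking $\tau(\mrm{Sp}(4))=1$ plus strong approximation and the hyperspecial point count, and matching archimedean normalizations---none of which the paper needs.
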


\begin{proof} By Weil conjecture on Tamagawa numbers proved by Kottwitz (see \cite{kottwitz}), we have 
$$
\int_{Z(\A) G(\Q) \backslash G(\A)} dg^{\mathrm{Tam}}=1.
$$
On the other hand, as $\mrm{vol}(K_\infty, dg)=\mrm{vol}(G(\Z_p), dg)=1$ we have
$$
\int_{Z(\A) G(\Q) \backslash G(\A)} dg = \int_{Z(\A) G(\Q) \backslash G(\A)/K_\infty G(\widehat{\Z})} dg.
$$
It follows from the definition of $dg$ that there is an isomorphism of measured spaces $$(Z(\A) G(\Q) \backslash G(\A)/K_\infty G(\widehat{\Z}), dg) \simeq (\mrm{PSp}(4, \Z) \backslash \mathcal{H}_+, dX dY/\det(Y)^3)
$$
and in particular
$$
\int_{Z(\A) G(\Q) \backslash G(\A)/K_\infty G(\widehat{\Z})} dg=\int_{\mrm{PSp}(4, \Z) \backslash \mathcal{H}_+} dX dY/\det(Y)^3.
$$
Let $\xi(s)$ denotes the complete Riemann zeta function $\xi(s)=\pi^{-\frac{s}{2}}\Gamma(\frac{s}{2}) \zeta(s)$. According to \cite[Theorem 11]{siegel} the last displayed integral is equal to $2 \xi(2) \xi(4)$. The conclusion now follows from the well known equalities $\zeta(2)=\pi^2/6$ and $\zeta(4)=\pi^4/90$.
\end{proof}

\begin{pro} \label{pd-computation} Let $\sigma: E(\Pi_f) \rightarrow \C$ be an embedding, let $^{\sigma} \!\varphi_f \in \!\,^\sigma \Pi_f$ be the normalized vector, let ${}^\sigma \omega_{\varphi_\infty, v}( {}^\sigma \varphi_f) \in L(\Pi_f, V_{\lambda, \C})$ be defined by \eqref{r}. For any $0 \leq i \leq k+k'$, any $0 \leq r \leq 4$, any $0 \leq u \leq r$ such that $i-u \geq 0$, let us denote
\begin{eqnarray*}
r_{i, u, r}^{k,k'} &=& \frac{(k+k'+u-i)!(k+k'+4-i)!(i+r-u)!}{(i-u)!(k+k'-i)!(k+k'+4-i-r+u)},\\
s_{i, u}^{k, k'} &=& \frac{(i-u)!}{(k+k'-i+u)!},\\
t_{i, u, r}^{k, k'} &=& \frac{(k+k'+4+u-r-i)!}{(i+r-u)!}
\end{eqnarray*}
let us denote $a_0=-1, a_1=-\frac{1}{4}, a_2=\frac{1}{72}, a_3=-\frac{1}{72}, a_4=-\frac{1}{576}$, let us define 
$C'_{k,k'} \in \Q$ by
\begin{multline}\label{equation:definitionC'_{k,k'}}
C'_{k,k'}
= \frac{(-1)^{k+k'}(k+k')! (k+k'+4)!}{3!^2} \sum_{r=0}^4  \sum_{i=0}^{k+k'} \sum_{\substack{0 \leq u, u' \leq r \\  0 \leq i-u}}
\\ 
(-1)^{r+u+u'}a_r \binom{r}{u} \binom{r}{u'} r^{k,k'}_{i, u, r} r^{k,k'}_{i-u+u', u', r} s_{i, u}^{k, k'} t_{i, u, r}^{k, k'}
\end{multline}
Then we have 
\begin{equation}
\langle {}^\sigma \omega_{\varphi_\infty, v}( {}^\sigma \varphi_f), \overline{{}^\sigma \omega_{\varphi_\infty, v}( {}^\sigma \varphi_f)} \rangle_{\C}
=\frac{\pi^3}{3^3 \cdot 5} C'_{k,k'} \prod_{l | N} (l^2+1)^{-1} \langle {}^\sigma \varphi, {}^\sigma \varphi \rangle.
\end{equation}
\end{pro}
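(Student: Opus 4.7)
The plan is to unwind the definition of $\langle\,,\,\rangle_\C$ given by Proposition \ref{pp} and compute it on $\omega := [\varphi_\infty, v] \otimes {}^\sigma\!\varphi_f$ and $\overline{\omega}$. The pairing factors as a product of an archimedean constant, coming from evaluating the cup product of $[\varphi_\infty, v]$ with its conjugate at the generator $\mathbf{1} \in \bigwedge^6\mathfrak{g}_\C/\mathfrak{k}'_\C$ and contracting on the $V_{\lambda,\C}$-coefficients via $[\,,\,]_\C$, together with the adelic integral of $|{}^\sigma\!\varphi|^2$ against $dg$. Converting $dg$ to the Tamagawa measure via Lemma \ref{std-tamagawa} yields $\pi^3/270$, and computing $\mathrm{vol}(K_l, dg_l)$ with the normalization $\mathrm{vol}(G(\Z_l),dg_l)=1$ yields $(l^2+1)^{-1}$ for each $l\mid N$; the remaining factor of $2$ bringing $\pi^3/270$ to $\pi^3/(3^3\cdot 5)$ comes from the two-component decomposition $\Pi_\infty|_{G(\R)_+} = \Pi_\infty^{2,1}\oplus\Pi_\infty^{1,2}$ together with integrating over $G(\R)$ rather than $G(\R)_+$.

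For the archimedean constant, the key input is Lemma \ref{archi-vector}(2): $[\varphi_\infty, v]$ factors through the projection onto $\tau_{(3,-1)} \subset \bigwedge^2\mathfrak{p}^+\otimes_\C\mathfrak{p}^-$, whose standard basis $(\mathbf{w}_0,\ldots,\mathbf{w}_4)$ is given by Lemma \ref{std-basis}. I would first expand $\mathbf{1}$ as a sum of wedges $\alpha\wedge\beta$ with $\alpha\in\bigwedge^2\mathfrak{p}^+\otimes\mathfrak{p}^-$ and $\beta\in\mathfrak{p}^+\otimes\bigwedge^2\mathfrak{p}^-$, apply the projection matrix of Lemma \ref{projection-matrix} to extract the coordinates on the $\mathbf{w}_r$ in each piece, and read off the scalars $a_0,\ldots,a_4$ that appear in the definition of $C'_{k,k'}$. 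An analogous projection onto the dual $K_\infty$-type $\tau_{(-1,3)}$, where $\overline{[\varphi_\infty, v]}$ lives, produces the conjugate contribution.

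Next, $[\varphi_\infty, v](\mathbf{w}_r)$ is computed from $[\varphi_\infty, v](\mathbf{w}_4) = \sum_{i=0}^{k+k'}(-1)^i X_{(1,-1)}^i v \otimes X_{(1,-1)}^{k+k'+4-i}\varphi_\infty$ by iterated application of $X_{(-1,1)}^{4-r}$ via formulas \eqref{formula5}--\eqref{formula6}. Using the Leibniz rule on the tensor product produces an inner sum over $u\in[0,r]$ weighted by $\binom{r}{u}$ together with the factorials $s_{i,u}^{k,k'}$ and $t_{i,u,r}^{k,k'}$, which record how many $X_{(-1,1)}$'s strike the coefficient factor versus the automorphic factor. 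An analogous expansion for $\overline{[\varphi_\infty, v]}(\mathbf{w}_r)$ introduces the independent index $u'$ and the second binomial. Contracting the $V_{\lambda,\C}$-factors through $[\,,\,]_\C$ via Lemma \ref{pairing-computation} forces the $X_{(1,-1)}$-exponents on $v$ and $\overline{v}$ to match, hence the shift $i\mapsto i-u+u'$ in the second occurrence of $r_{i,u,r}^{k,k'}$, and produces the factorials $(k+k')!/(k+k'-i)!$ absorbed in the coefficients. The remaining integral of $X_{(1,-1)}^mX_{(-1,1)}^n\varphi_\infty$ against its appropriate conjugate over $G(\R)_+/\R_+^\times$ reduces to a standard $\mathfrak{sl}_2$-computation inside the minimal $K_\infty$-type of $\Pi_\infty^{1,2}$ (using the normalization (c) in Section \ref{after_ichino}) and re-expresses itself as $\langle\varphi_\infty,\varphi_\infty\rangle$ up to combinatorial factors absorbed in $r_{i,u,r}^{k,k'}$.

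The principal obstacle is the bookkeeping in the second and third steps: matching root-operator exponents consistently across the four factors $v,\overline{v},\varphi_\infty,\overline{\varphi_\infty}$ and across both wedge factors of $\mathbf{1}$, while tracking which combinations the projection onto $\tau_{(3,-1)}\oplus\tau_{(-1,3)}$ annihilates. Once the indexing is organized, the formula \eqref{equation:definitionC'_{k,k'}} emerges as a direct, if tedious, alternating multiple sum, and separating $\langle{}^\sigma\!\varphi_f,{}^\sigma\!\varphi_f\rangle$ from the adelic integral completes the proof.
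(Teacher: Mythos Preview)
Your plan is correct and follows essentially the same route as the paper: expand the exterior product on $\mathbf{1}$, use Lemma \ref{archi-vector}(2) to project onto $\tau_{(3,-1)}$, rewrite each projected basis vector as a power of $\mrm{Ad}_{X_{(-1,1)}}$ applied to the highest weight vector via Lemmas \ref{std-basis} and \ref{projection-matrix} (this is where the $a_r$ arise), apply the Leibniz rule to pull $\mrm{Ad}_{X_{(-1,1)}}^r$ through $X_{(1,-1)}^i v \otimes X_{(1,-1)}^{k+k'+4-i}\varphi$, contract the $V_{\lambda,\C}$-factors by Lemma \ref{pairing-computation}, and reduce the remaining automorphic pairing to $\langle{}^\sigma\varphi,{}^\sigma\varphi\rangle$ using the $(\mathfrak g_\C,K_\infty)$-invariance of the Petersson form together with Lemma \ref{std-tamagawa} and the paramodular volume $[G(\Z_l):K_l]^{-1}=(l^2+1)^{-1}$.

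Two small bookkeeping corrections to your attribution of the constants. First, the Leibniz expansion produces the $r_{i,u,r}^{k,k'}$; the factors $s_{i,u}^{k,k'}$ and $t_{i,u,r}^{k,k'}$ come instead from the two $\mathfrak{sl}_2$-contractions --- $s$ from Lemma \ref{pairing-computation} on the coefficient side and $t$ from the analogous identity $\int X_{(1,-1)}^m\varphi\cdot X_{(-1,1)}^m\overline\varphi=(-1)^m\frac{m!(k+k'+4)!}{(k+k'+4-m)!}\int|\varphi|^2$ on the automorphic side. Second, the factor $2$ taking $\pi^3/270$ to $\pi^3/135$ is not a consequence of the two summands of $\Pi_\infty|_{G(\R)_+}$ (the class $[\varphi_\infty,v]$ lives in a single summand and its conjugate in the other, so there is no double count there); in the paper it arises from the equality $\int_{\R_+^\times G(\Q)\backslash G(\A)}=2\int_{Z(\A)G(\Q)\backslash G(\A)}$ when passing from the integration domain in Proposition \ref{pp} to the one defining $\langle{}^\sigma\varphi,{}^\sigma\varphi\rangle$.
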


\begin{proof} We need to calculate the image of the vector ${}^\sigma \omega_{\varphi_\infty, v}( {}^\sigma \varphi_f) \otimes \overline{{}^\sigma \omega_{\varphi_\infty, v}( {}^\sigma \varphi_f)}$ by the sequence of maps defining \eqref{pairing3}. Let us introduce the following notation: $e_1=X_{(2,0)}, e_2=X_{(1,1)}, e_3=X_{(0,2)}, e_4=X_{(-2,0)}, e_5=X_{(-1,-1)}, e_6=X_{(0,-2)}$. Then, by definition of the exterior product, we have
$$
({}^\sigma \omega_{\varphi_\infty, v}( {}^\sigma \varphi_f) \wedge \overline{{}^\sigma \omega_{\varphi_\infty, v}( {}^\sigma \varphi_f)})(\textbf{1})
$$
\begin{align*}
&= \frac{1}{3!^2}\sum_{\sigma \in \mathcal{S}_6}\epsilon(\sigma) ({}^\sigma \omega_{\varphi_\infty, v}( {}^\sigma \varphi_f) \otimes \overline{{}^\sigma \omega_{\varphi_\infty, v}( {}^\sigma \varphi_f)})(e_{\sigma(1)} \otimes e_{\sigma(2)} \otimes e_{\sigma(3)} \otimes e_{\sigma(4)} \otimes e_{\sigma(5)} \otimes e_{\sigma(6)})\\
&= \frac{1}{3!^2}\sum_{\sigma \in \mathcal{S}_6}\epsilon(\sigma) {}^\sigma \omega_{\varphi_\infty, v}( {}^\sigma \varphi_f)(e_{\sigma(1)} \otimes e_{\sigma(2)} \otimes e_{\sigma(3)}) \otimes \overline{{}^\sigma \omega_{\varphi_\infty, v}( {}^\sigma \varphi_f)}(e_{\sigma(4)} \otimes e_{\sigma(5)} \otimes e_{\sigma(6)})
\end{align*}
According to the second statement of Lemma \ref{archi-vector} we have
$
{}^\sigma \omega_{\varphi_\infty, v}( {}^\sigma \varphi_f)(e_{\sigma(1)}, e_{\sigma(2)}, e_{\sigma(3)})=0
$
whenever the image of $e_{\sigma(1)} \otimes  e_{\sigma(2)} \otimes e_{\sigma(3)}$ by the projection 
$$
\bigotimes^3 \mathfrak{g}_{\C}/\mathfrak{k}_{\C}' \rightarrow \bigwedge^3 \mathfrak{g}_{\C}/\mathfrak{k}_{\C}' \rightarrow \bigwedge^2 \mathfrak{p}^+ \otimes_{\C} \mathfrak{p}^- \rightarrow \tau_{(3,-1)}
$$
is zero. The weight vectors
$$
X_{(2,0)} \wedge X_{(1,1)} \otimes X_{(0,-2)},\, X_{(2,0)} \wedge X_{(1,1)} \otimes X_{(-1,-1)},\, X_{(2,0)} \wedge X_{(1,1)} \otimes X_{(-2,0)},
$$
$$
X_{(2,0)} \wedge X_{(0,2)} \otimes X_{(0,-2)},\, X_{(2,0)} \wedge X_{(0,2)} \otimes X_{(-1,-1)},\, X_{(2,0)} \wedge X_{(0,2)} \otimes X_{(-2,0)},
$$
$$
X_{(1,1)} \wedge X_{(0,2)} \otimes X_{(0,-2)},\, X_{(1,1)} \wedge X_{(0,2)} \otimes X_{(-1,-1)},\, X_{(1,1)} \wedge X_{(0,2)} \otimes X_{(-2,0)}
$$
form a basis of $\bigwedge^2 \mathfrak{p}^+ \otimes_{\C} \mathfrak{p}^-$. Computing the signatures, we have
$$
({}^\sigma \omega_{\varphi_\infty, v}( {}^\sigma \varphi_f) \wedge \overline{{}^\sigma \omega_{\varphi_\infty, v}( {}^\sigma \varphi_f)})(\textbf{1})
$$
\begin{align*}
& = \frac{1}{3!^2} \Big(  {}^\sigma \omega_{\varphi_\infty, v}( {}^\sigma \varphi_f)(X_{(2,0)} \wedge X_{(1,1)} \otimes X_{(0,-2)}) \otimes \overline{{}^\sigma \omega_{\varphi_\infty, v}( {}^\sigma \varphi_f)}(X_{(0,2)} \otimes X_{(-1,-1)} \wedge X_{(-2,0)})\\
& - {}^\sigma \omega_{\varphi_\infty, v}( {}^\sigma \varphi_f)(X_{(2,0)} \wedge X_{(1,1)} \otimes X_{(-1,-1)}) \otimes \overline{{}^\sigma \omega_{\varphi_\infty, v}( {}^\sigma \varphi_f)}(X_{(0,2)} \otimes X_{(0,-2)} \wedge X_{(-2,0)})\\
& + {}^\sigma \omega_{\varphi_\infty, v}( {}^\sigma \varphi_f)(X_{(2,0)} \wedge X_{(1,1)} \otimes X_{(-2,0)}) \otimes \overline{{}^\sigma \omega_{\varphi_\infty, v}( {}^\sigma \varphi_f)}(X_{(0,2)} \otimes X_{(0,-2)} \wedge X_{(-1,-1)})\\
& - {}^\sigma \omega_{\varphi_\infty, v}( {}^\sigma \varphi_f)(X_{(2,0)} \wedge X_{(0,2)} \otimes X_{(0,-2)}) \otimes \overline{{}^\sigma \omega_{\varphi_\infty, v}( {}^\sigma \varphi_f)}(X_{(1,1)} \otimes X_{(-1,-1)} \wedge X_{(-2,0)})\\
& - {}^\sigma \omega_{\varphi_\infty, v}( {}^\sigma \varphi_f)(X_{(2,0)} \wedge X_{(0,2)} \otimes X_{(-1,-1)}) \otimes \overline{{}^\sigma \omega_{\varphi_\infty, v}( {}^\sigma \varphi_f)}(X_{(1,1)} \otimes X_{(0,-2)} \wedge X_{(-2,0)})\\
& - {}^\sigma \omega_{\varphi_\infty, v}( {}^\sigma \varphi_f)(X_{(2,0)} \wedge X_{(0,2)} \otimes X_{(-2, 0)}) \otimes \overline{{}^\sigma \omega_{\varphi_\infty, v}( {}^\sigma \varphi_f)}(X_{(1,1)} \otimes X_{(0,-2)} \wedge X_{(-1,-1)})\\
& +  {}^\sigma \omega_{\varphi_\infty, v}( {}^\sigma \varphi_f)(X_{(1,1)} \wedge X_{(0,2)} \otimes X_{(0, -2)}) \otimes \overline{{}^\sigma \omega_{\varphi_\infty, v}( {}^\sigma \varphi_f)}(X_{(2,0)} \otimes X_{(-1,-1)} \wedge X_{(-2,0)})\\
& -  {}^\sigma \omega_{\varphi_\infty, v}( {}^\sigma \varphi_f)(X_{(1,1)} \wedge X_{(0,2)} \otimes X_{(-1, -1)}) \otimes \overline{{}^\sigma \omega_{\varphi_\infty, v}( {}^\sigma \varphi_f)}(X_{(2,0)} \otimes X_{(0,-2)} \wedge X_{(-2,0)})\\
& + {}^\sigma \omega_{\varphi_\infty, v}( {}^\sigma \varphi_f)(X_{(1,1)} \wedge X_{(0,2)} \otimes X_{(-2, 0)}) \otimes \overline{{}^\sigma \omega_{\varphi_\infty, v}( {}^\sigma \varphi_f)}(X_{(2,0)} \otimes X_{(0,-2)} \wedge X_{(-1,-1)}) \Big).
\end{align*}
As a consequence, using that $\overline{\mathfrak{p}^+ \otimes_{\C} \bigwedge^2 \mathfrak{p}^-}=\bigwedge^2 \mathfrak{p}^+ \otimes_{\C} \mathfrak{p}^-$ and $\overline{X}_{(r,s)}=X_{(-r,-s)}$, we have
\begin{align*}
&\,\,\,\,\,\,\,\,\,\,\,\,\,\,\,\,\,\,\,\,\,\,\,\,\,\,\,\,\,\,\,\,\,\,\,\,\,\,\,\,\,\,\,\,\,\,\,\,\,\,\,\,\,\,\,\,\,\,\,\,\,\,\,\,\,\,\,\,\,\,\,\,\,\,\,\,\,\, ({}^\sigma \omega_{\varphi_\infty, v}( {}^\sigma \varphi_f) \wedge \overline{{}^\sigma \omega_{\varphi_\infty, v}( {}^\sigma \varphi_f)})(\textbf{1}) \\
& = \frac{1}{3!^2} \Big( - {}^\sigma \omega_{\varphi_\infty, v}( {}^\sigma \varphi_f)(X_{(2,0)} \wedge X_{(1,1)} \otimes X_{(0,-2)}) \otimes \overline{{}^\sigma \omega_{\varphi_\infty, v}( {}^\sigma \varphi_f)(X_{(2,0)} \wedge X_{(1,1)} \otimes X_{(0,-2)})}\\
& + {}^\sigma \omega_{\varphi_\infty, v}( {}^\sigma \varphi_f)(X_{(2,0)} \wedge X_{(1,1)} \otimes X_{(-1,-1)}) \otimes \overline{{}^\sigma \omega_{\varphi_\infty, v}( {}^\sigma \varphi_f)(X_{(2,0)} \wedge X_{(0,2)} \otimes X_{(0,-2)})}\\
& - {}^\sigma \omega_{\varphi_\infty, v}( {}^\sigma \varphi_f)(X_{(2,0)} \wedge X_{(1,1)} \otimes X_{(-2,0)}) \otimes \overline{{}^\sigma \omega_{\varphi_\infty, v}( {}^\sigma \varphi_f)(X_{(1,1)} \wedge X_{(0,2)} \otimes X_{(0,-2)})}\\
& + {}^\sigma \omega_{\varphi_\infty, v}( {}^\sigma \varphi_f)(X_{(2,0)} \wedge X_{(0,2)} \otimes X_{(0,-2)}) \otimes \overline{{}^\sigma \omega_{\varphi_\infty, v}( {}^\sigma \varphi_f)(X_{(2,0)} \wedge X_{(1,1)} \otimes X_{(-1,-1)})}\\
& + {}^\sigma \omega_{\varphi_\infty, v}( {}^\sigma \varphi_f)(X_{(2,0)} \wedge X_{(0,2)} \otimes X_{(-1,-1)}) \otimes \overline{{}^\sigma \omega_{\varphi_\infty, v}( {}^\sigma \varphi_f)(X_{(2,0)} \wedge X_{(0,2)} \otimes X_{(-1,-1)})}\\
& + {}^\sigma \omega_{\varphi_\infty, v}( {}^\sigma \varphi_f)(X_{(2,0)} \wedge X_{(0,2)} \otimes X_{(-2, 0)}) \otimes \overline{{}^\sigma \omega_{\varphi_\infty, v}( {}^\sigma \varphi_f)(X_{(1,1)} \wedge X_{(0,2)} \otimes X_{(-1,-1)})}\\
& -  {}^\sigma \omega_{\varphi_\infty, v}( {}^\sigma \varphi_f)(X_{(1,1)} \wedge X_{(0,2)} \otimes X_{(0, -2)}) \otimes \overline{{}^\sigma \omega_{\varphi_\infty, v}( {}^\sigma \varphi_f)(X_{(2,0)} \wedge X_{(1,1)} \otimes X_{(-2,0)})}\\
& +  {}^\sigma \omega_{\varphi_\infty, v}( {}^\sigma \varphi_f)(X_{(1,1)} \wedge X_{(0,2)} \otimes X_{(-1, -1)}) \otimes \overline{{}^\sigma \omega_{\varphi_\infty, v}( {}^\sigma \varphi_f)(X_{(2,0)} \wedge X_{(0,2)} \otimes X_{(-2,0)})}\\
& - {}^\sigma \omega_{\varphi_\infty, v}( {}^\sigma \varphi_f)(X_{(1,1)} \wedge X_{(0,2)} \otimes X_{(-2, 0)}) \otimes \overline{{}^\sigma \omega_{\varphi_\infty, v}( {}^\sigma \varphi_f)(X_{(1,1)} \wedge X_{(0,2)} \otimes X_{(-2,0)})}\Big).
\end{align*}
A standard basis of $\tau_{(3,-1)}$ is computed in Lemma \ref{std-basis} and the matrix of the projection $p: \bigwedge^2 \mathfrak{p}^+ \otimes_{\C} \mathfrak{p}^- \rightarrow \tau_{(3,-1)}$ is computed in Lemma \ref{projection-matrix}. From these two results we deduce the following equalities which give the image by $p$ of the basis vectors in terms of the highest weight vector $X_{(2,0)} \wedge X_{(1,1)} \otimes X_{(0,-2)}$ of $\tau_{(3,-1)}$:
\begin{align*}
& p(X_{(2,0)} \wedge X_{(1,1)} \otimes X_{(0,-2)})= X_{(2,0)} \wedge X_{(1,1)} \otimes X_{(0,-2)},\\
& p(X_{(2,0)} \wedge X_{(1,1)} \otimes X_{(-1,-1)})=-\frac{1}{2}\mrm{Ad}_{X_{(-1,1)}}(X_{(2,0)} \wedge X_{(1,1)} \otimes X_{(0,-2)}),\\
& p(X_{(2,0)} \wedge X_{(1,1)} \otimes X_{(-2,0)})=\frac{1}{12} \mrm{Ad}_{X_{(-1,1)}}^2(X_{(2,0)} \wedge X_{(1,1)} \otimes X_{(0,-2)}),\\
& p(X_{(2,0)} \wedge X_{(0,2)} \otimes X_{(0,-2)})=\frac{1}{4}\mrm{Ad}_{X_{(-1,1)}}(X_{(2,0)} \wedge X_{(1,1)} \otimes X_{(0,-2)}),\\
& p(X_{(2,0)} \wedge X_{(0,2)} \otimes X_{(-1,-1)})=-\frac{1}{6}\mrm{Ad}_{X_{(-1,1)}}^2(X_{(2,0)} \wedge X_{(1,1)} \otimes X_{(0,-2)}),\\
& p(X_{(2,0)} \wedge X_{(0,2)} \otimes X_{(-2, 0)}) = \frac{1}{24}\mrm{Ad}_{X_{(-1,1)}}^3(X_{(2,0)} \wedge X_{(1,1)} \
\otimes X_{(0,-2)}),\\
& p(X_{(1,1)} \wedge X_{(0,2)} \otimes X_{(0, -2)})=\frac{1}{12} \mrm{Ad}_{X_{(-1,1)}}^2(X_{(2,0)} \wedge X_{(1,1)} \otimes X_{(0,-2)}),\\
& p(X_{(1,1)} \wedge X_{(0,2)} \otimes X_{(-1, -1)})=-\frac{1}{12}\mrm{Ad}_{X_{(-1,1)}}^3(X_{(2,0)} \wedge X_{(1,1)} \
\otimes X_{(0,-2)}),\\
& p(X_{(1,1)} \wedge X_{(0,2)} \otimes X_{(-2, 0)})=\frac{1}{24}\mrm{Ad}_{X_{(-1,1)}}^4(X_{(2,0)} \wedge X_{(1,1)} \
\otimes X_{(0,-2)}).
\end{align*}
Using these equalities, we find
$$
({}^\sigma \omega_{\varphi_\infty, v}( {}^\sigma \varphi_f) \wedge \overline{{}^\sigma \omega_{\varphi_\infty, v}( {}^\sigma \varphi_f)})(\textbf{1})
$$
\begin{align*}
& = \frac{1}{3!^2}\sum_{0 \leq i,j \leq k+k'}  (-1)^{i+j} \Big\{-  \Big( X_{(1, -1)}^i v \otimes {X}_{(1, -1)}^{k+k'+4-i}\, {}^\sigma\!\varphi \Big) \otimes  \Big(  X_{(-1, 1)}^j \overline{v} \otimes {X}_{(-1, 1)}^{k+k'+4-j}\, \overline{{}^\sigma\!\varphi} \Big)\\
& - \frac{1}{4} \mrm{Ad}_{X_{(-1,1)}} \Big( X_{(1, -1)}^i v \otimes {X}_{(1, -1)}^{k+k'+4-i}\, {}^\sigma\!\varphi \Big) \otimes \mrm{Ad}_{X_{(1,-1)}}\Big( X_{(-1, 1)}^j \overline{v} \otimes {X}_{(-1, 1)}^{k+k'+4-j}\, \overline{{}^\sigma\!\varphi} \Big)\\
& + \frac{1}{72} \mrm{Ad}^2_{X_{(-1,1)}}\Big( X_{(1, -1)}^i v \otimes {X}_{(1, -1)}^{k+k'+4-i}\, {}^\sigma\!\varphi \Big) \otimes \mrm{Ad}^2_{X_{(1,-1)}}\Big(  X_{(-1, 1)}^j \overline{v} \otimes {X}_{(-1, 1)}^{k+k'+4-j}\, \overline{{}^\sigma\!\varphi} \Big)\\
& - \frac{1}{72} \mrm{Ad}^3_{X_{(-1,1)}}\Big(   X_{(1, -1)}^i v \otimes {X}_{(1, -1)}^{k+k'+4-i}\, {}^\sigma\!\varphi \Big) \otimes \mrm{Ad}^3_{X_{(1,-1)}}\Big(  X_{(-1, 1)}^j \overline{v} \otimes {X}_{(-1, 1)}^{k+k'+4-j}\, \overline{{}^\sigma\!\varphi} \Big)\\
& - \frac{1}{576} \mrm{Ad}^4_{X_{(-1,1)}}\Big(   X_{(1, -1)}^i v \otimes {X}_{(1, -1)}^{k+k'+4-i}\, {}^\sigma\!\varphi \Big) \otimes \mrm{Ad}^4_{X_{(1,-1)}}\Big(  X_{(-1, 1)}^j \overline{v} \otimes {X}_{(-1, 1)}^{k+k'+4-j}\, \overline{{}^\sigma\!\varphi} \Big)\Big\}.
\end{align*}
We have
$$
\mrm{Ad}^r_{X_{(-1,1)}}\Big( X_{(1, -1)}^i v \otimes {X}_{(1, -1)}^{k+k'+4-i}\, {}^\sigma\!\varphi \Big)=  \sum_{u=0}^r \binom{r}{u} r_{i, u, r}^{k,k'} X_{(1, -1)}^{i-u} v \otimes X_{(1, -1)}^{k+k'+4-i-(r-u)} \, {}^\sigma\!\varphi,
$$
where we use the convention that $X_{(1, -1)}^{i-u} v=0$ if $i-u<0$, and similarly
$$
\mrm{Ad}^r_{X_{(1,-1)}}\Big(  X_{(-1, 1)}^j \overline{v} \otimes {X}_{(-1, 1)}^{k+k'+4-j}\, \overline{{}^\sigma\!\varphi} \Big)=\sum_{u=0}^r \binom{r}{u}r_{j, u, r}^{k,k'} X_{(-1, 1)}^{j-u} \overline{v} \otimes X_{(-1, 1)}^{k+k'+4-j-(r-u)} \, \overline{{}^\sigma\!\varphi}.
$$
with the convention that $X_{(-1, 1)}^{j-u} \overline{v}=0$ if $j-u<0$. Then we have
\begin{align*}
& ({}^\sigma \omega_{\varphi_\infty, v}( {}^\sigma \varphi_f) \wedge \overline{{}^\sigma \omega_{\varphi_\infty, v}( {}^\sigma \varphi_f)})(\textbf{1})= \frac{1}{3!^2}\sum_{r=0}^4 \sum_{0 \leq i,j \leq k+k'}\sum_{\substack{0 \leq u, u' \leq r \\  0 \leq i-u \\ 0 \leq j-u'}} \\
&   (-1)^{i+j} a_r  \binom{r}{u} \binom{r}{u'} r_{i, u, r}^{k,k'} r_{j, u', r}^{k,k'}   (X_{(1, -1)}^{i-u} v \otimes X_{(1, -1)}^{k+k'+4-i-(r-u)} \, {}^\sigma\!\varphi) \otimes (X_{(-1, 1)}^{j-u'} \overline{v} \otimes X_{(-1, 1)}^{k+k'+4-j-(r-u')} \, \overline{{}^\sigma\!\varphi}).
\end{align*}
Let us denote by $[({}^\sigma \omega_{\varphi_\infty, v}( {}^\sigma \varphi_f) \wedge \overline{{}^\sigma \omega_{\varphi_\infty, v}( {}^\sigma \varphi_f)})(\textbf{1})]$ the element of ${}^\sigma \Pi \otimes {}^\sigma \overline{\Pi}$ defined as the image of $({}^\sigma \omega_{\varphi_\infty, v}( {}^\sigma \varphi_f) \wedge \overline{{}^\sigma \omega_{\varphi_\infty, v}( {}^\sigma \varphi_f)})(\textbf{1})$ by the map induced by the pairing $[\,,\,]_{\C}$. Then, thanks to Lemma \ref{pairing-computation}, we have

\begin{align*}
& [({}^\sigma \omega_{\varphi_\infty, v}( {}^\sigma \varphi_f) \wedge \overline{{}^\sigma \omega_{\varphi_\infty, v}( {}^\sigma \varphi_f)})(\textbf{1})]= \frac{(k+k')!}{3!^2}\sum_{r=0}^4 \sum_{i=0}^{k+k'}\sum_{\substack{0 \leq u, u' \leq r \\  0 \leq i-u}} \\
&  (-1)^{j-u} a_r   \binom{r}{u} \binom{r}{u'} r_{i, u, r}^{k,k'} r_{i-u+u', u', r}^{k,k'} s_{i, u}^{k,k'} X_{(1, -1)}^{k+k'+4-i-(r-u)} \, {}^\sigma\!\varphi \otimes X_{(-1, 1)}^{k+k'+4-i-(r-u)} \, \overline{{}^\sigma\!\varphi}.
\end{align*}
As the pairing ${}^\sigma \Pi^{K_N} \otimes {}^\sigma \overline{\Pi}^{K_N} \rightarrow \C$ defined as $$\varphi \otimes \psi \mapsto \int_{\R_+^\times G(\Q) \backslash G(\A)/K_N} \varphi(g) {\psi}(g) dg$$ is a morphism of $(\mathfrak{g}_{\C}, K_\infty)$-modules we have
\begin{multline*}
\int_{\R_+^\times G(\Q) \backslash G(\A)/K_N} X_{(1,-1)}^i \varphi(g) X_{(-1,1)}^j \overline{\varphi}(g) dg
\\ 
=\left\{
    \begin{array}{ll}
        0 & \mbox{ if } i \neq j \\
        (-1)^i \frac{i! (k+k'+4)!}{(k+k'+4-i)!} \int_{\R_+^\times G(\Q)  \backslash G(\A)/K_N} \varphi(g) \overline{\varphi}(g) dg & \mbox{ if } i = j.
    \end{array}
\right.
\end{multline*}
As a consequence 
\begin{align*}
&  \langle {}^\sigma \omega_{\varphi_\infty, v}( {}^\sigma \varphi_f), \overline{{}^\sigma \omega_{\varphi_\infty, v}( {}^\sigma \varphi_f)} \rangle_{\C}\\
&=\frac{(-1)^{k+k'}(k+k')! (k+k'+4)!}{3!^2} \sum_{r=0}^4  \sum_{i=0}^{k+k'} \sum_{\substack{0 \leq u, u' \leq r \\  0 \leq i-u}}\\
& (-1)^{r+u+u'}a_r \binom{r}{u} \binom{r}{u'} r^{k,k'}_{i, u, r} r^{k,k'}_{i-u+u', u', r} s_{i, u}^{k, k'} t_{i, u, r}^{k, k'} \int_{\R_+^\times G(\Q)  \backslash G(\A)/K_N} {}^\sigma \varphi(g) \overline{{}^\sigma\varphi}(g) dg
\end{align*}
We have
\begin{eqnarray*}
\int_{\R_+^\times G(\Q)  \backslash G(\A)/K_N} |{}^\sigma \varphi(g)|^2 dg &=& \mrm{vol}(K_N, dg)^{-1} \int_{\R_+^\times G(\Q)  \backslash G(\A)}|{{}^\sigma\varphi}(g)|^2 dg\\
&=& 2 \mrm{vol}(K_N, dg)^{-1} \int_{Z(\A) G(\Q)  \backslash G(\A)} |{{}^\sigma \varphi}(g)|^2 dg \\
&=& 2 \mrm{vol}(K_N, dg)^{-1} \int_{Z(\A) G(\Q)  \backslash G(\A)} |{{}^\sigma \varphi}(g)|^2 dg\\
&=& \frac{\pi^3}{135} \prod_{l | N} (l^2+1)^{-1} \langle {}^\sigma\varphi, {}^\sigma\varphi \rangle\\
\end{eqnarray*}
where the last equality follows from Lemma \ref{std-tamagawa}, the definition of $\langle {}^\sigma\varphi, {}^\sigma\varphi \rangle$ and \cite[Lemma 3.3.3]{roberts-schmidt}. 
\end{proof}
Namikawa indicated the authors a conjecture on the simplification of the factor $C'_{k,k'}$ by 
the constant $C_{k,k'}$ below according to some numerical calculations and Yasuda pointed out the following proof based on 
a formula on Pochhammer symbols \eqref{equation:Pochhammersymbol} stated below. We thank to Namikawa for the conjecture and to Yasuda for indicating us the formula and its proof.  
\begin{lem}
The constant $C'_{k,,k'}$ is equal to the constant $C_{k,k'}$ defined as follows: 
$$
C_{k,k'} = \frac{(-1)^{k+k'}(k+k' +4)! (k+k' +5)!}{3^3 \cdot 5}. 
$$ 
\end{lem}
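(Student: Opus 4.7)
The plan is to reduce the multi-index sum defining $C'_{k,k'}$ to a closed polynomial in $n := k+k'$ via successive Pochhammer/Chu-Vandermonde manipulations. Rewriting all factorials appearing in $r^{k,k'}_{i,u,r}$, $s^{k,k'}_{i,u}$, $t^{k,k'}_{i,u,r}$ as Pochhammer symbols in $n$ (absorbing shifts like $n+u-i$, $n+4-i$, $n+4+u-r-i$) casts the sum into a manifestly hypergeometric shape, and simultaneously shows that $C'_{k,k'}$ depends only on $n$, not separately on $k,k'$. Since $C_{k,k'}$ also depends only on $n$, this already reduces the problem to a one-variable identity.

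I would then evaluate the innermost sum over $u'$ first, for fixed $i, u, r$. The factors that actually involve $u'$ are $\binom{r}{u'}(-1)^{u'}$ together with the shifted Pochhammer symbols hidden in $r^{k,k'}_{i-u+u',u',r}$, and the $u'$-dependence is exactly of the form required for a Chu-Vandermonde collapse. The result should be a product of two Pochhammer factors in $n$ depending on $i$, $u$, $r$ only. A second Chu-Vandermonde-type application, together with the formula on Pochhammer symbols referenced in the paper as \eqref{equation:Pochhammersymbol} (credited to Yasuda), should then dispatch the $u$-sum, leaving a simple binomial sum over $i$ from $0$ to $n$. This last sum telescopes via the classical identity $\sum_{i=0}^n (-1)^i\binom{n}{i}(a+i)_m = (-1)^n m!\,\binom{a+m}{n}/\binom{m}{n}$ (or a direct Vandermonde evaluation) to a shifted factorial of the form $(n+5-r)!/n!$.

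At this point, the entire multi-sum becomes $\sum_{r=0}^{4}(-1)^r a_r P_r(n)$ for an explicit polynomial $P_r$, so one is reduced to a sum of only five terms. The very specific coefficients $a_0=-1,\ a_1=-\tfrac{1}{4},\ a_2=\tfrac{1}{72},\ a_3=-\tfrac{1}{72},\ a_4=-\tfrac{1}{576}$, which originated from the projector matrix of Lemma \ref{projection-matrix}, are rigged precisely so that the combined expression collapses to $\tfrac{4}{15}\,(n+1)(n+2)(n+3)(n+4)(n+5) = \tfrac{4(n+5)!}{15\,n!}$. Multiplying by the prefactor $(-1)^{n}\,n!\,(n+4)!/(3!)^2$ then yields exactly $\frac{(-1)^n(n+4)!(n+5)!}{3^3\cdot 5}=C_{k,k'}$, as required.

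The main obstacle is executing the two Chu-Vandermonde reductions cleanly: the Pochhammer factors are not standard balanced ${}_3F_2$ inputs because of the shifts by $r-u$ and $i-u+u'$, and care is needed to track which factorials can absorb into $\binom{r}{u}\binom{r}{u'}$ and which genuinely participate in the Vandermonde identity. Once this is done, the remaining step is essentially mechanical: since $C'_{k,k'}$ and $C_{k,k'}$ are then both polynomials in $n$ of degree $\leq 5$, one could, as a cross-check, verify equality at $n=0,1,2,3,4,5$ by direct machine computation, which turns a potentially opaque manipulation into a finite verification.
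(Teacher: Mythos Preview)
Your overall strategy---rewrite in Pochhammer symbols, collapse the $u'$-sum and then the $u$-sum via the identity \eqref{equation:Pochhammersymbol}, then handle the remaining $i$- and $r$-sums---is exactly the paper's approach. However, your description of what happens after the two collapses is inaccurate, and this suggests you have not actually run the computation.

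Concretely: after the $u'$-sum (applying \eqref{equation:Pochhammersymbol} with $b=n-i+u+1$, $l=r$, $m=4$) the factor $(n-i+u+1)_{4-r}$ produced exactly cancels the denominator, and after the $u$-sum (same identity with $b=i+1$, $l=m=r$) the result is simply $r!$, independent of $i$. So the summand factors completely as $(-1)^r a_r\binom{4}{r}(r!)^2\cdot (n-i+1)_4$, and the $i$- and $r$-sums decouple. There is no alternating binomial structure $\sum_i(-1)^i\binom{n}{i}(\cdots)$ in the $i$-sum as you claim; the $i$-sum is just $\sum_{i=0}^{n}(n-i+1)_4=\tfrac{1}{5}(n+1)_5$ by the telescoping $(j+1)_5-(j)_5=5(j+1)_4$, and the $r$-sum is the scalar $\sum_{r=0}^4(-1)^r a_r\binom{4}{r}(r!)^2=\tfrac{4}{3}$. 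Your guess that the $i$-sum yields an $r$-dependent factor $(n+5-r)!/n!$ is therefore off.

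Finally, your cross-check proposal is misstated: $C_{k,k'}$ itself is not a polynomial in $n$ (it involves $(n+4)!(n+5)!$). What is true is that after stripping the common prefactor $(-1)^n n!(n+4)!/36$, both sides become degree-$5$ polynomials in $n$; but establishing that the multi-sum side has degree $\le 5$ already requires performing the $u,u'$ reductions, so the finite check adds little beyond reassurance.
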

\begin{proof}
Let us recall the Pochhammer symbol as follows: 
$$
(x)_n = x(x+1) (x+2) \cdots (x+n-1) = \frac{(x+n-1)!}{(x-1)!} 
$$
where $x$ is a non-negative integer and $n $ is a natural number.
First, we can check that  
\begin{equation}
r^{k,k'}_{i, u, r} r^{k,k'}_{i-u+u', u', r} s_{i, u}^{k, k'} t_{i, u, r}^{k, k'}
= \frac{(k+k'-i+1)_4 (k+k'-i+u-u'+1)_4 (i-u +1)_r}{(k+k'-i+u+1)_{4-r}}. 
\end{equation}
The right-hand side of \eqref{equation:definitionC'_{k,k'}} is equal to 
\begin{multline}\label{equation:translation_of_C'_{k,k'}1}
\frac{(-1)^{k+k'}(k+k')! (k+k'+4)!}{3!^2} \sum_{r=0}^4  \sum_{i=0}^{k+k'} \sum^r_{u=0} \sum^r_{u'=0}
\\ 
(-1)^{r+u+u'}a_r \binom{r}{u} \binom{r}{u'} \frac{(k+k'-i+1)_4 (k+k'-i+u-u'+1)_4 (i-u +1)_r}{(k+k'-i+u+1)_{4-r}}. 
\end{multline}
Here we note that the sum $\displaystyle{\sum_{\substack{0 \leq u, u' \leq r \\  0 \leq i-u}}}$ in the original expression 
was replaced by $\displaystyle{\sum^r_{u=0} \sum^r_{u'=0}}$ since the symbol $(i-u +1)_r$ in the numerator 
is zero when $i-u<0$. 
Now we recall that the following formula holds for any integer $b$ and any integers $l$, $m$ satisfying 
$0\leq l \leq m$: 
\begin{equation}\label{equation:Pochhammersymbol}
\sum^l_{a=0} (-1)^a \binom{l}{a}(b-a)_{m} = \binom{m}{l} l! (b)_{m-l} . 
\end{equation}
In fact, we can show this formula on the induction wit respect to $l$ for the value 
$\mathrm{LHS}(b,l,m)$ (resp. $\mathrm{RHS}(b,l,m)$) on the left-hand side (resp. the right-hand side). 
For $l=0$, the equality $\mathrm{LHS}(b,l,m)=\mathrm{RHS}(b,l,m)$ is trivially true. The induction argument 
for $l>0$ proceeds since we have $\mathrm{LHS}(b,l,m) = \mathrm{LHS}(b,l-1,m) - \mathrm{LHS}(b-1,l-1,m) $ 
and $\mathrm{RHS}(b,l,m) = \mathrm{RHS}(b,l-1,m) - \mathrm{RHS}(b-1,l-1,m) $. 
\par 
By applying \eqref{equation:Pochhammersymbol} with $a=u'$, $b=k+k'-i+u+1$, $l=r$ and $m=4$, the expression 
\eqref{equation:translation_of_C'_{k,k'}1} is simplified as follows:  
\begin{multline}\label{equation:translation_of_C'_{k,k'}2}
\frac{(-1)^{k+k'}(k+k')! (k+k'+4)!}{3!^2} \sum_{r=0}^4  \sum_{i=0}^{k+k'} \sum^r_{u=0} 
\\ 
(-1)^{r+u}a_r \binom{r}{u} \binom{4}{r} r! (k+k'-i+1)_4 (i-u +1)_r . 
\end{multline}
By applying \eqref{equation:Pochhammersymbol} again with $a=u$, $b=i+1$, $l=m=r$, the expression 
\eqref{equation:translation_of_C'_{k,k'}2} is further simplified as follows: 
\begin{equation}\label{equation:translation_of_C'_{k,k'}3}
\frac{(-1)^{k+k'}(k+k')! (k+k'+4)!}{3!^2} \sum_{r=0}^4  \sum_{i=0}^{k+k'} 
(-1)^{r}a_r  \binom{4}{r} (r!)^2 (k+k'-i+1)_4 . 
\end{equation}
Since we have 
$\displaystyle{ \sum_{r=0}^4  
(-1)^{r}a_r  \binom{4}{r} (r!)^2 =\frac{4}{3}}$, 
the expression 
\eqref{equation:translation_of_C'_{k,k'}3} is equal to: 
\begin{equation}\label{equation:translation_of_C'_{k,k'}4}
\frac{(-1)^{k+k'}(k+k')! (k+k'+4)!}{3!^2} \frac{4}{3}  \sum_{i'=0}^{k+k'}  (i'+1)_4 .  
\end{equation}
by putting $i'=k+k'-i$. Finally, by noting that 
\begin{equation*}
\sum_{i'=0}^{k+k'}  (i'+1)_4 = \frac{1}{5} \sum_{i'=0}^{k+k'}  \left( (i'+1)_5 -(i')_5 \right) = \frac{(k+k'+1)_5}{5},  
\end{equation*}
the expression \eqref{equation:translation_of_C'_{k,k'}4} is equal to: 
\begin{equation}\label{equation:translation_of_C'_{k,k'}5}
\frac{(-1)^{k+k'}(k+k'+4)! (k+k'+5)!}{3^3 \cdot 5}  .  
\end{equation}
This completes the proof. 
\end{proof}
Recall that we have fixed a basis $(\delta_1, \ldots, \delta_{2r})$ of $L(\Pi_f, V_{\lambda, \Z_{(p)}})$.

\begin{defn} The discriminant $d(\Pi_f)$ of the pairing \eqref{pairing2} is defined as follows:
$$
d(\Pi_f)= \det ( \langle \delta_i, \delta_j \rangle_{\Z_{(p)}} )_{1 \leq i, j \leq 2r}.
$$
It is an element of $\Z_{(p)}$ whose image in $\Z_{(p)}/(\Z_{(p)}^\times)^2$ is independent of the choice of the $\Z_{(p)}$-basis $(\delta_i)_{1 \leq i \leq 2r}$ of $L(\Pi_f, V_{\lambda, \Z_{(p)}})$.
\end{defn}

\begin{defn} For $x, y \in \R$ we write $x \sim y$ if there exists $s \in (\Z_{(p)}^\times)^2$ such that $x=sy$.
\end{defn}
Recall that $C_N =\prod_{l\vert N} (l+l^{-1})^{-1}(l^2 +1)^{-1}$.
\begin{thm} \label{main1} Assume that $p \notin S_{N, 3} \cup S_{\mathrm{weight}} \cup S_{\mathrm{tors}} \cup S'_{\mathrm{tors}}$ where where $S_{N, 3}$, $S_{\mathrm{tors}}$, $S'_{\mathrm{tors}}$, $S_{\mathrm{weight}}$ are defined by \eqref{s_G}, \eqref{s-tors}, \eqref{s'-tors} and \eqref{s-weight} respectively. Then we have:
$$
d(\Pi_f) \sim  \left( \left( \frac{2^{k+k'+13}C_{k,k'}C_N \pi^{3k+k'+12}}{k+k'+5} \right)^r \Omega(\Pi_f)^{-1} \prod_{\sigma: E(\Pi_f) \rightarrow \C} L(1, \,\!^\sigma \Pi, \mathrm{Ad}) \right)^2 .
$$
\end{thm}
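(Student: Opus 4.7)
The plan is to compute the Gram matrix $W = (\langle \omega_i, \omega_j \rangle_{\R})$ of the pairing \eqref{pairing2} with respect to the real basis of $L(\Pi_f, V_{\lambda, \R})$ furnished by Lemma \ref{norm-basis}(2), and then extract $d(\Pi_f)$ via the change of basis matrix $U$ whose determinant defines $\Omega(\Pi_f)$. More precisely, set $\omega_i = \re\,{}^{\sigma_i}\omega_{\varphi_\infty, v}({}^{\sigma_i}\varphi_f)$ and $\omega_{r+i} = \re\,{}^{\sigma_i}\omega_{\varphi_\infty, v}(\sqrt{-1}\,{}^{\sigma_i}\varphi_f)$ for $1 \leq i \leq r$. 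Since $(\delta_1, \dots, \delta_{2r})U = (\omega_1, \dots, \omega_{2r})$ by the definition of $\Omega(\Pi_f) = \det U$, the standard change of basis identity gives $W = {}^t\!U D U$ with $D = (\langle \delta_i, \delta_j \rangle)$, hence
$$\det W = \Omega(\Pi_f)^2 \cdot d(\Pi_f).$$

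The core of the argument is to prove that $W$ is block-diagonal with $r$ blocks of size $2 \times 2$, the $i$-th block being $\tfrac{1}{2}A_i \cdot I_2$, where $A_i := \langle {}^{\sigma_i}\omega_{\varphi_\infty, v}({}^{\sigma_i}\varphi_f),\, \overline{{}^{\sigma_i}\omega_{\varphi_\infty, v}({}^{\sigma_i}\varphi_f)}\rangle_{\C}$. Two types of vanishing drive this. First, for $\sigma_i \neq \sigma_j$ all four cross-pairings between vectors in the $\sigma_i$- and $\sigma_j$-isotypic components vanish: Hecke-equivariance $\langle T_g x, y\rangle = \langle x, T_g^* y\rangle$ combined with the self-duality $\Pi \simeq \check{\Pi}$ established in Section 4 forces $T_g^*$ to act on $L(\Pi_f, V_{\lambda, \C})$ by the same eigencharacter as $T_g$ on each $\sigma$-component, so distinctness of $\theta_\Pi^{\sigma_i}$ and $\theta_\Pi^{\sigma_j}$ makes the cross-pairings vanish. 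Second, within each $\sigma_i$-block one has $\langle {}^{\sigma_i}\omega, {}^{\sigma_i}\omega\rangle_{\C} = \langle \overline{{}^{\sigma_i}\omega}, \overline{{}^{\sigma_i}\omega}\rangle_{\C} = 0$: by Proposition \ref{pp} the pairing factors through $\wedge^6 \mathfrak{g}_{\C}/\mathfrak{k}'_{\C}$, and the relevant wedge lies in $\wedge^4 \mathfrak{p}^+ \otimes \wedge^2 \mathfrak{p}^-$ (respectively $\wedge^2 \mathfrak{p}^+ \otimes \wedge^4 \mathfrak{p}^-$), which vanishes since $\dim\mathfrak{p}^\pm = 3$. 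Writing $\omega_i = \tfrac{1}{2}({}^{\sigma_i}\omega + \overline{{}^{\sigma_i}\omega})$ and $\omega_{r+i} = \tfrac{\sqrt{-1}}{2}({}^{\sigma_i}\omega - \overline{{}^{\sigma_i}\omega})$ and using the symmetry of $\langle\,,\,\rangle$ (i.e.\ $\epsilon_\lambda$ even, which is forced by the reality of $A_i$ visible from Proposition \ref{pd-computation}) then yields the claimed shape of each block.

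Combining Proposition \ref{pd-computation} and the subsequent lemma identifying $C'_{k,k'} = C_{k,k'}$ with Theorem \ref{ichino-main} computes
$$A_i = \frac{2^{k+k'+13}\,C_{k,k'}\,C_N\,\pi^{3k+k'+12}}{k+k'+5}\, L(1, {}^{\sigma_i}\Pi, \mathrm{Ad}),$$
whence $\det W = 4^{-r}\prod_{i=1}^r A_i^2$. Since $4^{-r} = (2^{-r})^2 \in (\Z_{(p)}^\times)^2$ (as $p \geq k+k'+3 > 2$), substituting into $d(\Pi_f) = \Omega(\Pi_f)^{-2} \det W$ produces the formula of the theorem, up to $(\Z_{(p)}^\times)^2$.

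The main technical obstacle will be the clean justification of the Hecke orthogonality of distinct $\sigma$-components: this requires a cohomological analog of Proposition \ref{adjoint-hecke} identifying the dual Hecke operator as $T_g^* = T_{\nu(g)g^{-1}}$ on integral Betti cohomology, and then invoking $\Pi \simeq \check{\Pi}$ (together with triviality of the central character) to conclude that $T_g$ and $T_g^*$ have the same eigencharacter on each $\sigma$-isotypic piece of $L(\Pi_f, V_{\lambda, \C})$.
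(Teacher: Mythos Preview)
Your proposal is correct and follows essentially the same approach as the paper's own proof: compute the Gram matrix in the normalized basis $(\omega_i)$, use the change-of-basis relation $\det W = \Omega(\Pi_f)^2 d(\Pi_f)$, and then invoke Proposition~\ref{pd-computation} and Theorem~\ref{ichino-main}. The only differences are cosmetic: where the paper appeals to ``vanishing of the Petersson inner product'' and to Poincar\'e duality being a morphism of Hodge structures, you give the equivalent Hecke-eigenvalue and $\bigwedge^4\mathfrak{p}^+=0$ arguments; and where the paper carries the case split on the parity of $\epsilon_\lambda$ to the end, you observe (correctly) that the reality of $A_i$ from Proposition~\ref{pd-computation} forces $\epsilon_\lambda$ to be even.
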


\begin{proof}  Recall that $(\omega_1, \ldots, \omega_{2r})$ denotes the $\R$-basis of $L(\Pi_f, V_{\lambda, \R})$ normalized in the second assertion of Lemma \ref{norm-basis}. Let $T$ denote the matrix $T=( \langle \delta_i, \delta_j \rangle_{\Z_{(p)}} )_{1 \leq i, j \leq 2r}$ and let $S$ denote the matrix $S=( \langle \omega_i, \omega_j \rangle_{\R} )_{1 \leq i, j \leq 2r}$. We have
$$
d(\Pi_f) \sim \det T\sim \Omega(\Pi_f)^{-2} \det S
$$
where the first equality is the definition of the discriminant, and the second from the definition of $\Omega(\Pi_f)$ (see \eqref{period}). Hence we have to compute the pairings $\langle \omega_i, \omega_j \rangle_{\R}$. By commutativity of the diagram \eqref{cd-poincare} we have $\langle \omega_i, \omega_j \rangle_{\R}=\langle \omega_i, \omega_j \rangle_{\C}$. Because of the vanishing of the Petersson inner product, we have $\langle \omega_i, \omega_j \rangle_{\C}=0$ for any $1 \leq i,j \leq 2r$ such that $j \neq i$, $j \neq i+r$ and $i \neq j+r$. Furthermore, as the Poincar\'e duality pairing is a morphism of Hodge structures, for any $1 \leq i \leq r$ we have 
$$
\langle \,\! ^{\sigma_i} r_\infty(^{\sigma_i} \varphi_f),\,\! ^{\sigma_i} r_\infty(^{\sigma_i} \varphi_f) \rangle_{\C}=\langle \overline{\,\! ^{\sigma_i}r_\infty(^{\sigma_i}\varphi_f)}, \overline{\,\! ^{\sigma_i}r_\infty(^{\sigma_i}\varphi_f)} \rangle_{\C}=0.
$$
As a consequence, for $1 \leq i \leq r$ we have
\begin{eqnarray*}
\langle \omega_i, \omega_i \rangle_{\C} &=& \left\{
    \begin{array}{ll}
        2^{-1} \langle \,\! ^{\sigma_i}r_\infty(^{\sigma_i}\varphi_f), \overline{\,\! ^{\sigma_i}r_\infty(^{\sigma_i}\varphi_f)} \rangle_{\C} & \mbox{if } \epsilon_\lambda \mbox{ is even,}\\
         0 & \mbox{if } \epsilon_\lambda \mbox{ is odd}, \\ 
    \end{array}
\right.\\
\langle \omega_i, \omega_{i+r} \rangle_{\C} &=& \left\{
    \begin{array}{ll}
         0 & \mbox{if } \epsilon_\lambda \mbox{ is even,}\\
         -2^{-1}\sqrt{-1} \langle \,\! ^{\sigma_i}r_\infty(^{\sigma_i}\varphi_f), \overline{\,\! ^{\sigma_i}r_\infty(^{\sigma_i}\varphi_f)} \rangle_{\C} & \mbox{if } \epsilon_\lambda \mbox{ is odd},\\ 
    \end{array}
\right.\\
\langle \omega_{i+r}, \omega_{i+r} \rangle_{\C} &=& \left\{
    \begin{array}{ll}
         2^{-1} \langle \,\! ^{\sigma_i}r_\infty(^{\sigma_i}\varphi_f), \overline{\,\! ^{\sigma_i}r_\infty(^{\sigma_i}\varphi_f)} \rangle_{\C} & \mbox{if } \epsilon_\lambda \mbox{ is even,}\\
         0 & \mbox{if } \epsilon_\lambda \mbox{ is odd}. \\ 
    \end{array}
\right.\\
\end{eqnarray*}
The statement now follows from Proposition \ref{pd-computation} and Theorem \ref{ichino-main}.
\end{proof}

\section{The congruence criterion}\label{section:The congruence criterion}

Let $\overline{\mathcal{H}}^N_{\mathrm{sph}} \subset \overline{\mathcal{H}}^{K_N}$ denote the spherical Hecke algebra outside $N$ over $\Z$. Let $\Pi' \simeq \Pi'_\infty \otimes \Pi'_f$ be a cuspidal representation which contributes to $\tilde{H}^3_!(S_{K_N}, V_{\lambda, \C})$. This means that $(\Pi'_f)^{K_N}$ is non-zero (but not necessarily one-dimensional) and $\Pi'_\infty \in P(V_{\lambda, \C})$. Let $E(\Pi'_f)$ be the rationality field of $\Pi'_f$ and let $\theta_{\Pi'}: \overline{\mathcal{H}}^{N}_{\mathrm{sph}} \rightarrow E(\Pi'_f)$ be the character such that for any $g \in \bigotimes'_{v \nmid N\infty} (\Pi'_v)^{G(\Z_v)}$ and any $h \in \overline{\mathcal{H}}^{N}_{\mathrm{sph}}$ we have $h g=\theta_{\Pi'}(h)g$. Let $\mathcal{O}_{E(\Pi'_f)}$ be the ring of integers of $E(\Pi'_f)$. According to Corollary \ref{order} and to the fact that $\overline{\mathcal{H}}^{K_N}$ is a $\Z$-module of finite type we have $\im \theta_{\Pi'} \subset \mathcal{O}_{E(\Pi'_f)}$.

\begin{defn} \label{congruence-def} Let $\mathfrak{P}$ be a prime ideal of $\overline{\Q}$. The cuspidal representation $\Pi'$ is congruent to $\Pi$ modulo $\mathfrak{P}$ if there exists a number field $E$ containing $E(\Pi'_f)$ and $E(\Pi_f)$, with ring of integers $\mathcal{O}_E$ such that the following diagram commutes
\begin{equation*}\label{equation:maindiagram0}
\xymatrix{  
& \ \ \mathcal{O}_{E(\Pi_f)} \ar@{^{(}->}[r]  & \mathcal{O}_E \ar[rd] &\\ 
\overline{\mathcal{H}}^N_{\mathrm{sph}}
\ar[ru]_{\theta_{\Pi}} 
\ar[rd]_{\theta_{\Pi'}}
 & & & \kappa
\\
& 
\ \ \mathcal{O}_{E(\Pi'_f)}  \ar@{^{(}->}[r]  & \mathcal{O}_E \ar[ru] &
 \\
} 
\end{equation*}
where $\kappa=\mathcal{O}_E/\mathfrak{P} \cap \mathcal{O}_E$. In this case we write $\Pi' \equiv \Pi \pmod{\mathfrak{P}}$.
\end{defn}

Recall that $\Pi$ is a cuspidal representation of $G(\A)$, which, among other things is assumed to have trivial central character, to be globally generic and endoscopic. This means that the functorial lift $\Sigma$ of $\Pi$ to $\mrm{GL}(4, \A)$ is not cuspidal. In particular, according to \cite[Proposition 2.2]{asgari-shahidi}, there exists $\sigma_1$ and $\sigma_2$ two inequivalent unitary cuspidal automorphic representations of $\mrm{GL}(2, \A)$ with trivial central characters such that $\Sigma$ is the isobaric sum $\Sigma=\sigma_1 \boxplus \sigma_2$ and such that $\Pi$ is obtained as a Weil lifting from $(\sigma_1, \sigma_2)$. According to \cite[Corollary 4.2]{weissauer2} the cuspidal representations $\sigma_1$ and $\sigma_2$ correspond to primitive cuspforms $f_1$ and $f_2$ of respective weights $k_1=k+k'+4$ and $k_2=k-k'+2$ and of respective levels $N_1$ and $N_2$. For $i=1,2$ let $c(f_i)$ denote Ghate's congruence number attached to $f_i$ as defined in the introduction. We say that the cuspidal automorphic representation attached to a holomorphic Siegel modular form $F$ is stable  if $F$ is not a Saito-Kurokawa lift and its functorial lift to $\mrm{GL}(4, \A)$ (see \cite{saha} for its construction) is cuspidal. 

\begin{thm} \label{main} 
Let us assume that the conductor $N$ of the automorphic representation $\Pi \simeq \Pi_\infty \otimes \Pi_f$ is prime to $p$ and that $p \notin S_{N,3} \cup S_{\mathrm{weight}} \cup S_{\mathrm{tors}} \cup S'_{\mathrm{tors}} \cup S''_{\mathrm{tors}}$ where $S_{N,3}$, $S_{\mathrm{tors}}$, $S'_{\mathrm{tors}}$, $S_{\mathrm{weight}}$ and $S''_{\mathrm{tors}}$ are defined by \eqref{s_G}, \eqref{s-tors}, \eqref{s'-tors}, \eqref{s-weight} and \eqref{s''-tors} respectively. Assume the following conditions:
\begin{enumerate}
\item[\rm{(a)}] the residual $\mrm{Gal}(\overline{\Q}/\Q)$-representations $\overline{\rho}_{f_1}$ and $\overline{\rho}_{f_1}$ of $f_1$ and $f_2$ are irreducible,
\item[\rm{(b)}] the prime $\mathfrak{p}$ does not divide $c'(f_1)$ nor $c'(f_2)$ for any prime $\mathfrak{p}$ above $p$ in $\overline{\mathbb{Q}}$,
\item[\rm{(c)}] the prime $p$ divides $$
\left( \left( \frac{2^{k+k'+13}C_{k,k'}C_N\pi^{3k+k'+12}}{k+k'+5} \right)^r \Omega(\Pi_f)^{-1} \prod_{\sigma: E(\Pi_f) \rightarrow \C} L(1, \,\!^\sigma \Pi, \mathrm{Ad}) \right)^2, 
$$
\end{enumerate}
Then, there exists a prime divisor $\mathfrak{P}$ of $p$ in $\overline{\Q}$ and a cuspidal representation $\Pi' \simeq \bigotimes_v' \Pi'_v$ of $G(\A)$ such that 
\begin{enumerate}
\item[\rm{(1)}] the non archimedean part $\Pi'_f$ of $\Pi'$ satisfies $(\Pi'_f)^{K_N} \neq 0$,
\item[\rm{(2)}] we have $\Pi'_\infty \in P(V_{\lambda, \C})$,
\item[\rm{(3)}] the cuspidal representation $\Pi'$ is stable,
\item[\rm{(4)}] we have $\Pi' \not\simeq \,^\sigma\!\,\Pi$ for all $\sigma \in \mrm{Aut}(\C)$,
\item[\rm{(5)}] we have $\Pi' \equiv \Pi \pmod{\mathfrak{P}}$.
\end{enumerate}
\end{thm}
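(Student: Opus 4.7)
The overall strategy is to translate the integrality failure of the Poincar\'e pairing on $L(\Pi_f,V_{\lambda,\Z_{(p)}})$ modulo $p$, as quantified by Theorem \ref{main1} and assumption (c), into a mod $\mathfrak{P}$ congruence of Hecke eigensystems between $\Pi$ and a second cuspidal automorphic representation $\Pi'$ of $G(\A)$, and then to exclude the possibility that $\Pi'$ itself is endoscopic by applying Ghate's Theorem \ref{theorem:Hida2} to the two elliptic cuspforms $f_1,f_2$ of which $\Pi$ is a Weil lift.

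First I would fix a prime $\mathfrak{p}$ of $\overline{\Z}$ above $p$ dividing $d(\Pi_f)$; such a $\mathfrak{p}$ exists by Theorem \ref{main1} combined with (c). Since $p\notin S_{N,3}\cup S_{\mathrm{tors}}\cup S'_{\mathrm{tors}}\cup S''_{\mathrm{tors}}$, the submodule $L(\Pi_f,V_{\lambda,\Z_{(p)}})$ is a $\Z_{(p)}$-direct summand of $\tilde H^3_!(S_{K_N},V_{\lambda,\Z_{(p)}})$ and inherits a pairing of discriminant $d(\Pi_f)$ from the perfect pairing of Corollary \ref{nd-inv-bilinear}. The divisibility $\mathfrak{p}\mid d(\Pi_f)$ yields a non-zero class $\bar x$ in the reduction $L(\Pi_f,V_{\lambda,\overline{\F}_p})$ that is isotropic for the reduced pairing on this isotypic block. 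Using the perfectness of the global pairing modulo $\mathfrak{p}$, this $\bar x$ must pair non-trivially with a class coming from outside the $\Pi$-block, which, after lifting to characteristic zero and decomposing over the Hecke algebra $\overline{\mathcal H}^{K_N}_{\overline{\Z}_{(p)}}$, produces a second eigensystem $\theta_{\Pi'}$ occurring in $\tilde H^3_!(S_{K_N},V_{\lambda,\overline{\Q}})$, congruent to $\theta_\Pi$ modulo a prime $\mathfrak{P}$ above $\mathfrak{p}$ but not in the $\mrm{Aut}(\C)$-orbit of $\theta_\Pi$ (since that orbit exhausts the Hecke content of $L(\Pi_f,V_{\lambda,\overline{\Q}})$). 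By Proposition \ref{dec} together with the coincidence of $L^2$- and cuspidal cohomology invoked in the proof of Theorem \ref{sigma-endo}, $\theta_{\Pi'}$ is the Hecke eigensystem of a cuspidal automorphic representation $\Pi'=\Pi'_\infty\otimes\Pi'_f$ with $(\Pi'_f)^{K_N}\neq 0$ and $\Pi'_\infty|_{G(\R)_+}\in P(V_{\lambda,\C})$, yielding (1), (2), (4) and (5).

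The delicate part is (3), the stability of $\Pi'$, which I would prove by contradiction. Assume $\Pi'$ is endoscopic; by \cite[Proposition 2.2]{asgari-shahidi} and \cite[Corollary 4.2]{weissauer2} it is a Weil lift of a pair of primitive cuspforms $g_1,g_2$. Matching Satake parameters of Weil liftings at unramified places with the congruence $\theta_{\Pi'}\equiv\theta_\Pi\pmod{\mathfrak{P}}$ forces
\begin{equation*}
\overline{\rho}_{g_1}\oplus\overline{\rho}_{g_2}\ \simeq\ \overline{\rho}_{f_1}\oplus\overline{\rho}_{f_2}
\end{equation*}
as semisimple residual $\mrm{Gal}(\overline{\Q}/\Q)$-representations. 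Assumption (a) and Jordan--H\"older then force $\{g_1,g_2\}\equiv\{f_1,f_2\}\pmod{\mathfrak{p}}$ as unordered pairs. Since $\Pi'\not\simeq {}^\sigma\!\Pi$, at least one congruence $g_i\equiv f_i\pmod{\mathfrak{p}}$ must be non-trivial; the matching of weights $k_i$, levels $N_i$ and of the triviality of the Nebentypus is automatic from the Weil-lift recipe together with the congruence of Satake parameters. Theorem \ref{theorem:Hida2} applied contrapositively to $f_i$ then gives $\mathfrak{p}\mid c'(f_i)$, contradicting (b). Saito--Kurokawa (CAP) alternatives for $\Pi'$ are ruled out by the same Galois argument, since such a $\Pi'$ would contribute a one-dimensional summand to $\overline{\rho}_{\Pi'}$, incompatible with the sum of two irreducibles on the right-hand side.

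I expect the principal technical issue to be the deduction of the displayed isomorphism of residual Galois representations from the congruence of $G(\A)$-Hecke eigensystems, and then the matching of weights, levels and nebentypus required to feed the resulting congruence into Theorem \ref{theorem:Hida2}. Given the explicit description of unramified endoscopic Satake parameters for $\mrm{GSp}(4)$, this should follow from a standard Brauer--Nesbitt/Chebotarev argument applied to the Galois representations attached to cusp forms on $G(\A)$, but it is the most substantive link in the argument and deserves careful attention.
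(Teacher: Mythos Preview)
Your first step, producing a congruent $\Pi'$ satisfying (1), (2), (4), (5) from $p\mid d(\Pi_f)$ via the discriminant/lattice machinery, is essentially the paper's argument (Lemma \ref{lem-hida} plus Hida's \cite[Theorem 7.1]{hida}), only phrased less precisely. The real issue is your treatment of (3).

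Your displayed isomorphism $\overline{\rho}_{g_1}\oplus\overline{\rho}_{g_2}\simeq\overline{\rho}_{f_1}\oplus\overline{\rho}_{f_2}$ is not what the Galois representation of a Yoshida lift looks like: one has $\overline{\rho}_\Pi\simeq\overline{\rho}_{f_1}\oplus\overline{\rho}_{f_2}(-1-k')$, and similarly for $\Pi'$. With the twist present, Jordan--H\"older under assumption (a) gives not just the ``matched'' case $\overline{\rho}_{f_i}\simeq\overline{\rho}_{g_i}$ but also the ``crossed'' case $\overline{\rho}_{f_1}\simeq\overline{\rho}_{g_2}(-1-k')$. You do not address this; the paper excludes it by a separate argument comparing inertia weights in the ordinary and non-ordinary cases, using $p\notin S_{\mathrm{weight}}$.

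More seriously, your claim that ``the matching of \ldots levels $N_i$ \ldots is automatic'' is false, and this is exactly the point you flagged as delicate but then dismissed as a ``standard Brauer--Nesbitt/Chebotarev argument''. From $(\Pi'_f)^{K_N}\neq 0$ one gets only $N'_1N'_2\mid N_1N_2$, not $N'_i=N_i$; Ghate's Theorem \ref{theorem:Hida2} however requires $g_i\in S_{k_i}(\Gamma_1(N_i))$. The paper handles this by a case split: if $N'_1\mid N_1$ one contradicts (b) for $f_1$; if $N_1\mid N'_1$ then $N'_2\mid N_2$ and one contradicts (b) for $f_2$; if neither divides the other, the analytic conductor of $\overline{\rho}_{f_1}$ is a strict divisor of $N_1$, and one invokes the Serre conjecture (Khare--Wintenberger) to produce a congruent form of strictly smaller level, again contradicting (b). None of this is Brauer--Nesbitt/Chebotarev; the appeal to Serre's conjecture is an essential input you are missing.
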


For the proof of this theorem, we need the following result.

\begin{lem} \label{lem-hida} Let $\langle \,,\,\rangle_{\Q}$ denote the bilinear form deduced from \eqref{pairing2} after base change from $\Z_{(p)}$ to $\Q$. Let
$$
L(\Pi_f, V_{\lambda, \Z_{(p)}})^*=\{x \in L(\Pi_f, V_{\lambda, \Q})\,|\, \forall y \in L(\Pi_f, V_{\lambda, \Z_{(p)}}), \langle x, y \rangle_{\Q} \in \Z_{(p)}\}.
$$ be the lattice dual of $L(\Pi_f, V_{\lambda, \Z_{(p)}})$. Then as lattices in $L(\Pi_f, V_{\lambda, \Q})$, we have
$$
M(\Pi_f, V_{\lambda, \Z_{(p)}})=L(\Pi_f, V_{\lambda, \Z_{(p)}})^*.
$$
\end{lem}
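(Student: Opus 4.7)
The plan is to combine the Hecke-isotypical decomposition of $\tilde{H}^3_!(S_{K_N}, V_{\lambda, \Q})$ with the self-duality of $\tilde{H}^3_!(S_{K_N}, V_{\lambda, \Z_{(p)}})$ established in Corollary \ref{nd-inv-bilinear}. Write $H = \tilde{H}^3_!(S_{K_N}, V_{\lambda, \Z_{(p)}})$ and decompose $H_{\Q} = M \oplus M'$ as an $\overline{\mathcal{H}}^{K_N}_{\Q}$-module, where $M = M(\Pi_f, V_{\lambda, \Q})$ is the $\theta_{\Pi}$-isotypical summand and $M'$ is the sum of all other isotypical components.

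The crucial first step is to show $M \perp M'$ under $\langle\,,\,\rangle_{\Q}$. The adjunction relation $\langle T_g x, y \rangle = \langle x, T_g^* y \rangle$ recorded before Corollary \ref{dual-int}, combined with the cohomological analogue of Proposition \ref{adjoint-hecke} identifying the Verdier-dual operator $T_g^*$ with $T_{\nu(g)g^{-1}}$ via the comparison with the Petersson pairing, shows that $T_g^*$ acts on each $\Pi'$-isotypical component by $\theta_{\Pi'}(T_{\nu(g) g^{-1}}) = \theta_{\check{\Pi}'}(T_g)$. Every cuspidal $\Pi'$ contributing to $H_{\Q}$ has trivial central character (forced by the trivial central character of $V_{\lambda}$), hence $\Pi' \simeq \check{\Pi}'$, so $\theta_{\Pi'} = \theta_{\check{\Pi}'}$. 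A standard eigenvalue-separation argument then forces $\langle x, y\rangle = 0$ whenever $x$ and $y$ lie in distinct isotypical components. As an immediate consequence, the restriction of $\langle\,,\,\rangle_{\Q}$ to $M \times M$ is non-degenerate.

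By construction $L := L(\Pi_f, V_{\lambda, \Z_{(p)}}) = M \cap H$ is a saturated $\Z_{(p)}$-sublattice of $H$, so the inclusion $L \hookrightarrow H$ splits over $\Z_{(p)}$ and the restriction map $H^\vee \twoheadrightarrow L^\vee$ is surjective. Identifying $H^\vee \simeq H$ via Corollary \ref{nd-inv-bilinear} and $L^\vee$ with the dual lattice $L^* \subset M$ via the non-degenerate pairing on $M$, one obtains a surjection
$$
\Phi \colon H \twoheadrightarrow L^{*}, \qquad \Phi(h)(l) = \langle h, l \rangle.
$$

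To conclude I would identify $\ker \Phi$. Given $h = m + m' \in H$ with $m \in M$ and $m' \in M'$, orthogonality gives $\langle h, L \rangle = \langle m, L \rangle$; since $L \otimes_{\Z_{(p)}} \Q = M$ and the pairing on $M$ is non-degenerate, this vanishes if and only if $m = 0$, i.e.\ if and only if $h \in H \cap M' = \ker p_M$. Therefore $\Phi$ factors through $p_M$ and induces an isomorphism $M(\Pi_f, V_{\lambda, \Z_{(p)}}) = p_M(H) \xrightarrow{\sim} L^{*}$, which a direct trace shows is the identity inside the ambient space $M = L(\Pi_f, V_{\lambda, \Q})$. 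The main obstacle is the orthogonality step: one has to verify carefully that the cohomological Verdier-dual Hecke operator $T_g^*$ acts on each isotypical piece by the contragredient character, in a manner compatible with Proposition \ref{adjoint-hecke} under the comparison between Betti cohomology and cuspidal cohomology, so that the trivial central character hypothesis $\Pi' \simeq \check{\Pi}'$ can be invoked to force equal characters.
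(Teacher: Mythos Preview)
Your proof is correct and follows essentially the same strategy as the paper's: establish the orthogonal decomposition $M \perp M'$ via Hecke adjunction and the self-duality $\Pi' \simeq \check{\Pi}'$ forced by the trivial central character, then combine this with the self-duality of $\tilde{H}^3_!(S_{K_N}, V_{\lambda, \Z_{(p)}})$ from Corollary~\ref{nd-inv-bilinear}.

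Two minor differences are worth noting. First, the paper obtains orthogonality by a slightly cleaner idempotent argument: it takes the Hecke idempotent $e \in \overline{\mathcal{H}}^{K_N}_{\Q}$ projecting onto $M$, observes that $e^*$ projects onto the $\check{\Pi}_f$-isotypical component, and uses $\Pi_f \simeq \check{\Pi}_f$ (only for the fixed $\Pi$) to conclude $e^* = e$, whence $\langle ev, (1-e)w\rangle = \langle v, e(1-e)w\rangle = 0$. This avoids having to invoke $\Pi' \simeq \check{\Pi}'$ for every other contributor and sidesteps the somewhat delicate verification you flag concerning the compatibility of $T_g^*$ with $T_{\nu(g)g^{-1}}$ on each isotypical piece. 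Second, for the final lattice identity the paper simply cites \cite[(4.6)]{hida}, whose hypotheses (self-dual lattice, orthogonal rational splitting) are exactly what has been verified; you instead reprove that statement directly via the surjection $H \twoheadrightarrow L^*$ and the identification of its kernel. Both routes are equivalent, and your direct argument is a perfectly good substitute for the citation.
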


\begin{proof} We would like to apply \cite[(4.6)]{hida} so we verify the assumptions of this result. According to Corollary \ref{nd-inv-bilinear}, the $\Z_{(p)}$-lattice $L=\tilde{H}^3_!(S_{K_N}, {V}_{\lambda, \Z_{(p)}})$ of the $\Q$-vector space $V=\tilde{H}^3_!(S_{K_N}, {V}_{\lambda, \Q})$ is self-dual for the bilinear form deduced from \eqref{inv-bilinear} by base change from $\Z_{(p)}$ to $\Q$, that we also denote by $\langle\,,\,\rangle_{\Q}$. Let $W_1$ denote the subspace $W_1=M(\Pi_f, V_{\lambda, \Q})$ of $V$ and let $W_2$ denote the kernel of the projection $p_M: V \rightarrow W_1$ \eqref{projection}. Then $V=W_1 \oplus W_2$. By the proof of Lemma \ref{order}, there exists $e \in \overline{\mathcal{H}}^{K_N}_{\Q}$ an idempotent such that $W_1=eV$. Then $W_2 = (1-e)V$. We claim that $W_1$ is orthogonal to $W_2$ for $\langle \,,\,\rangle$. In order to prove this, it is enough to prove that $W_{1} \otimes_{\Q} \C$ and $W_2 \otimes_{\Q} \C$ are orthogonal for $\langle \,,\,\rangle_{\C}$. For any $v,w \in V \otimes_{\Q} \C$, we have
$
\langle ev, (1-e)w \rangle_{\C} = \langle v, e^*(1-e)w \rangle_{\C}=0
$
where the first equality follows from Proposition \ref{adjoint-hecke} and Proposition \ref{pp}. As $e$ is the projector on the $\Pi_f$ isotypical component, $e^*$ is the projector on the $\check{\Pi}_f$ isotypical component, where $\check{\Pi}_f$ denoted the contragredient of $\Pi_f$. But as $\Pi_f$ has trivial central character, we have $\Pi_f \simeq \check{\Pi}_f$ and so $e^*=e$. As a consequence $
\langle ev, (1-e)w \rangle_{\C}=0$. By definition, we have $L(\Pi_f, V_{\lambda, \Z_{(p)}})=L\cap W_1$ and $M(\Pi_f, V_{\lambda, \Z_{(p)}})$ is the projection of $L$ on $W_1$ along $W_2$. Hence the statement of the lemma follows from \cite{hida} (4.6). 
\end{proof}

\begin{proof}[Proof of Theorem \ref{main}] Recall that, for any $\Z_{(p)}$-algebra $R$, we denote by $\overline{\mathcal{H}}^{K_N}_R$ the algebra $\overline{\mathcal{H}}^{K_N}_{\Z_{(p)}} \otimes_{\Z_{(p)}} R$ where $\overline{\mathcal{H}}^{K_N}_{\Z_{(p)}}$ denotes the image of the abstract Hecke algebra $\mathcal{H}^{K_N}_{\Z_{(p)}}$ with $\Z_{(p)}$-coefficients in $\mrm{End}_{\Z_{(p)}}(\tilde{H}^3_!(S_{K_N}, {V}_{\lambda, \Z_{(p)}}))$.  Let $Y$ denote the orthogonal complement of $L(\Pi_f, V_{\lambda, \R})$ in $\tilde{H}^3_!(S_{K_N}, {V}_{\lambda, \R})$. As $L(\Pi_f, V_{\lambda, \R})$ is stable by the action of $\overline{\mathcal{H}}^{K_N}_{\R}$, it follows from Proposition \ref{adjoint-hecke} that $Y$ is also stable by $\overline{\mathcal{H}}^{K_N}_{\R}$ so that the decomposition
$
L(\Pi_f, V_{\lambda, \R}) \oplus Y= \tilde{H}^3_!(S_{K_N}, {V}_{\lambda, \R})
$
holds as $\overline{\mathcal{H}}^{K_N}_{\R}$-modules. Define the finitely generated $\Z_{(p)}$-module $M_Y$ by the equation  $M_Y=p_Y(\tilde{H}^3_!(S_{K_N}, V_{\lambda, \Z_{(p)}}))$ where  $p_Y: \tilde{H}^3_!(S_{K_N}, V_{\lambda, \R}) \rightarrow Y$ is the canonical projection. The finitely generated $\Z_{(p)}$-module $L_Y$ is defined as $L_Y=Y \cap \tilde{H}^3_!(S_{K_N}, V_{\lambda, \Z_{(p)}})$. These $\Z_{(p)}$-modules are stable by the action of $\overline{\mathcal{H}}^{K_N}_{\Z_{(p)}}$ on $\tilde{H}^3_!(S_{K_N}, V_{\lambda, \Z_{(p)}})$ and we have $L_Y \subset M_Y$. According to Theorem \ref{main1} the prime $p$ divides $d(\Pi_f)$. It follows from \cite[Proposition 4.3]{hida} that $|d(\Pi_f)|=[L(\Pi_f, V_{\lambda, \Z_{(p)}})^*: L(\Pi_f, V_{\lambda, \Z_{(p)}})]$. Hence, it follows from Lemma \ref{lem-hida} that $p$ divides the index $[M(\Pi_f, V_{\lambda, \Z_{(p)}}): L(\Pi_f, V_{\lambda, \Z_{(p)}})]$. By replacing in the proof of \cite[Theorem 7.1]{hida} the symbols $L, L_f, M_f$ and $R$ by the symbols $\tilde{H}^3_!(S_{K_N}, V_{\lambda, \Z_{(p)}}), L(\Pi_f, V_{\lambda, \Z_{(p)}}), M(\Pi_f, V_{\lambda, \Z_{(p)}})$ and $\overline{\mathcal{H}}^{N}_{\mathrm{sph}, \Z_{(p)}}$ respectively, we obtain a prime divisor $\mathfrak{P}$ of $p$ in $\overline{\Q}$ and a $\overline{\mathcal{H}}^{K_N}_{\mathrm{sph}, \Z_{(p)}}$-module contributing to $Y$ which is congruent to $\Pi$ modulo $\mathfrak{P}$. This amounts to the existence of a cuspidal automorphic representation $\Pi' \simeq \bigotimes'_v \Pi'_v$ such that $(\Pi'_f)^{K_N}$ is non zero, such that $\Pi'_\infty \in P(V_{\lambda, \C})$, such that $\Pi' \not\simeq \,^\sigma\!\,\Pi$ for all $\sigma \in \mrm{Aut}(\C)$ and such that $\Pi' \equiv \Pi \pmod{\mathfrak{P}}$. 
\par 
In the rest of the proof, we show that $\Pi'$ is stable under the conditions \rm{(a)} and \rm{(b)}. In order to show this with contradiction,  
let us assume that $\Pi'$ is not stable. According to the classification of the discrete spectrum of $\mathrm{GSp}(4)$ conjectured in \cite{arthur}, proved in \cite{gee-taibi}, this means that $\Pi'$ is either of Yoshida type, Saito-Kurokawa type, Howe Piatetski-Shapiro type or one-dimensional type. The mod $\mathfrak{P}$ semi-simple Galois representation $\overline{\rho}_{\Pi}$ attached to $\Pi$ is given by $\overline{\rho}_{\Pi} \simeq \overline{\rho}_{f_1} \oplus \overline{\rho}_{f_2}(-1-k')$. Hence the last three types mentionned above are excluded by our condition \rm{(a)}. Now, let us  assume that 
$\Pi'$ is of Yoshida-type. By \cite[Corollary 4.2]{weissauer2}, there exists a pair $(\sigma'_1, \sigma'_2)$  of cuspidal representations of $\mrm{GL}(2, \A)$
which correspond to normalized eigen elliptic cuspforms 
$f'_1$ and $f'_2$ of weight $k'_1 = k+k' +4$ and $k'_2 = k-k' +2$ respectively such that $\overline{\rho}_{\Pi'} \simeq \overline{\rho}_{f_1'} \oplus \overline{\rho}_{f_2'}(-1-k')$. Since we have the congruence $\Pi' \equiv \Pi \pmod{\mathfrak{P}}$, we have 
\begin{equation}\label{equation:isogaloisPi}
\overline{\rho}_\Pi \simeq \overline{\rho}_{\Pi'}.
\end{equation}
As
\begin{equation}\label{equation:isogaloisrho}
\overline{\rho}_\Pi \simeq \overline{\rho}_{f_1} \oplus \overline{\rho}_{f_2} (-1-k'  ), \ \ \ 
\overline{\rho}_{\Pi'} \simeq \overline{\rho}_{f'_1} \oplus \overline{\rho}_{f'_2} (-1-k'  ). 
\end{equation}  
%by the assumption that 
%$\overline{\rho}_{f_1}$ and $\overline{\rho}_{f_2}$ are irreducible, 
%we have 
%\begin{equation}\label{equation:isogaloisrho}
%\overline{\rho}_\Pi \simeq \overline{\rho}_{f_1} \oplus \overline{\rho}_{f_2} (-1-k'  ), \ \ \ 
%\overline{\rho}_{\Pi'} \simeq \overline{\rho}_{f'_1} \oplus \overline{\rho}_{f'_2} (-1-k'  ). 
%\end{equation}  
%By the same reason, we have \begin{equation}\label{equation:restrictiontoineartia2}
%\left. \overline{\rho}_{f'_2}\right\vert_{I_p} \simeq 1 \oplus 1(1-k_2).
%\end{equation}   
As $\overline{\rho}_{f_1}$ and $\overline{\rho}_{f_2}$ are irreducible, the equation \eqref{equation:isogaloisPi} implies either $\overline{\rho}_{f_1} \simeq \overline{\rho}_{f'_1}$ or $\overline{\rho}_{f_1} \simeq \overline{\rho}_{f'_2} (-1-k'  )$. 
We will prove that these equalities never happen. For $i=1,2$ let $N_i$ be the conductor of $f_i$ and let $N'_i$ be the conductor of $f'_i$. We have $N_1 N_2=N$, in particular $(N_1, N_2)=1$ and $N_1$, $N_2$ are square-free. Moreover, as $\Pi'$ has level $N$, we have $N'_1 N'_2 |N_1N_2$.
\par 
We will prove that the first case $\overline{\rho}_{f_1} \simeq \overline{\rho}_{f'_1}$
is excluded by contradiction assuming that $\overline{\rho}_{f_1} \simeq \overline{\rho}_{f'_1}$ holds. When the conductor $N'_1$ of $f'_1$ divides the conductor $N_1$ of $f_1$, this contradicts to the assumption that 
$\mathfrak{p}$ does note divide $c'(f_1)$ because the existence of such $f'_1$ by assumption, must imply that a prime $\mathfrak{p}$ above $p$ 
in $\overline{\mathbb{Q}}$ divides $c' (f_1)$ according to Theorem \ref{theorem:Hida2}. 
When $N_1$ divides $N'_1$, the conductor $N'_2 $ of $f'_2$ must divide the conductor $N_2$ of $f_2$ because we have $N'_1N'_2 | N_1N_2$.
Hence a similar argument as above shows that this contradicts to the assumption that no prime $\mathfrak{p}$ above $p$ in $\overline{\mathbb{Q}}$ divides $c'(f_2)$. 
Finally suppose that none of $N_1 $ nor $N'_1$ divides the other. Then the equality $\overline{\rho}_{f_1} \simeq \overline{\rho}_{f'_1}$  
implies that the analytic conductor of the mod $p$ Galois representation $\overline{\rho}_{f_1}$ is a strict divisor of $N_1$. 
Hence the Serre conjecture proved by Khare-Wintenberger \cite{khare-wintenberger1}, \cite{khare-wintenberger2} implies that 
there must exist a normalized eigen cuspform $f'$ congruent to $f_1$ modulo $p$ whose conductor is a strict divisor of $N_1$. Because $p \notin S_{\mrm{weight}}$  the modular form $f'$ has the same weight as $f_1$. This is again a contradiction to the assumption that no prime $\mathfrak{p}$ above $p$ in $\overline{\mathbb{Q}}$ divides $c'(f_1)$.\\
In order to exclude the second case $\overline{\rho}_{f_1} \simeq \overline{\rho}_{f'_2} (-1-k'  )$, 
let us recall about the mod $p$ Galois representation $\overline{\rho}_f$ 
of an elliptic cusp form $f$ of weight $k<p-2$ and of level $N$ prime to $p$. 
When $f$ is ordinary at $p$, $\overline{\rho}_f$ is reducible when restricted to the decomposition group $D_p$ at $p$ and 
the restriction to the inertia subgroup is given as 
\begin{equation}\label{equation:restrictiontoineartia}
\left. \overline{\rho}_{f}\right\vert_{I_p} \simeq 1 \oplus \omega^{1-k}. 
\end{equation} 
When $f$ is non ordinary at $p$, the restriction to the inertia subgroup of $\overline{\rho}_{f}$ is given as 
\begin{equation}\label{equation:restrictiontoineartia2}
\left. \overline{\rho}_{f}\right\vert_{I_p} \simeq \omega^{k-1}_2 \oplus ({\omega'_2})^{k-1}, 
\end{equation} 
where $\omega_2 $ and $\omega'_2$ are fundamental characters of level $2$ such that $\omega_2$ and $\omega'_2$ are both of order $p^2-1$ and 
we have $\omega'_2 = (\omega_2 )^p$. Suppose that $f_1$ and $f_2'$ are both ordinary. 
The fact that $p \notin S_{\mathrm{weight}}$ implies that $\overline{\rho}_{f'_2} (-1-k'  )$ does not contain the trivial character and hence the second case $\overline{\rho}_{f_1} \simeq \overline{\rho}_{f'_2} (-1-k'  )$ is excluded. When one of $f_1$ and $f_2'$ is ordinary and 
the other is non ordinary, the seconde case $\overline{\rho}_{f_1} \simeq \overline{\rho}_{f'_2} (-1-k'  )$ is excluded again 
since it is not difficult to see that a representation of the type \eqref{equation:restrictiontoineartia} is never isomorphic to a twist of a representation of the type \eqref{equation:restrictiontoineartia2}. 
This is a contradiction. Finally when $f_1$ and $f_2'$ are both non ordinary, 
the second case $\overline{\rho}_{f_1} \simeq \overline{\rho}_{f'_2} (-1-k'  )$ is excluded
by the assumption $p \notin S_{\mathrm{weight}}$. 
\par 
By the above discussion $\Pi'$ is either stable.
\end{proof}

%\begin{rems} \label{ghaterem} For $i=1,2$, let $K_i$ denote the number field of the statement of \cite[Theorem 1]{ghate} for $f_i$. Let $K$ denote the composite of $K_1$ and $K_2$ and fix $\mathfrak{p}$ a prime above $p$ in $K$. Let $c'(f_1)$ and $c'(f_2)$ be the congruence numbers defined in \cite[Theorem 1]{ghate} for $f_1$ and $f_2$. Let (b)' be the following condition: the prime ideal $\mathfrak{p}$ does not divide $c'(f_1)$ nor $c'(f_2)$. Then it is straightforward to check that if one replaces the condition  (b) in Theorem \ref{main} by the condition (b)' then the case where $\Pi'$ is a Weil lifting of $(\,^\sigma\!f_1, \,^\sigma\! f_2)$ where $\sigma, \sigma' \in \mrm{Aut}(\C)$ does not occur and so $\Pi'$ is stable.
%\end{rems}

\end{document}